\title{Well-posedness, magnetic helicity conservation, inviscid limit and asymptotic stability for the generalized Navier-Stokes-Maxwell equations with the Hall effect}
\author{Kyungkeun Kang\thanks{
		Department of Mathematics,
		Yonsei University, 03722 Seoul, Republic of Korea. E-mail address: \url{kkang@yonsei.ac.kr}}
	\and
	Jihoon Lee\thanks{
		Department of Mathematics, 
		Chung-Ang University, 
		06974 Seoul, Republic of Korea. E-mail address: \url{jhleepde@cau.ac.kr}}
	\and
	Dinh Duong Nguyen\thanks{
		Department of Mathematics,
		Yonsei University, 03722 Seoul, Republic of Korea
		and Department of Mathematics, 
		Chung-Ang University, 
		06974 Seoul, Republic of Korea. E-mail address:  \url{dinhduongnguyen.math@gmail.com}
	}
}
\pgfplotsset{compat=newest}
\numberwithin{equation}{section}
\DeclareMathOperator*{\esssup}{ess\,sup}
\newtheorem{theorem}{Theorem}[section]
\newtheorem{lemma}{Lemma}[section]
\theoremstyle{definition}
\newtheorem{remark}{Remark}[section]
\newenvironment{AMS}{}{}
\newenvironment{keywords}{}{}
\begin{document}
	\maketitle
	
	\begin{abstract}
		This paper is devoted to studying the well-posedness, (conditional) conservation of magnetic helicity,
		inviscid limit and asymptotic stability of the generalized Navier-Stokes-Maxwell equations (NSM) under the Hall effect in two and three dimensions. More precisely, in the viscous case we prove the global well-posedness of NSM for small initial data, which allows us to establish a connection with either the Hall-magnetohydrodynamics (H-MHD) system as the speed of light tends to infinity or NSM without the Hall coefficient as this constant goes to zero. In addition, in the inviscid case the local well-posedness of NSM is also obtained for possibly large initial data. Moreover, under suitable conditions on the initial data and additional assumptions of solutions to NSM in three dimensions, the magnetic helicity is conserved as the electric conductivity goes to infinity. It is different to the case of the fractional H-MHD with critical fractional Laplacian exponents for both the velocity and magnetic fields, where the conservation of magnetic helicity can be provided for smooth initial data without any further conditions on the solution. 
		Furthermore, the asymptotic stability of NSM around a constant magnetic field is established in the case of having a velocity damping term.
	\end{abstract}
	
	\begin{keywords}
		\textbf{Keywords:} 
		Navier-Stokes-Maxwell, Hall-magnetohydrodynamics, well-posedness, magnetic helicity conservation, 
		inviscid limit, 
		asymptotic stability. 
	\end{keywords}
	
	\begin{AMS}
		\textbf{Mathematics Subject Classification:} 35Q35, 35Q60, 76D03, 76W05, 78A25.
	\end{AMS}
	
	\allowdisplaybreaks 
	
	%
	\section{Introduction}
	%
	
	%
	\subsection{The systems}
	%
	
	Let us consider the following generalized incompressible Navier-Stokes-Maxwell equations\footnote{Here, the usual fractional Laplacian operator is defined in terms of Fourier transform, i.e., for $\alpha \in \mathbb{R}$
		\begin{equation*}
			\mathcal{F}((-\Delta)^\alpha(f))(\xi) := |\xi|^{2\alpha}\mathcal{F}(f)(\xi) \qquad \text{where}\qquad \mathcal{F}(f)(\xi) := \int_{\mathbb{R}^d} \exp\{-i\xi\cdot x\} f(x) \,dx \quad \text{for } \xi \in \mathbb{R}^d.
		\end{equation*}
		In the case of $\alpha = 0$, we use the standard convention that $(-\Delta)^0$ is the identity operator.}
	\begin{equation} \label{NSM} \tag{NSM}
		\left\{
		\begin{aligned}
			\partial_t v + v \cdot \nabla v + \nabla \pi &= -\nu(-\Delta)^\alpha v + j \times B, 
			\\
			\frac{1}{c}\partial_t E - \nabla \times B &= -j,
			\\
			\frac{1}{c}\partial_t B + \nabla \times E &= 0,
			\\
			\sigma(cE + v \times B) &= j,
			\\
			\textnormal{div}\, v = \textnormal{div}\, B &= 0,
		\end{aligned}
		\right.
	\end{equation}
	where $\alpha \geq 0$, for $d \in \{2,3\}$, $(v,E,B,j) : \mathbb{R}^d \times (0,\infty) \to \mathbb{R}^3$ and $\pi : \mathbb{R}^d \times (0,\infty) \to \mathbb{R}$ are the velocity, electric, magnetic and electric current fields,  and scalar pressure of the fluid, respectively. The positive constants $\nu,\sigma$ and $c$ denote in order the viscosity, electric conductivity and  speed of light. We will denote the initial data by $(v,E,B)_{|_{t=0}} = (v_0,E_0,B_0)$. We note that the case $d = 2$ is also known as the $2\frac{1}{2}$-dimensional version. Let us also quickly recall the standard meaning of the above system. In \eqref{NSM}, through the Lorentz force $j \times B$ (under quasi-neutrality assumptions) and the electric current field $j$ the Navier-Stokes equations (the first line) are coupled to the Maxwell system, where the latter one consists of the Ampere's equations + Maxwell's correction (the second line) and the Faraday's law (the third line). In addition, the fourth line is the usual Ohm's law and the last one stands for the incompressiblity of the velocity and magnetic fields. It can be seen that if the term $\frac{1}{c}\partial_t E$ is neglected formally for either large $c$ or time-independent $E$, then \eqref{NSM} reduces to the usual $2\frac{1}{2}$-dimensional fractional magnetohydrodynamics (MHD) equations, i.e., \eqref{HMHD} with $\kappa = 0$ and $\beta = 1$ (for more physical introduction to the magnetohydrodynamics, see \cite{Biskamp_1993,Davidson_2017}). Therefore,  \eqref{NSM} with $\alpha = 1$ is also known as the full MHD system.
	
	In fact, by ignoring thermal effects, \eqref{NSM} with $\alpha = 1$ can be derived from kinetic equations (see \cite{Jang-Masmoudi_2012}). By considering a solenoidal Ohm’s law\footnote{In this case, $j =  \sigma(-\nabla \bar{\pi} + cE + v \times B)$ with $\textnormal{div}\, j = \textnormal{div}\, E = 0$ and for some additional electromagnetic pressure $\bar{\pi}$, see \eqref{NSM-SO}.} instead, it also can be formally obtained as a limiting system of a two-fluid incompressible Navier–Stokes–Maxwell system by taking the momentum transfer coefficient $\epsilon > 0$ tends to zero (see \cite{Arsenio-Ibrahim-Masmoudi_2015}). More precisely, if $v^+$ and $v^-$ denote the cations and anions velocities, respectively, with the same viscosity $\mu > 0$ and the corresponding thermal pressures $\pi^+$ and $\pi^-$, then the scaled two-fluid incompressible Navier–Stokes–Maxwell equations were proposed in \cite{Giga-Yoshida_1984} and will be written in the following form\footnote{In fact, the authors in \cite{Giga-Yoshida_1984} suggested a more general model with different coefficients appearing in the equations.} (we use the same notation for the electric and magnetic fields as previously)
	\begin{equation*} \tag{2-NSM} \label{2NSM}
		\left\{
		\begin{aligned}
			\partial_t v^+ + v^+ \cdot \nabla v^+ +  \frac{1}{2\sigma\epsilon^2}(v^+-v^-) &= \mu \Delta v^+ - \nabla \pi^+ + \frac{1}{\epsilon}(cE + v^+ \times B),
			\\
			\partial_t v^- + v^- \cdot \nabla v^- - \frac{1}{2\sigma\epsilon^2}(v^+-v^-) &= \mu \Delta v^- - \nabla \pi^- - \frac{1}{\epsilon}(cE + v^- \times B),
			\\
			\frac{1}{c}\partial_t E - \nabla \times B &= -\frac{1}{2\epsilon}(v^+-v^-),
			\\
			\frac{1}{c}\partial_t B + \nabla \times E &= 0,
			\\
			\textnormal{div}\, v^+ = \textnormal{div}\, v^- = \textnormal{div}\, E &= \textnormal{div}\, B = 0,
		\end{aligned}
		\right.
	\end{equation*}
	which models the motion of a plasma of positively (cations) and negatively (anions) charged
	particles under the assumption of equal masses. In the above system, the condition $\textnormal{div}\, E = 0$, which is known as a degenerate Gauss’s law (see \cite{Arsenio-Ibrahim-Masmoudi_2015}) and follows from the charge neutrality and the incompressibility of the plasma (see \cite{Giga-Yoshida_1984}), and the third term on the right-hand side of the second equation presents the momentum transfer between the two fluids. The existence and uniqueness of global energy solutions to \eqref{2NSM} (for more general coefficients) have recently obtained in \cite{Giga-Ibrahim-Shen-Yoneda_2018} in two dimensions. In the three-dimensional case, they also showed the existence of global energy solutions and local well-posedness (LWP) of \eqref{2NSM} for initial data $(v^{\pm}_0,E_0,B_0) \in H^\frac{1}{2} \times L^2 \times L^2$, and this local solution can be globally extended for small $v^{\pm}_0$ in the $\dot{H}^\frac{1}{2}$ norm. It can be seen that the energy equality to \eqref{2NSM} formally reads\footnote{Here, $\|(f_1,...,f_n)\|^m_X := \sum^n_{i=1}\|f_i\|^m_X$ for $n,m \in \mathbb{N}$ and some functional space $X$.}
	\begin{equation*}
		\frac{1}{2} \frac{d}{dt}\left\|\left(\frac{v^{\pm}}{\sqrt{2}},E,B\right)\right\|^2_{L^2} + \mu\left\|\frac{1}{\sqrt{2}} \nabla v^{\pm}\right\|^2_{L^2} + \frac{1}{\sigma}\left\|\frac{1}{2\epsilon}(v^+-v^-)\right\|^2_{L^2} = 0,
	\end{equation*}
	which suggests us to consider a system, which is satisfied by the following quantities
	\begin{equation*}
		k := \frac{1}{2\epsilon}(v^+-v^-),\quad u := \frac{1}{2}(v^++v^-),\quad p_1 := \frac{1}{2}(\pi^++\pi^-) \quad \text{and} \quad  p_2 := \frac{\epsilon}{2}(\pi^+-\pi^-)
	\end{equation*}
	and is given by rewriting \eqref{2NSM} as follows
	\begin{equation*} 
		\left\{
		\begin{aligned}
			\partial_t u + u \cdot \nabla u + \epsilon^2 k \cdot \nabla k  &= \mu\Delta u - \nabla p_1 + k \times B, 
			\\
			\epsilon^2(\partial_t k + u \cdot \nabla k + k \cdot \nabla u) + \frac{1}{\sigma} k &= \epsilon^2 \mu \Delta k - \nabla p_2 + cE + u \times B, 
			\\	
			\frac{1}{c}\partial_t E - \nabla \times B &= -k,
			\\
			\frac{1}{c}\partial_t B + \nabla \times E &= 0,
			\\
			\textnormal{div}\, u = \textnormal{div}\, k = \textnormal{div}\, E &= \textnormal{div}\, B = 0.
		\end{aligned}
		\right.
	\end{equation*}
	As $\epsilon \to 0$, the above system formally converges to \eqref{NSM} with $\alpha = 1$ and a solenoidal Ohm’s law instead of the usual one (see \cite{Arsenio-Ibrahim-Masmoudi_2015,Arsenio-SaintRaymond_2016}), i.e., the following system (for simplicity, we will replace $u,k,p_1,p_2$ and $\mu$ by $v,j,\pi_1,\pi_2$ and $\nu$, respectively), which will be written in a more general form as follows
	\begin{equation*} \label{NSM-SO} \tag{NSM-SO}
		\left\{
		\begin{aligned}
			\partial_t v + v \cdot \nabla v + \nabla \pi_1 &= -\nu(-\Delta)^\alpha v + j \times B, 
			\\	
			\frac{1}{c}\partial_t E - \nabla \times B &= -j,
			\\
			\frac{1}{c}\partial_t B + \nabla \times E &= 0,
			\\
			\sigma(-\nabla \pi_2 + cE + v \times B) &= j, 
			\\
			\textnormal{div}\, v = \textnormal{div}\, j = \textnormal{div}\, E &= \textnormal{div}\, B = 0,
		\end{aligned}
		\right.
	\end{equation*}
	that shares a similar structure and mathematical difficulties to those of \eqref{NSM}. In fact, we will list known results to \eqref{NSM} and it is possible to obtain similar ones to \eqref{NSM-SO}. The rigorous proof of the convergence from \eqref{2NSM} to \eqref{NSM-SO} as $\epsilon \to 0$ does not seem to be known for $L^2$ initial data. In \cite{Arsenio-Ibrahim-Masmoudi_2015}, the authors established the limit as first $c \to \infty$ and then $\epsilon \to 0$, where \eqref{2NSM} converges weakly to the standard $2\frac{1}{2}$-dimensional MHD system, i.e., \eqref{HMHD} with $\alpha = 1$ and $\kappa = 0$. They also pointed out that the same result also holds in the case $c \to \infty$ and $\epsilon \to 0$ at the same time, but with additional conditions on the relation between $\epsilon$ and $c$ in which $\epsilon$ is considered as a function of  $c$ satisfying further assumptions. However, the other order of taking limit has not been confirmed yet, i.e., the limit as $\epsilon \to 0$ first and then $c \to \infty$, where the limiting system is the same as the previous case. In addition, it is also very interested and much more complicated to consider  either \eqref{NSM} or \eqref{NSM-SO} with a generalized Ohm's law, which can be derived from either the two-fluid Navier-Stokes-Maxwell equations or kinetic models with different masses (see \cite{Acheritogaray-Degond-Frouvelle-Liu_2011,Jang-Masmoudi_2012,Peng-Wang-Xu_2022}) for $\alpha = 1$, in particular, the new system takes a more general form as follows 
	\begin{equation*} \label{NSM-GO} \tag{NSM-GO}
		\left\{
		\begin{aligned}
			\partial_t v + v \cdot \nabla v + \nabla \pi_1 &= -\nu(-\Delta)^\alpha v + j \times B, 
			\\	
			\frac{1}{c}\partial_t E - \nabla \times B &= -j,
			\\
			\frac{1}{c}\partial_t B + \nabla \times E &= 0,
			\\
			\sigma(-\nabla \pi_2 + cE + v \times B)  &= j + \kappa j \times B, 
			\\
			\textnormal{div}\, v = \textnormal{div}\, j = \textnormal{div}\, E &= \textnormal{div}\, B = 0,
		\end{aligned}
		\right.
	\end{equation*}
	which takes into account of the Hall effect for some nonnegative constant $\kappa$. This new constant is corresponding to the magnitude of the Hall effect
	compared to the typical fluid length scale. Furthermore, by taking the limit as $c \to \infty$ formally or ignoring the term $\frac{1}{c}\partial_t E$ (for example, $E$ is time-independent), \eqref{NSM-GO} reduces to the standard fractional Hall-magnetohydrodynamics equations with $\beta = 1$, i.e., in general for $\beta \geq 0$
	\begin{equation} \label{HMHD} \tag{H-MHD}
		\left\{
		\begin{aligned}
			\partial_t v + v \cdot \nabla v + \nabla \pi &= -\nu(-\Delta)^\alpha v + (\nabla \times B) \times B, 
			\\
			\partial_t B - \nabla \times (v \times B) &= -\frac{1}{\sigma}(-\Delta)^\beta B - \frac{\kappa}{\sigma} \nabla \times ((\nabla \times B) \times B),
			\\
			\textnormal{div}\, v = \textnormal{div}\, B &= 0.
		\end{aligned}
		\right.
	\end{equation}
	Indeed, the Hall term in \eqref{HMHD} plays an important role in magnetic reconnection, which can not be explained by using \eqref{MHD} only, and is derived from either two-fluid models or kinetic equations in \cite{Acheritogaray-Degond-Frouvelle-Liu_2011}. The systematical study of the above equations was initiated in \cite{Lighthill_1960} long time ago. However, even in the case that $d = 2$ (also known as the $2\frac{1}{2}$-dimensional version) and $\alpha = \beta = 1$, the global regularity issue of \eqref{HMHD} has not been fully established for general initial data. In this case, the existence of global energy solutions has been provided in \cite{Chae-Degond-Liu_2014} in both two and three dimensions, but it is not the case for \eqref{NSM}, \eqref{NSM-SO} and \eqref{NSM-GO} as mentioned before. In addition, by using the convex integration framework, the author in \cite{Dai_2021} proved the nonuniqueness of weak solutions in the Leray-Hopf class for $d = 3$. In the case of without the resistivity, singularity formation and ill-posedness results were also obtained for instance in \cite{Chae-Weng_2016} and \cite{Jeong-Oh_2022, Jeong-Oh_2024}, respectively. Furthermore, global small initial data solutions in both cases $d = 2$ and $d = 3$ have been provided in \cite{Bae-Kang_2023,Chae-Degond-Liu_2014,Chae-Lee_2014,Chang-Lkhagvasuren-Yang_2023,Danchin-Tan_2021,Danchin-Tan_2022,Liu-Tan_2021,Tan_2022,Wan_2015,Wan-Zhou_2015,Wan-Zhou_2019,Wan-Zhou_20191,Ye_2022}. Especially, global smooth solutions for a specific class of large initial data have been found in \cite{Zhang_2019}. In the critical fractional exponent case, where either $(\alpha,\beta) = (1,\frac{3}{2})$ in two dimensions (see \cite[Proposition 1.1]{KLN_2024_2}) or $(\alpha,\beta) = (\frac{5}{4},\frac{7}{4})$ in three dimensions (see \cite{Wan_2015}), the global well-posedness (GWP) of \eqref{HMHD} can be obtained. See also the LWP of \eqref{HMHD} for possibly large initial data in \cite{Dai_2020,Dai_2021_2}. Further properties of weak solutions are also investigated in \cite{Dumas-Sueur_2014}. In the stationary case, regularity and partial regularity of  weak solutions can be found in \cite{Chae-Wolf_2017} and in \cite{Chae-Wolf_2015} in the two and three-dimensional cases, respectively. As mentioned previously, \eqref{HMHD} can be obtained formally from \eqref{NSM-GO}.  Therefore, it is reasonable to consider conditional GWP results for \eqref{NSM-GO}, for instance, under smallness assumptions of initial data. 
	
	Finally, it is also convenient to write down the standard fractional MHD system as follows
	\begin{equation} \label{MHD} \tag{MHD}
		\left\{
		\begin{aligned}
			\partial_t v + v \cdot \nabla v + \nabla \pi &= -\nu (-\Delta)^\alpha v + B \cdot \nabla B, 
			\\
			\partial_t B + v \cdot \nabla B &= -\frac{1}{\sigma} (-\Delta)^\beta B + B \cdot \nabla v,
			\\
			\textnormal{div}\, v = \textnormal{div}\, B &= 0,
		\end{aligned}
		\right.
	\end{equation}
	where $(v,B) : \mathbb{R}^d \times (0,\infty) \to \mathbb{R}^d$ and $\pi : \mathbb{R}^d \times (0,\infty) \to \mathbb{R}$ for $d \in \{2,3\}$. In three dimensions, it is well-known that by using some vector identities,  \eqref{HMHD} with $\kappa = 0$ and \eqref{MHD} are formally equivalent to each other up to a modified pressure.
	
	%
	\subsection{The state of the art}
	%
	
	\textbf{A. The case of \eqref{NSM} with $d = 2$.} Let us give a quick review on the study of \eqref{NSM} in two dimensions with $\alpha = 1$. Formally, its energy balance is given by (the same for \eqref{NSM-SO} and \eqref{NSM-GO} in both cases $d = 2$ and $d = 3$)
	\begin{equation*}
		\frac{1}{2} \frac{d}{dt}\|(v,E,B)\|^2_{L^2} + \nu\|\nabla v\|^2_{L^2} + \frac{1}{\sigma}\|j\|^2_{L^2} = 0.
	\end{equation*}
	Thus, similar to in the case of the usual Navier-Stokes equations, we could expect the existence of global energy solutions (see \cite{Leray_1933,Leray_1934}). However, it seems that this energy equality is not enough to obtain the existence of $L^2$ weak solutions, which is different to that of \eqref{2NSM} in the two and three-dimentional cases as mentioned previously. The main difficulty is the lack of compactness, due to the hyperbolicity 
	of the Maxwell equations, which is needed to pass to the limit as $n \to \infty$ of the term $j^n \times B^n$, especially for the one $E^n \times B^n$, where $n$ is the regularization parameter of a standard approximate system to \eqref{NSM}. Therefore, higher regular data should be considered on the GWP issue. The first GWP result to \eqref{NSM} was obtained in \cite{Masmoudi_2010} in the case where
	\begin{equation*}
		(v_0,E_0,B_0) \in L^2 \times H^s \times H^s \quad \text{ for } s \in (0,1).
	\end{equation*}
	In addition, higher regular estimates are also provided in \cite{Masmoudi_2010} in the case where $(v_0,E_0,B_0) \in H^\delta \times H^s \times H^s$ for $\delta \geq 0$, $s \geq 1$, $s - 2 < \delta \leq s$, see also \cite{Kang-Lee_2013} for another proof and \cite{Fan-Ozawa_2020} for the case of bounded domains. The GWP is also obtained in \cite{Ibrahim-Keraani_2012} for small initial  data satisfying\footnote{For the definitions of the spaces $\dot{B}^0_{2,1}$ and $L^2_{\textnormal{log}}$, see \cite{Bahouri_2019,Bahouri-Chemin-Danchin_2011,Ibrahim-Keraani_2012}.} 
	\begin{equation*}
		(v_0,E_0,B_0) \in \dot{B}^0_{2,1} \times L^2_{\textnormal{log}} \times L^2_{\textnormal{log}},
	\end{equation*}
	where we have the following relations $\dot{B}^0_{2,1} \subset L^2$ and $\cup_{s>0} H^s \subset L^2_{\textnormal{log}} \subset L^2$. However, the LWP has not been constructed for the above arbitrary large initial data. After that, the authors in \cite{Germain-Ibrahim-Masmoudi_2014} have been considered mild solutions to \eqref{NSM} and they obtained the LWP 
	for possibly large initial data and the GWP for small initial data under the assumption
	\begin{equation*}
		(v_0,E_0,B_0) \in L^2 \times L^2_{\textnormal{log}} \times L^2_{\textnormal{log}}.
	\end{equation*}
	Here, in the two previous results,  in order to estimate the term $E \times B$ coming from $j \times B$ in some homogeneous Besov spaces, the authors used the paraproduct estimate \cite[Corollary 2.55]{Bahouri-Chemin-Danchin_2011} 
	and it is critical in two dimentions, thus the extra logarithmic regularity of $(E_0,B_0)$ is needed. Recently, the authors in \cite{Arsenio-Gallagher_2020} revisited \eqref{NSM} in the case where $(v_0,E_0,B_0) \in L^2 \times H^s \times H^s$ for $s \in (0,1)$, as considered previously in \cite{Masmoudi_2010}, with providing further improvements, which include some $c$-independent estimates of $(v,E,B)$. That allowed them to investigate the asymptotic behavior as $c \to \infty$ by proving the convergence of solutions to \eqref{NSM} to that of the standard $2\frac{1}{2}$-dimensional MHD equations, i.e., \eqref{HMHD} with $\alpha = \beta = 1$ and $\kappa = 0$, in the sense of distributions. Recently, the authors in \cite{KLN_2024_2} provided the GWP of \eqref{NSM} for a larger class of initial data for $\alpha = 1$ and the stability around a constant magnetic field of \eqref{NSM} for $\alpha = 0$.

	\textbf{B. The case of \eqref{NSM} with $d = 3$.} As mentioned previously, the existence of energy solutions is unknown so far. We will shortly recall some results to \eqref{NSM} in the three-dimensional case with $\alpha = 1$. One of the first results was given in \cite{Ibrahim-Keraani_2012}, where the authors constructed global small solutions with initial data
	\begin{equation*}
		(v_0,E_0,B_0) \in \dot{B}^\frac{1}{2}_{2,1} \times \dot{H}^\frac{1}{2} \times \dot{H}^\frac{1}{2}.
	\end{equation*}
	For large initial data in some $\ell^1$ weighted space in Fourier side, the authors in \cite{Ibrahim-Yoneda_2012} have been provided the local in time existence of mild solutions. Moreover, by using the fact that the damped-wave operator does not have any smoothing
	effect, they also showed these local solutions lost regularity in some finite time. Later on, the above result in \cite{Ibrahim-Keraani_2012} was extended in \cite{Germain-Ibrahim-Masmoudi_2014} in which either local large initial data solutions or global small intial data ones was provided for initial data in the following space 
	\begin{align*}
		(v_0,E_0,B_0) \in \dot{H}^\frac{1}{2} \times \dot{H}^\frac{1}{2} \times \dot{H}^\frac{1}{2}.
	\end{align*}
	Recently,  the existence of weak solutions was built in \cite{Arsenio-Gallagher_2020} for the initial data in the form of
	\begin{equation*}
		(v_0,E_0,B_0) \in L^2 \times H^s \times H^s \qquad \text{for} \quad s \in \left[\frac{1}{2},\frac{3}{2}\right),
	\end{equation*} 
	under the smallness assumption of $(E_0,B_0)$ in the $\dot{H}^s$ norm (the smallness assumption is related to only the $L^2$ norm of $(v_0,E_0,B_0)$ and $\dot{H}^s$ norm of $(E_0,B_0)$). 
	
	We also note that time-periodic small solutions and their asymptotic stability were investiagted in \cite{Ibrahim-Lemarie-Rieusset-Masmoudi_2018}. For further results to \eqref{NSM} (and also to \eqref{NSM-SO}) such as GWP for small data and LWP for possibly large data, loss of regularity, asymptotic behaviors, existence of global weak solutions with small data, global regularity criteria, time periodic solutions and so on, we refer the reader to \cite{Arsenio-Gallagher_2020,Arsenio-Ibrahim-Masmoudi_2015,Germain-Ibrahim-Masmoudi_2014,Ibrahim-Keraani_2012,Ibrahim-Lemarie-Rieusset-Masmoudi_2018,Ibrahim-Yoneda_2012,Jiang-Luo-Tang_2020,Kang-Lee_2013,Peng-Wang-Xu_2022,Wen-Ye_2020,Yue-Zhong_2016}. Finally, the authors in \cite{KLN_2024_2} have recently provided the GWP of \eqref{NSM} in the case of $\alpha = \frac{3}{2}$ and the stability of \eqref{NSM} near a constant magnetic field in the case of $\alpha = 0$.

	\textbf{C. The case of \eqref{NSM-GO} with new issues.} 
	In this part, we quickly focus on some new difficult points of \eqref{NSM-GO} compared to either \eqref{NSM} or \eqref{NSM-SO} (see more details in Section \ref{sec:stra} below). Mathematically, \eqref{NSM} is similar to \eqref{NSM-SO} as mentioned previously, in the sense that all mentioned results of \eqref{NSM} in Subsections 1.2-A and B can also be obtained for \eqref{NSM-SO} with using similar strategies of proof. In addition, \eqref{NSM-GO} reduces formally to \eqref{NSM-SO} in the case of $\kappa = 0$. Therefore, it is more suitable to compare \eqref{NSM-SO} and \eqref{NSM-GO}.
	
	\begin{enumerate}
		\item (\textbf{Compared to \eqref{HMHD}}) As mentioned before, if either $\partial_t E = 0$ or taking $c \to \infty$ and $\alpha = 1$ then formally \eqref{NSM-GO} reduces to \eqref{HMHD} with $\alpha = \beta = 1$. Thus, the system is more complex that \eqref{HMHD} in which the GWP has not been known for arbitrary initial data, even in two dimensions. However, as in the cases of the usual Navier-Stokes and \eqref{MHD} equations in three dimensions, we can hope that this problem can be overcomed by assuming the smallness of initial data. That is the main reason why we will focus on obtaining the GWP of \eqref{NSM-GO} for small initial data. Note furthermore that we do not have the Laplacian of $B$ in \eqref{NSM-GO} and $j$ is not $\nabla \times B$ compared to those of in \eqref{HMHD}. These observations indicate that there might exist non standard difficulties in the mathematical analysis of the system and the proofs in the mentioned papers on \eqref{HMHD} seem can not be used here.  
		
		\item (\textbf{Compared to \eqref{NSM} and \eqref{NSM-SO}}) In addition, the electric current field $j$ in \eqref{NSM-GO}, which is given by a generalized Ohm's law and depends on $(v,E,B,\pi_2)$ in an implicit way. That point is different to those of in the cases of \eqref{NSM} and \eqref{NSM-SO}. It is not clear to us how $j$ can be expressed explicitly by $(v,E,B)$ only.  Indeed, it can be seen from \eqref{NSM-GO} that
		\begin{equation*}
			j = M^{-1} \sigma(-\nabla \pi_2 + cE + v \times B),
		\end{equation*}
		where 
		\begin{equation*}
			M = M(\kappa,B) := 
			\left(
			\begin{matrix}
				1 & \kappa B_3 & -\kappa B_2
				\\
				-\kappa B_3 & 1 & \kappa B_1
				\\
				\kappa B_2 & -\kappa B_1 & 1
			\end{matrix}
			\right)
			\quad \text{with} \quad \textnormal{det}(M) = \kappa^2 |B|^2 + 1.			
		\end{equation*}
		On the right-hand side of the previous form, we still have the appearance of $\pi_2$, which in fact implicitly depends on $j$. On the other hand, by using the Leray projection, it follows that 
		\begin{equation*}
			\mathbb{P}(Mj) = \sigma(cE + \mathbb{P}(v \times B)).
		\end{equation*}
		However, this formula seems not to be useful enough. Therefore, it seems to us that the techniques in \cite{Arsenio-Gallagher_2020,Germain-Ibrahim-Masmoudi_2014,Ibrahim-Keraani_2012,KLN_2024_2,Masmoudi_2010} can not be applied directly here, mainly due to the fact that these papers employed of the explicit form of $j$ in terms of $(v,E,B)$ during the proofs. Furthermore, we also need to find another way to approximate the system before adapting the ideas in \cite{Germain-Ibrahim-Masmoudi_2014,Ibrahim-Keraani_2012} in controlling $v$ in the $L^2_tL^\infty_x$ norm and a new idea in the bound of $(E,B)$ in the $L^2_t\dot{H}^1_x$ norm.
	\end{enumerate}
	
	It seems to us that there are only a few results on \eqref{NSM-GO} in two and three dimensions. First of all, as mentioned previously, the Hall effect has been derived in  \cite{Acheritogaray-Degond-Frouvelle-Liu_2011,Jang-Masmoudi_2012}. In addition, the authors in \cite{Peng-Wang-Xu_2022} investigated the convergence from \eqref{NSM-GO} to \eqref{HMHD} in the case $\alpha = \beta = 1$ under suitable assumptions on the existence of solutions and also on the property of these solutions. Thus, in the next subsection, we will provide the mathematical analysis to \eqref{NSM-GO}. More precisely, we will prove the GWP for small initial data (see Theorem \ref{theo_global_small}) and LWP for large initial data (see Theorem \ref{theo_local}). We also study the (conditional) conservation of magnetic helicity of \eqref{NSM-GO} and \eqref{HMHD} as the electric conductivity $\sigma$ goes to infinity (see Theorems \ref{theo-MH} and \ref{theo-C-HMHD}, respectively), and the stability and large-time behavior to \eqref{NSM-GO} as well (see Theorem \ref{theo_stability}).
	
	%
	\subsection{Main results}
	%
	
	For the reader's convenience, before going to the detailed statements, let us first summarize the main results in the present paper as follows:
	\begin{enumerate}
		\item[1.] The GWP of \eqref{NSM-GO} for $\nu > 0$, $\alpha = 1$ and $d \in \{2,3\}$ in Theorem \ref{theo_global_small};
		
		\item[2.] The LWP for $\nu = 0$ and the inviscid limit of \eqref{NSM-GO} for $d \in \{2,3\}$ in Theorem \ref{theo_local};
				
		\item[3.] The (conditional) conservation of magnetic helicity of \eqref{NSM-GO} with $\alpha = 1$ and \eqref{HMHD} with $\alpha = \frac{5}{4}$, $\beta = \frac{7}{4}$ as $\sigma \to \infty$ for $\nu > 0$ and $d = 3$ in Theorems \ref{theo-MH} and \ref{theo-C-HMHD};
		
		\item[4.] The asymptotic stability near a magnetohydrostatic equilibrium with a constant (or equivalently bounded) magnetic field of \eqref{NSM-GO} for $\alpha = 0$, $\nu > 0, \kappa > 0$ and $d \in \{2,3\}$ in Theorem \ref{theo_stability};
	\end{enumerate}
	which will be precisely presented in the following subsubsections, respectively.
	
	%
	\subsubsection{Global and local well-posedness}
	%
	
	In this part, we aim to provide the GWP to \eqref{NSM-GO} in the case $\alpha = 1$ for small initial data and the LWP to \eqref{NSM-GO} in the inviscid case for possibly large initial data, respectively.
	
	\begin{theorem}[\textnormal{Global well-posedness for small initial data}] \label{theo_global_small}
		Let $\alpha = 1$, $d \in \{2,3\}$, $c \geq 1$, $\kappa,\nu, \sigma > 0$ and  $\Gamma_0 := (v_0,E_0,B_0) \in X^s := (H^{s-1} \times H^s \times H^s)(\mathbb{R}^d)$ with $s \in \mathbb{R}, s > \frac{d}{2}$. The following statements hold.
		\begin{enumerate}
			\item[\textnormal{(}i\textnormal{)}] \textnormal{(Global well-posedness).} There exists a positive constant $C_1 = C_1(d,\kappa,\nu,\sigma,s)$ such that if
			\begin{equation} \label{small_data} \tag{A}
				C_1 \|\Gamma_0\|_{X^s} \leq 1
			\end{equation}
			then \eqref{NSM-GO} admits a unique global solution $\Gamma := (v,E,B)$ with
			\begin{align*}
				v &\in L^\infty(0,\infty;H^{s-1}) \cap L^2(0,\infty;L^\infty), \quad \nabla v \in L^2(0,\infty;H^{s-1}), 
				\\
				E &\in L^\infty(0,\infty;H^s) \cap L^2(0,\infty;\dot{H}^{s''}), \quad s'' \in [0,s],
				\\
				B &\in L^\infty(0,\infty;H^s) \cap L^2(0,\infty;\dot{H}^{s'}), \quad s' \in [1,s],
				\\
				j &\in L^2(0,\infty;H^s),
			\end{align*}
			satisfying for $t > 0$ and for some positive constant $C_2 = C_2(d,\kappa,\nu,\sigma,s)$
			\begin{align*}
				\|\Gamma(t)\|^2_{X^s} + \int^t_0 \|\nabla v\|^2_{H^{s-1}} + \|v\|^2_{L^\infty} 
				+ \|E\|^2_{\dot{H}^{s''}} + \|B\|^2_{\dot{H}^{s'}} 
				+ \|j\|^2_{H^s}  \,d\tau \leq C_2\|\Gamma_0\|^2_{X^s}.
			\end{align*}
			
			\item[\textnormal{(}ii\textnormal{)}] \textnormal{(The limit as $c \to \infty$: A connection with \eqref{HMHD}).} Let $c \geq 1$ and $(v^c_0,E^c_0,B^c_0) \in H^{s-1} \times H^s \times H^s$ with $\textnormal{div}\, v^c_0 = \textnormal{div}\, B^c_0 = \textnormal{div}\, E^c_0 = 0$. Assume that $(v^c_0,E^c_0,B^c_0)$ satisfies \eqref{small_data} uniformly in terms of $c$ and as $c \to \infty$
			\begin{equation*}
				(v^c_0,E^c_0,B^c_0) \rightharpoonup (\bar{v}_0,\bar{E}_0,\bar{B}_0) \qquad \text{in} \quad H^{s-1} \times H^s \times H^s
			\end{equation*}
			for some $(\bar{v}_0,\bar{E}_0,\bar{B}_0)$ with $\textnormal{div}\, \bar{v}_0 = \textnormal{div}\, \bar{B}_0 = \textnormal{div}\, \bar{E}_0 = 0$. Then, there exists a sequence of global solutions $(v^c,E^c,B^c)$ to \eqref{NSM-GO} with $\alpha = 1$ and $(v^c,E^c,B^c)_{|_{t=0}} = (v^c_0,E^c_0,B^c_0)$, which are given as in Part $(i)$. In addition,  up to an extraction of a subsequence, $(v^c,B^c)$ converges to $(v,B)$ in the sense of distributions as $c \to \infty$, where
			$(v,B)$ satisfies \eqref{HMHD} with $\alpha = \beta = 1$ and $(v,B)_{|_{t=0}} = (\bar{v}_0,\bar{B}_0)$.
			
			\item[\textnormal{(}iii\textnormal{)}] \textnormal{(The limit as $\kappa \to 0$: A connection with \eqref{NSM-SO}).} Let $\kappa \in (0,1)$, $\Gamma_0 := (v_0,E_0,B_0)$ satisfying $C'_1 \|\Gamma_0\|_{X^s} \leq 1$ for some positive constant $C'_1 = C'_1(d,\nu,\sigma,s) > C_1$ and $\Gamma^\kappa
			 := (v^\kappa,E^\kappa,B^\kappa)$ be the unique global solution of \eqref{NSM-GO} given as in Part $(i)$ with $\Gamma^\kappa_{|_{t=0}} = \Gamma_0$. Let $\Gamma := (v,E,B)$ be the unique global solution of \eqref{NSM-SO} with $\Gamma^\kappa_{|_{t=0}} = \Gamma_0$ can be obtained as in Part $(i)$ with $\kappa = 0$. Then, for any $T \in (0,\infty)$ and for $t \in (0,T)$, $s' \in [0,s-1)$ and $s'' \in [0,s)$
			\begin{align*}
				\|(v^\kappa-v)(t)\|_{H^{s'}} &\leq \kappa^\frac{s-1-s'}{s-1}C(T,d,\nu,\sigma,s,\Gamma_0),
				\\
				\|(E^\kappa-E,B^\kappa-B)(t)\|_{H^{s''}} &\leq \kappa^\frac{s-s''}{s}C(T,d,\nu,\sigma,s,\Gamma_0).
			\end{align*}
		\end{enumerate}
	\end{theorem}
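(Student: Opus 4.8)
The plan is to carry the three parts in order, with Part $(i)$ bearing the analytic weight and Parts $(ii)$ and $(iii)$ following as limiting arguments built on the $c$- and $\kappa$-uniform estimates it produces. Since Ohm's law fixes $j$ only implicitly through $j = M^{-1}\sigma(-\nabla\pi_2 + cE + v\times B)$, with $M = M(\kappa,B)$ the matrix above (antisymmetric off the diagonal, $\det M = 1+\kappa^2|B|^2 \ge 1$), the explicit-$j$ techniques of the cited \eqref{NSM} literature are unavailable, and the first task is to design a workable approximation. I would use a Friedrichs/Galerkin truncation in which, at each step, the linear algebraic relation $Mj = \sigma(-\nabla\pi_2 + cE + v\times B)$ is solved pointwise for $j$ (always possible since $\det M\ge 1$), establish local solvability of the truncated problems by a fixed-point argument, and then transfer all quantitative control to a priori estimates uniform in the truncation parameter so that the limit recovers a genuine solution of \eqref{NSM-GO}.

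The heart of Part $(i)$ is an a priori estimate in $X^s = H^{s-1}\times H^s\times H^s$ with $s>d/2$. At the $L^2$ level the stated energy identity holds because the Lorentz work cancels: integrating by parts turns $\langle \nabla\times B, E\rangle$ into $\langle B, \nabla\times E\rangle$, $\operatorname{div} j=0$ kills the pressure contribution, and $j\perp j\times B$ kills the Hall contribution in Ohm's law. For the top order I apply $(-\Delta)^{(s-1)/2}$ to the $v$-equation and $(-\Delta)^{s/2}$ to the Maxwell pair, reuse the same cancellation, and control $v\cdot\nabla v$, $j\times B$, $v\times B$ and the derivative-losing Hall term $\kappa\nabla\times(j\times B)$ by Kato--Ponce and commutator estimates, pairing $j\in L^2_tH^s$ against $B\in L^\infty_tH^s$. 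Since the viscous dissipation only yields $\nabla v\in L^2_tH^{s-1}$, the transport term forces a separate bound of $v$ in $L^2_tL^\infty$, which I would obtain by interpolating the velocity dissipation across frequencies, adapting the heat-semigroup arguments of Germain--Ibrahim--Masmoudi and Ibrahim--Keraani (the two-dimensional case being genuinely critical). The magnetic dissipation $B\in L^2_t\dot H^{s'}$, $s'\in[1,s]$, is not visible in the equations; combining Faraday, Ohm and Ampère produces the telegrapher-type identity
\[
\frac{1}{\sigma c^2}\,\partial_{tt}B + \partial_t B - \frac{1}{\sigma}\Delta B = \nabla\times(v\times B) - \frac{\kappa}{\sigma}\,\nabla\times(j\times B),
\]
whose damped-wave structure delivers the parabolic $\dot H^1$ gain for $B$ uniformly in $c\ge 1$ and, through $j=\nabla\times B-\tfrac1c\partial_tE$, the $L^2_tH^s$ bound for $j$. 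Collecting these into one functional $\mathcal E(t)$ yields an inequality of the form $\mathcal E(t)\lesssim \|\Gamma_0\|_{X^s}^2 + \mathcal E(t)^{3/2}$; choosing $C_1$ so that \eqref{small_data} traps $\mathcal E$ in the stable regime, a continuity/bootstrap argument gives the global bound with constant $C_2$, and uniqueness follows from an $L^2$ estimate on the difference of two solutions.

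For Part $(ii)$ the estimates of Part $(i)$ must be uniform in $c\ge 1$, which is exactly why \eqref{small_data} is imposed uniformly in $c$; the telegrapher identity is decisive here, ensuring the $B$- and $j$-bounds do not degenerate as $c\to\infty$. The uniform bounds furnish weak-$*$ limits $(v^c,E^c,B^c)\rightharpoonup(v,E,B)$, and to pass to the limit in the quadratic terms I would extract time-derivative bounds (e.g. $\partial_tB^c$ and $\partial_tv^c$ in a negative Sobolev space) and invoke Aubin--Lions to upgrade $v^c\to v$ and $B^c\to B$ to strong convergence in $L^2_tH^{s-\varepsilon}_{\mathrm{loc}}$, which suffices for $v\cdot\nabla v$, the Lorentz term and the Hall term once $\frac1c\partial_tE^c\to 0$ is used in $j^c=\nabla\times B^c-\frac1c\partial_tE^c$. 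Eliminating $E$ via Ohm and Faraday then identifies $(v,B)$ as a solution of \eqref{HMHD} with $\alpha=\beta=1$ and Hall coefficient $\kappa/\sigma$.

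For Part $(iii)$ both $\Gamma^\kappa$ and the $\kappa=0$ solution $\Gamma$ of \eqref{NSM-SO} exist globally with $\kappa$-uniform $X^s$ bounds, the stronger smallness $C_1'>C_1$ guaranteeing simultaneous solvability. Setting $(\delta v,\delta E,\delta B)=\Gamma^\kappa-\Gamma$ and subtracting the systems, the source terms split into a genuinely $O(\kappa)$ part, arising from $M^{-1}-I=-\kappa A(B)+O(\kappa^2)$ in Ohm's law and from the explicit Hall term $\tfrac{\kappa}{\sigma}\nabla\times(j\times B)$, plus terms linear in the difference multiplied by the (uniformly bounded) solutions; an $L^2$ energy estimate with Gr\"onwall on $[0,T]$ then gives $\|(\delta v,\delta E,\delta B)(t)\|_{L^2}\le \kappa\,C(T,d,\nu,\sigma,s,\Gamma_0)$. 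Interpolating this $L^2$-rate against the uniform $H^{s-1}$ bound for $\delta v$ and $H^s$ bound for $(\delta E,\delta B)$ yields precisely the exponents $\tfrac{s-1-s'}{s-1}$ and $\tfrac{s-s''}{s}$. The main obstacle throughout is Part $(i)$: closing the top-order estimate for a system in which $j$ is only implicit and in which neither magnetic diffusion nor the identity $j=\nabla\times B$ is available a priori, so that the $L^2_t\dot H^{s'}$ control of $B$ and the $L^2_tH^s$ control of $j$ must be squeezed out of the hyperbolic Maxwell block through the telegrapher identity, and the derivative-losing Hall nonlinearity $\kappa\nabla\times(j\times B)$ must be absorbed by exactly that gained regularity.
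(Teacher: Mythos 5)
Your overall architecture is broadly the paper's: a small-data a priori estimate in $X^s$ with the $L^2_tL^\infty$ control of $v$ obtained by heat-semigroup/paraproduct arguments \`a la Germain--Ibrahim--Masmoudi and Ibrahim--Keraani, extra integrability for $(E,B)$ squeezed out of the Maxwell block, a bootstrap under \eqref{small_data}, then compactness plus Aubin--Lions for Part $(ii)$ and an $L^2$ difference estimate with Gr\"onwall and interpolation for Part $(iii)$. However, there are two concrete gaps in Part $(i)$. First, the construction of approximate solutions: you propose to solve $Mj=\sigma(-\nabla\pi_2+cE+v\times B)$ ``pointwise for $j$'', but $\pi_2$ is not a datum --- it is the multiplier enforcing $\textnormal{div}\, j=0$, so after inverting $M(\kappa,B)$ you are left with a variable-coefficient elliptic problem for $\pi_2$ whose coefficients depend on $B$, and you must show it is solvable and that the resulting map $(v,E,B)\mapsto j$ is locally Lipschitz on your Galerkin spaces before any fixed-point argument runs. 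This is exactly the obstruction the paper identifies (it remarks that $\mathbb{P}(Mj)=\sigma(cE+\mathbb{P}(v\times B))$ ``seems not to be useful enough'') and circumvents by a different device: an iteration in which the Hall term is linearized as $j^{n-1}\times B^n$, so that Ohm's law at level $n$ determines $j^n$ explicitly from $(v^n,E^n,B^n)$ and the previous iterate, and Cauchy--Lipschitz applies directly. Your route is not hopeless, but as written the key step is asserted rather than proved.

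Second, your mechanism for $j\in L^2_tH^s$ is circular. You claim it follows from the telegrapher identity ``through $j=\nabla\times B-\tfrac1c\partial_tE$''; but $\tfrac1c\partial_tE=\nabla\times B-j$ by Amp\`ere, and $\nabla\times B\in L^2_tH^s$ would require $B\in L^2_tH^{s+1}$, which exceeds the $L^2_t\dot H^{s'}$, $s'\in[1,s]$, gain that either you or the paper can obtain. The bound actually comes for free from the $X^s$ energy identity: pairing the Maxwell block with $(E,B)$ in $H^s$ and substituting Ohm's law places the dissipation $\tfrac1\sigma\|j\|^2_{H^s}$ on the left-hand side (the Hall contribution vanishes at the $L^2$ level since $j\perp j\times B$ and must be estimated at top order). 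Your telegrapher identity is, by contrast, a legitimate alternative to the paper's route to $B\in L^2_t\dot H^{s'}$ (the paper instead tests the Maxwell system with $-(\Lambda^{s'-1}\nabla\times B,\Lambda^{s'-1}\nabla\times E)$ and integrates in time), but note that integrating $\tfrac{1}{\sigma c^2}\partial_{tt}B\cdot B$ by parts in time produces $-\tfrac{1}{\sigma c^2}\|\partial_tB\|_{L^2}^2=-\tfrac1\sigma\|\nabla\times E\|_{L^2}^2$ with the wrong sign, which must be absorbed by the separately obtained $E\in L^2_t\dot H^{s''}$ estimate; this needs to be said explicitly for the argument to be uniform in $c\geq 1$.
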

	
	\begin{remark} \label{re1}
		We add some comments to Theorem  \ref{theo_global_small} as follows:
		\begin{enumerate}
			\item The assumption on the initial data of the velocity is close to the best results up to now to \eqref{NSM} (without the Hall term), which are $v_0 \in L^2$ (see \cite{Arsenio-Gallagher_2020,Germain-Ibrahim-Masmoudi_2014,KLN_2024_2,Masmoudi_2010}) in two dimensions and in homogeneous version, $v_0 \in \dot{H}^\frac{1}{2}$ (see \cite{Germain-Ibrahim-Masmoudi_2014}) in three dimensions. In addition, as it will be seen in the proof of Theorem \ref{theo_global_small} in which we need to assume that $(E_0,B_0) \in H^s$ with $s > \frac{d}{2}$. It is mainly due to the controlling of the Hall term $(\Lambda^s j^n, \Lambda^s(j^{n-1} \times B^n))$, where somehow we have to bound the $L^\infty$ norms of $j^{n-1}$ and $B^n$ by their $H^s$ ones. In addition, to bound the term $(\Lambda^s j^n, \Lambda^s(v^n \times B^{n-1}))$ for instance, which seems to us that we should have at least $v_0 \in \dot{H}^{s-1}$. To investigate the GWP of \eqref{NSM-GO} for small initial data in either the critical case when $s = \frac{d}{2}$ or $s \in (0,\frac{d}{2})$, new ideas should be provided. Here, we also note that GWP to \eqref{HMHD} for small data in critical spaces (in the sense of Remark \ref{re-MHC} below), i.e., either $(v_0,B_0) \in (L^2 \times H^1)(\mathbb{R}^2)$ or $(v_0,B_0) \in (\dot{H}^\frac{1}{2} \times \dot{H}^\frac{1}{2})(\mathbb{R}^3)$ with $\nabla \times B_0 \in \dot{H}^\frac{1}{2}(\mathbb{R}^3)$, has recently obtained in \cite{Danchin-Tan_2021,Danchin-Tan_2022,Tan_2022,Zhang_2023} mainly due to some new observations and cancellations, see also \cite{Wan-Zhou_2015,Wan-Zhou_2019,Wan-Zhou_20191}.  At the moment, these results can be considered as the best ones. However, at the moment it is not clear to us how to apply the ideas in these papers to \eqref{NSM-GO}, mainly due to the lack of dissipation of $(E,B)$, even we have a connection between the two systems as given in Part $(ii)$ above. Therefore, our conditions on the initial data to \eqref{NSM-GO} is very close to the best mentioned ones to \eqref{HMHD}.
			
			\item As mentioned previously in the introduction that the weak convergence from \eqref{NSM-GO} to \eqref{HMHD} has been established in \cite{Peng-Wang-Xu_2022} in the case $\alpha = \beta = 1$, under suitable assumptions on the existence of solutions and also on their properties. Here, we provide the weak convergence in Therem \ref{theo_global_small}-$(ii)$ under the smallness assumption on the initial data only. Theorem  \ref{theo_global_small}-$(ii)$ also says that the hyperbolicity of \eqref{NSM} (due to the Maxwell equations) is weakly transferred into the parabolicity of \eqref{HMHD} as $c \to \infty$. We note that the weak convergence as $c \to \infty$ from \eqref{NSM} to \eqref{MHD} has been provided either in the case $\alpha = \beta = 1$ with $d = 2$  in \cite{Arsenio-Gallagher_2020} or in the case $\alpha = \frac{3}{2}$ and $\beta = 1$ with $d = 3$ in \cite{KLN_2024_2}. In addition, we note that the assumption $c \geq 1$ will be used to obtain  $c$-independent estimates of $(E,B)$ in $L^2_t(H^{s''} \times H^{s'})_x$, which is needed in proving the main $c$-independent estimate at each level $n \in \mathbb{N}$. 
			
			\item In Part $(iii)$, we note that the existence and uniqueness of solutions $(v,E,B)$ to \eqref{NSM-SO} in $(0,T)$, for any $T \in (0,\infty)$, can be obtained as in \cite{Arsenio-Gallagher_2020,KLN_2024_2,Masmoudi_2010} for large initial data with time-dependent estimates in two dimensions. In addition, in three dimensions the same thing can also be provided as in Part $(i)$. Moreover, as mentioned previously in the introduction, the GWP of \eqref{NSM-SO} for small initial data in $\dot{H}^\frac{1}{2}(\mathbb{R}^3)$ can be established as the case of \eqref{NSM} given in \cite{Germain-Ibrahim-Masmoudi_2014}. Finally, the limit from \eqref{HMHD} to \eqref{MHD} as the Hall coefficient goes to zero was also established in \cite{Dai_2020,Wan-Zhou_2019}.
		\end{enumerate}
	\end{remark}
	
	\begin{theorem}[\textnormal{Local well-posedness for possibly large initial data}] \label{theo_local}
		Let $\alpha = 1$, $c, \kappa,\sigma > 0$ and $\Gamma_0 := (v_0,E_0,B_0) \in H^s(\mathbb{R}^d)$ with $s \in \mathbb{R},s > \frac{d}{2} + 1$ and $d \in \{2,3\}$. 
		\begin{enumerate}
			\item[\textnormal{(}i\textnormal{)}]\textnormal{(Local well-posedness).}  \eqref{NSM-GO} with $\nu = 0$ has a unique solution $(v,E,B)$ in $(0,T_0)$ for some  $T_0 = T_0(d,\kappa,\sigma,s,\Gamma_0) \in (0,1]$ satisfying 
			\begin{align*}
				&(v,E,B) \in L^\infty(0,T_0;H^s), \quad j \in L^2(0,T_0;H^s),
			\end{align*}
			and for $t \in (0,T_0)$ 
			\begin{align*}
				\|(v,E,B)(t)\|^2_{H^s} + \int^t_0 
				\|j\|^2_{H^s}  \,d\tau \leq C(d,\kappa,\sigma,s)\|\Gamma_0\|^2_{H^s}.
			\end{align*}
			
			\item[\textnormal{(}ii\textnormal{)}] \textnormal{(Inviscid limit).} Let $\nu > 0$. There exists a sequence of solutions $(v^\nu,E^\nu,B^\nu)$ to \eqref{NSM-GO} in $(0,T_{01})$  with  $(v^\nu,E^\nu,B^\nu)_{|_{t=0}} = \Gamma_0$ and $T_{01} = T_{01}(d,\kappa,\sigma,s,\Gamma_0) \in (0,1]$.
			Moreover, for $t \in (0,T_0 := \min\{T_{01},T_{02}\})$ and for $s' \in [0,s)$
			\begin{equation*}
				\|(v^\nu-v,E^\nu-E,B^\nu-B)(t)\|_{H^{s'}} \leq \nu^\frac{s-s'}{s}C(d,\sigma,s,\Gamma_0),
			\end{equation*}
			where $(v,E,B)$ is the unique solution to \eqref{NSM-GO} with $\nu = 0$ and $(v,E,B)_{|_{t=0}} = \Gamma_0$ given as in Part $(i)$ in $(0,T_{02})$.
			
			\item[\textnormal{(}iii\textnormal{)}] \textnormal{(The limit as $c \to \infty$).} Let $c > 0$ and $(v^c_0,E^c_0,B^c_0) \in H^s(\mathbb{R}^d)$ satisfying $\textnormal{div}\, v^c_0 = \textnormal{div}\, B^c_0 = 0$ and as $c \to \infty$
			\begin{equation*}
				(v^c_0,E^c_0,B^c_0) \rightharpoonup (\bar{v}_0,\bar{E}_0,\bar{B}_0) \qquad \text{in} \quad H^s
			\end{equation*}
			for some $(\bar{v}_0,\bar{E}_0,\bar{B}_0)$ with $\textnormal{div}\, \bar{v}_0 = \textnormal{div}\, \bar{B}_0 = 0$. Then there exists a sequence of solutions $(v^c,E^c,B^c)$ to \eqref{NSM-GO} with $\nu = 0$ and $(v^c,E^c,B^c)_{|_{t=0}} = (v^c_0,E^c_0,B^c_0)$ given as in Part $(i)$ in $(0,T_0)$ for some $T_0 > 0$. In addition,  up to an extraction of a subsequence, $(v^c,B^c)$ converges to $(v,B)$ in the sense of distributions as $c \to \infty$, where
			$(v,B)$ satisfies \eqref{HMHD} with $\nu = 0$, $\beta = 1$ and $(v,B)_{|_{t=0}} = (\bar{v}_0,\bar{B}_0)$.
			
			\item[\textnormal{(}iv\textnormal{)}] \textnormal{(The limit as $\kappa \to 0$).} Let $\kappa \in (0,1)$ and $(v^\kappa,E^\kappa,B^\kappa)$ be the unique  solution of \eqref{NSM-GO} given as in Part $(i)$ with  the initial data $\Gamma_0 := (v_0,E_0,B_0)$. Let $\Gamma := (v,E,B)$ be the unique solution of \eqref{NSM-SO} with the same initial data, which is obtained as in Part $(i)$ with $\kappa = 0$. Then, for $t \in (0,T_0)$ and $s' \in [0,s)$
			\begin{equation*}
				\|(v^\kappa-v,E^\kappa-E,B^\kappa-B)(t)\|_{H^{s'}} \leq \kappa^\frac{s-s'}{s}C(d,\nu,\sigma,s,\Gamma_0).
			\end{equation*}
		\end{enumerate}
		
	\end{theorem}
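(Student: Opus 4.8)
The plan is to prove the four parts in order, with Part $(i)$ carrying the analytic weight and Parts $(ii)$–$(iv)$ following by perturbation/compactness arguments that parallel those already used for Theorem~\ref{theo_global_small}. Throughout I write $\Lambda=(-\Delta)^{1/2}$ and work at regularity $H^s$ with $s>\tfrac d2+1$, so that $H^{s-1}\hookrightarrow L^\infty$ and every field together with its first derivatives is bounded; this Sobolev threshold is exactly what replaces the missing $v$-dissipation in the inviscid case. Because $j$ is defined only implicitly through the generalized Ohm law, the usual mollification of \eqref{NSM-GO} is inconvenient, so I would build smooth approximants by a linearized iteration: given $(v^n,E^n,B^n)$, define the next current $j^{n+1}$ as the unique divergence-free solution of the \emph{linear} relation $j^{n+1}+\kappa\,\mathbb{P}(j^{n+1}\times B^n)=\sigma c\,E^{n+1}+\sigma\,\mathbb{P}(v^n\times B^n)$ coupled to the level-$(n+1)$ Maxwell equations, with $v^{n+1}$ solving the linear transport–Lorentz equation with coefficients $v^n,B^n,j^{n+1}$. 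The operator $w\mapsto w+\kappa\,\mathbb{P}(w\times B^n)$ is coercive on divergence-free fields since $(w,w\times B^n)=0$, so $j^{n+1}$ is well defined, and keeping $E^{n+1}$ (rather than $E^n$) on the right preserves the dissipative structure at each step.

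The heart of the matter is a uniform $H^s$ estimate. Pairing each equation with $\Lambda^{2s}$ of the corresponding field, I would use incompressibility to kill $v\cdot\nabla$ at top order, the identity $(\nabla\times B,E)=(B,\nabla\times E)$ to remove the Maxwell cross terms, and the substitution $cE=\tfrac1\sigma\bigl(j+\kappa\,\mathbb{P}(j\times B)\bigr)-\mathbb{P}(v\times B)$ into $-c(\Lambda^s j,\Lambda^s E)$ to extract the dissipation $\tfrac1\sigma\|j\|_{H^s}^2$ together with the good coupling term $(\Lambda^s j,\Lambda^s(v\times B))$, which cancels the Lorentz contribution $(\Lambda^s(j\times B),\Lambda^s v)$ at order $s$ up to commutators. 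This produces
\begin{equation*}
\frac12\frac{d}{dt}\|(v,E,B)\|_{H^s}^2+\frac1\sigma\|j\|_{H^s}^2\lesssim \|v\|_{H^s}^3+\|v\|_{H^s}\|B\|_{H^s}\|j\|_{H^s}+\frac\kappa\sigma\bigl|(\Lambda^s j,\Lambda^s(j\times B))\bigr|.
\end{equation*}
The main obstacle is the last, Hall, term: it is quadratic in $j$ at top order and cannot be absorbed by $\tfrac1\sigma\|j\|_{H^s}^2$ for large data. I would tame it using the pointwise identity $(\Lambda^s j)\cdot\bigl((\Lambda^s j)\times B\bigr)=0$, which replaces $\Lambda^s(j\times B)$ by the commutator $\Lambda^s(j\times B)-(\Lambda^s j)\times B$ of size $\lesssim\|B\|_{H^s}\|j\|_{H^{s-1}}$, and \emph{crucially} Ampère's law $j=\nabla\times B-\tfrac1c\partial_t E$ to identify $\|j\|_{H^{s-1}}$ with $\|\nabla\times B\|_{H^{s-1}}\lesssim\|B\|_{H^s}$ modulo the displacement current. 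This is precisely the mechanism that turns the $j$-dissipation into the $L^2_t\dot H^1$ control of $(E,B)$: the inner $j$-factor becomes the energy-controlled quantity $\|B\|_{H^s}$, so only the outer factor $\|j\|_{H^s}$ is left to be absorbed by Young's inequality. The residual bound is polynomial in $Y:=\|(v,E,B)\|_{H^s}^2$, and a Grönwall argument yields a lifespan $T_0=T_0(d,\kappa,\sigma,s,\Gamma_0)$ and the stated estimate, with constants independent of $c$ because every power of $c$ has cancelled through the Ohm/Ampère substitutions. Passing to the limit uses Aubin–Lions compactness for $v$ and $B$ (whose time derivatives are bounded from the equations) to handle the nonlinearities $v\cdot\nabla v$, $v\times B$ and $j\times B$, with the $L^2_tH^s$ bound controlling $j$; uniqueness follows from an $L^2$ difference estimate based on the same cancellations and Grönwall, where the only subtlety is that $\|j\|_{L^\infty_x}$ enters as an $L^1_t$ (indeed $L^2_t$) coefficient.

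For Part $(ii)$ I would estimate the difference $(w,\mathcal E,\mathcal B)=(v^\nu-v,E^\nu-E,B^\nu-B)$, which solves the system linearized around the two solutions with the single extra forcing $\nu(-\Delta)v$ in the momentum equation. Since the data coincide, a base-level ($L^2$) estimate—treating $\nu(-\Delta)v$ as an $L^1_tL^2_x$ source of size $\nu\|v\|_{H^2}$ and running Grönwall on $\|(w,\mathcal E,\mathcal B)\|_{L^2}$—gives convergence at rate $\nu$ on $(0,T_0)$, all coefficients being bounded by the uniform $H^s$ bounds of both solutions from Part~$(i)$. Interpolating $\|(w,\mathcal E,\mathcal B)\|_{H^{s'}}\le\|(w,\mathcal E,\mathcal B)\|_{L^2}^{1-s'/s}\|(w,\mathcal E,\mathcal B)\|_{H^s}^{s'/s}$ then yields the stated rate $\nu^{(s-s')/s}$. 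Part $(iv)$ is identical in structure, the perturbation now being the Hall term $\kappa\,j\times B$ in Ohm's law, of size $\kappa$, so the base estimate converges at rate $\kappa$ and interpolation gives $\kappa^{(s-s')/s}$.

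Part $(iii)$ follows the route of Theorem~\ref{theo_global_small}$(ii)$. The $c$-independent bounds of Part~$(i)$ give a family $(v^c,E^c,B^c)$ bounded in $L^\infty_tH^s$ with $j^c$ bounded in $L^2_tH^s$; extracting a weak limit and upgrading $(v^c,B^c)$ to strong convergence by Aubin–Lions, I would pass to the limit in the sense of distributions. Because $E^c$ is bounded in $L^\infty_tL^2$, the displacement current $\tfrac1c\partial_t E^c$ vanishes in $\mathcal D'$ as $c\to\infty$, so Ampère's law forces the weak limit of $j^c$ to be $\nabla\times B$; combining this with the limit of the Faraday law multiplied by $c$, namely $\partial_t B=-\nabla\times(cE)$, and the limit of Ohm's law for $cE$, recovers the induction equation of \eqref{HMHD} with $\beta=1$ and $\nu=0$, while the momentum equation passes directly through the strong convergence of $(v^c,B^c)$. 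The only delicate point, exactly as for \eqref{NSM}, is the lack of compactness from the hyperbolicity of the Maxwell block, which is why strong convergence is extracted only for $(v^c,B^c)$ and the conclusion is stated in the sense of distributions.
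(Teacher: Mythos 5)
Your overall architecture (linearized iteration with Fourier truncation, $H^s$ energy estimate with the Kato--Ponce commutator for the convection term, passage to the limit by compactness, and Parts (ii)--(iv) by $L^2$ difference estimates plus interpolation, respectively by the weak-convergence argument of Theorem \ref{theo_global_small}-$(ii)$) matches the paper. But there is a genuine gap in the core of Part $(i)$: your treatment of the Hall term. You keep the Hall nonlinearity quadratic in the \emph{new} current, $j^{n+1}\times B^{n}$ (and, in the a priori estimate, $j\times B$), and propose to tame $\frac{\kappa}{\sigma}\bigl(\Lambda^s j,\Lambda^s(j\times B)\bigr)$ by the pointwise orthogonality plus the commutator bound, which leaves $\frac{\kappa}{\sigma}\|j\|_{H^s}\,\|B\|_{H^s}\,\|j\|_{H^{s-1}}$, and then to ``identify $\|j\|_{H^{s-1}}$ with $\|\nabla\times B\|_{H^{s-1}}$ via Amp\`ere's law modulo the displacement current.'' That last step fails: in \eqref{NSM-GO} one has $j=\nabla\times B-\frac1c\partial_t E$, and the displacement current is \emph{exactly} the discrepancy --- there is no a priori control of $\partial_t E$ in $H^{s-1}$ other than through $j$ itself, so the identification is circular and the mechanism that works for \eqref{HMHD} (where $j=\nabla\times B$ genuinely costs one derivative fewer than the dissipation) is not available here. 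Without it you are left with $\frac{\kappa}{\sigma}\|B\|_{H^s}\|j\|_{H^s}^2$, which can be absorbed into the dissipation $\frac1\sigma\|j\|_{H^s}^2$ only when $\kappa\|B\|_{H^s}$ is small; since $\|B\|_{L^\infty_tH^s}\geq\|B_0\|_{H^s}$, no choice of small $T$ rescues this for large data. The same obstruction already appears at the level of defining your iterate: inverting $w\mapsto w+\kappa\,\mathbb{P}(w\times B^n)$ is easy in $L^2$ by the orthogonality $(w,w\times B^n)=0$, but its invertibility in $H^s$ again requires smallness of $\kappa\|B^n\|$ in a strong norm.

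The paper avoids this entirely by choosing the opposite linearization of the Hall term: at level $n$ the Ohm law reads $j^{n}+\kappa\,\mathbb{P}\mathcal{T}_{\overline n}(j^{n-1}\times B^{n})=\sigma(cE^{n}+\mathbb{P}\mathcal{T}_{\overline n}(v^{n}\times B^{n-1}))$, i.e.\ \emph{old} current times \emph{new} field. The resulting Hall contribution $-\frac{\kappa}{\sigma}\int\Lambda^s j^{n}\cdot\Lambda^s(j^{n-1}\times B^{n})$ is linear in the unknown $j^{n}$ and is bounded by $\frac{\epsilon}{\sigma}\|j^{n}\|_{H^s}^2+C\|j^{n-1}\|_{H^s}^2\|B^{n}\|_{H^s}^2$, where $\|j^{n-1}\|_{H^s}^2$ is an $L^1_t$ Gr\"onwall coefficient inherited from the previous level; choosing $T$ so small that $\int_0^{T}\|j^{n-1}\|_{H^s}^2\,d\tau$ (together with $T\|\Gamma_0\|_{H^s}+T\|\Gamma_0\|_{H^s}^2$) is below a fixed threshold closes the induction for arbitrarily large data. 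If you rework your iteration with this linearization (and correspondingly drop the Amp\`ere-law step), the rest of your argument --- including the distributional passage to the limit via Aubin--Lions, which the paper also uses in place of a contraction estimate, and your Parts (ii)--(iv) --- goes through as in the paper.
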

	
	\begin{remark}
		We add some comments to Theorem \ref{theo_local} as follows:
		\begin{enumerate}
			\item As in the case of the Euler equations (see \cite{Majda_Bertozzi_2002}), in Part $(i)$ the condition $s > \frac{d}{2} + 1$ is due to the controlling of the convection term in the $\dot{H}^s$ estimate in the inviscid case $\nu = 0$. However, it is possible to control other terms with $s > \frac{d}{2}$ only. Thus, if $\nu > 0$ and $\alpha = 1$ in \eqref{NSM-GO} then the idea in \cite[Theorem 1.2]{Fefferman-McCormick-Robinson-Rodrigo_2014} can be applied to control the $\dot{H}^s$ estimate of the convection term, i.e., 
			as in the proof of Part $(i)$-Step 2,  for $s > \frac{d}{2}$
			\begin{equation*}
				\int_{\mathbb{R}^d} [\Lambda^s (v^1 \cdot \nabla v^2) - v^1 \cdot \nabla \Lambda^s v^2] \cdot \Lambda^s v^2 \,dx \leq C(d,s) \|\nabla v^1\|_{H^s} \|v^2\|^2_{H^s},
			\end{equation*}
			which allows us to provide the LWP of \eqref{NSM-GO} for $H^s(\mathbb{R}^d)$ initial data with $s > \frac{d}{2}$. In addition, in this viscous case the assumption on the initial velocity can be reduced more to $v_0 \in H^{s-1+\epsilon_0}(\mathbb{R}^d)$ for $s > \frac{d}{2}$ and any $\epsilon_0 \in (0,1)$ by adapting the technique in \cite{Fefferman-McCormick-Robinson-Rodrigo_2017} for \eqref{MHD} with $\alpha = 1$, $\nu > 0$ and $\sigma = \infty$, where their result is almost optimal. Furthermore, as mentioned previously, the LWP of \eqref{NSM-GO} in critical space $H^\frac{d}{2}(\mathbb{R}^d)$ could require a new technique, since \cite[Theorem 1.2]{Fefferman-McCormick-Robinson-Rodrigo_2014} does not hold at least for $d = 2$. In the case of \eqref{HMHD} with $\alpha = \beta = 1$, $\nu,\sigma,\kappa > 0$, see also \cite{Dai_2021_2} for initial data $(v_0,B_0) \in (H^s \times H^{s+1-\epsilon_0})(\mathbb{R}^3)$ for $s > \frac{1}{2}$ and any $\epsilon_0 \in (0,s-\frac{1}{2})$. The LWP of \eqref{HMHD} with $\alpha = \beta  = 1$ for possibly large initial data in the critical spaces (in the sense of Remark \ref{re-MHC}) $(v_0,B_0) \in (L^2 \times H^1)(\mathbb{R}^2)$ and $(v_0,B_0) \in (\dot{H}^\frac{1}{2} \times \dot{H}^\frac{1}{2})(\mathbb{R}^3)$ with $\nabla \times B_0 \in \dot{H}^\frac{1}{2}(\mathbb{R}^3)$ with be considered in our forthcoming work \cite{KLN_2024_4}, where the splitting method on initial data is applied. We remark that this idea can also be applied to obtain the LWP of \eqref{NSM-GO} for initial data in $(H^{s-1} \times H^s \times H^s)(\mathbb{R}^d)$ with $s > \frac{d}{2}$. This local result seems to be almost optimal, since recently the authors in \cite{Chen-Nie-Ye_2024} proved the ill-posedness of \eqref{MHD} with $\sigma = \infty$ and $\kappa = 0$ (without the magnetic resistivity and Hall effect) for initial data in $(H^{\frac{d}{2}-1} \times H^\frac{d}{2})(\mathbb{R}^d)$, $d \geq 2$. In the cases of \eqref{NSM} and \eqref{NSM-SO}, the GWP in $(H^\epsilon \times H^1 \times H^1)(\mathbb{R}^2)$ is given in \cite{KLN_2024_2} for any $\epsilon \in (0,1)$, and the LWP in $(H^{\frac{1}{2}+} \times H^\frac{3}{2} \times H^\frac{3}{2})(\mathbb{R}^3)$ can be proved in a suitable way. Thus, the well- or ill-posedness of \eqref{NSM}, \eqref{NSM-SO} and \eqref{NSM-GO} in $(H^{\frac{d}{2}-1} \times H^\frac{d}{2} \times H^\frac{d}{2})(\mathbb{R}^d)$, $d \in \{2,3\}$ is still open. The main difficulty relies on obtaining the $\dot{H}^\frac{d}{2}(\mathbb{R}^d)$ estimate of $(E,B)$ in which we do not have the Sobolev embedding $\dot{H}^\frac{d}{2} \xhookrightarrow{} L^\infty$ and some suitable paraproduct rule in the Sobolev context.
			
			
			\item In Part $(iii)$, the LWP of \eqref{NSM-SO} can be obtained as in \cite[Theorem 1.4]{KLN_2024_2} in some time interval $(0,T_0)$ for some constant $T_0 > 0$ depnending on the parameteres and initial data. Moreover, here we also note that it can be seen from the proof that the time existence $T_0$ in Part $(i)$ does not depend on $\kappa$ in the case $\kappa \in (0,1)$.
		\end{enumerate}
	\end{remark}

	%
	\subsubsection{Magnetic helicity conservation}
	%
	
	In this section,  we study the (conditional) conservation of magnetic helicity to \eqref{NSM-GO} and \eqref{HMHD} as the electric conductivity $\sigma$ goes to infinity.  More precisely, the statement in the case of \eqref{NSM-GO} is given as follows.
	
	\begin{theorem}[Conditional magnetic helicity conservation for \eqref{NSM-GO}] \label{theo-MH}
		The following statements hold \textnormal{(}for simplicity, we will write the dependency on $\sigma$ only\textnormal{)}.
		\begin{enumerate}
			\item[\textnormal{(}i\textnormal{)}] Let $\alpha = 1$, $c \geq 1$, $\kappa,\nu, \sigma > 0$ and  $\Gamma^\sigma_0 := (v^\sigma_0,E^\sigma_0,B^\sigma_0) \in (H^{s-1} \times H^s \times H^s)(\mathbb{R}^3)$ with $s \in \mathbb{R}, s > \frac{3}{2}$. If $\Gamma^\sigma_0$ satisfies \eqref{small_data} and $(E^\sigma_0,B^\sigma_0) \in \dot{H}^{-1}(\mathbb{R}^3)$ then \eqref{NSM-GO} has a unique global solution $\Gamma^\sigma = (v^\sigma,E^\sigma,B^\sigma)$ satisfying \eqref{NSM-GO-limit} and for $t > 0$
			\begin{align} \nonumber
				&\|\Gamma^\sigma(t)\|^2_{X^s} +  \int^t_0 \|\nabla v^\sigma\|^2_{H^{s-1}} + \|v^\sigma\|^2_{L^\infty} + \|E^\sigma\|^2_{\dot{H}^{s''}} \,d\tau 
				\\ \label{Gamma_sigma_1} \tag{MHC-1}
				&\quad + \int^t_0 \|B^\sigma\|^2_{\dot{H}^{s'}} + \|j^\sigma\|^2_{H^s} \,d\tau \leq C(d,\kappa,\nu,\sigma,s) \|\Gamma^\sigma_0\|^2_{X^s},
				\\ \label{Gamma_sigma_2} \tag{MHC-2}
				&\|\Gamma^\sigma(t)\|^2_{L^2} + \int^t_0 \nu \|\nabla v^\sigma\|^2_{L^2} + \frac{1}{\sigma}\|j^\sigma\|^2_{L^2} \,d\tau \leq \|\Gamma^\sigma_0\|^2_{L^2},
				\\ \label{Gamma_sigma_3} \tag{MHC-3}
				&\|(E^\sigma,B^\sigma)(t)\|^2_{\dot{H}^{-1}} + \frac{1}{\sigma} \int^t_0  \|j^\sigma\|^2_{\dot{H}^{-1}} \,d\tau \leq C(t,\kappa,\nu,\sigma,s,\Gamma^\sigma_0).
			\end{align}
			
			\item[\textnormal{(}ii\textnormal{)}] Suppose that Part $(i)$ still holds without assuming Condition \eqref{small_data} on the initial data $\Gamma^\sigma_0$. Assume that $\Gamma^\sigma_0$ is uniformly bounded in $L^2(\mathbb{R}^3)$ in terms of $\sigma$. Furthermore, if $B^\sigma_0 \to B_0$ in $\dot{H}^{-1}(\mathbb{R}^3)$ as $\sigma \to \infty$ for some $B_0 \in (H^s \cap \dot{H}^{-1})(\mathbb{R}^3)$ with $\textnormal{div}\, B_0 = 0$ then
			\begin{equation} \label{MHC} \tag{MHC-a}
				\lim_{\sigma \to \infty} \int_{\mathbb{R}^3} A^\sigma(t) \cdot B^\sigma(t) \,dx 
				=  \int_{\mathbb{R}^3} A_0 \cdot B_0\,dx \quad \text{for a.e. }   t \in (0,\infty),
			\end{equation}
			where  $\nabla \times  f = g$ and $\textnormal{div}\,f = 0$ for $f \in \{A^\sigma,A_0\}$ corresponding to $g \in \{B^\sigma,B_0\}$. Finally, if the initial magnetic helicity is positive then there exists an absolute positive constant $C$ such that
			\begin{equation} \label{MHC_1} \tag{MHC-b}
				\liminf_{t\to \infty} \liminf_{\sigma \to \infty} \|B^{\sigma}(t)\|^2_{\dot{H}^{-\frac{1}{2}}(\mathbb{R}^3)} \geq  C\int_{\mathbb{R}^3} A_0 \cdot B_0 \,dx > 0.
			\end{equation}
		\end{enumerate}
	\end{theorem}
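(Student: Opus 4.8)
The plan is to establish the three a priori bounds of Part $(i)$ first and then to read the helicity law and its $\sigma\to\infty$ behaviour off from them in Part $(ii)$. Estimate \eqref{Gamma_sigma_1} is exactly the conclusion of Theorem~\ref{theo_global_small}-$(i)$ applied to the data $\Gamma^\sigma_0$, hence available once \eqref{small_data} holds. For \eqref{Gamma_sigma_2} I would test the velocity equation of \eqref{NSM-GO} by $v^\sigma$, the Maxwell pair by $(E^\sigma,B^\sigma)$, add them, and substitute the generalized Ohm's law $cE^\sigma=\tfrac1\sigma(j^\sigma+\kappa\,j^\sigma\times B^\sigma)+\nabla\pi_2-v^\sigma\times B^\sigma$; the decisive cancellations are $j^\sigma\cdot(j^\sigma\times B^\sigma)=0$ pointwise and $\int j^\sigma\cdot\nabla\pi_2\,dx=0$ (from $\textnormal{div}\,j^\sigma=0$), leaving precisely $\tfrac12\tfrac{d}{dt}\|\Gamma^\sigma\|^2_{L^2}+\nu\|\nabla v^\sigma\|^2_{L^2}+\tfrac1\sigma\|j^\sigma\|^2_{L^2}=0$, so the Hall term never touches the $L^2$ energy. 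For \eqref{Gamma_sigma_3} I pair the Maxwell equations with $\Lambda^{-2}(E^\sigma,B^\sigma)$; the two curl terms cancel because $\nabla\times$ commutes with $\Lambda^{-1}$, and substituting Ohm's law (again killing $\nabla\pi_2$) produces $\tfrac12\tfrac{d}{dt}\|(E^\sigma,B^\sigma)\|^2_{\dot H^{-1}}+\tfrac1\sigma\|j^\sigma\|^2_{\dot H^{-1}}=\int\Lambda^{-2}(v^\sigma\times B^\sigma)\cdot j^\sigma\,dx-\tfrac\kappa\sigma\int\Lambda^{-2}(j^\sigma\times B^\sigma)\cdot j^\sigma\,dx$. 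The right-hand side is controlled by the $H^s$ and $L^2_tL^\infty$ bounds of \eqref{Gamma_sigma_1}, so a Gr\"onwall argument started from $(E^\sigma_0,B^\sigma_0)\in\dot H^{-1}$ closes \eqref{Gamma_sigma_3}.

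For Part $(ii)$ I avoid gauge ambiguity by defining the potential directly as $A^\sigma:=\Lambda^{-2}\nabla\times B^\sigma$, which is legitimate since $B^\sigma\in L^\infty_t(H^s\cap\dot H^{-1})$ is solenoidal, so that $\int A^\sigma\cdot B^\sigma\,dx$ is finite. Using that $\Lambda^{-2}\nabla\times$ is self-adjoint on solenoidal fields, that $\nabla\times\Lambda^{-2}\nabla\times B^\sigma=B^\sigma$, and that $\tfrac1c\partial_tB^\sigma=-\nabla\times E^\sigma$, differentiation gives $\tfrac{d}{dt}\int A^\sigma\cdot B^\sigma\,dx=-2c\int E^\sigma\cdot B^\sigma\,dx$. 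Substituting Ohm's law once more and discarding $(j^\sigma\times B^\sigma)\cdot B^\sigma=(v^\sigma\times B^\sigma)\cdot B^\sigma=0$ and $\int\nabla\pi_2\cdot B^\sigma\,dx=0$ collapses everything to the clean law $\tfrac{d}{dt}\int A^\sigma\cdot B^\sigma\,dx=-\tfrac2\sigma\int j^\sigma\cdot B^\sigma\,dx$. This is the structural core: although the Hall term sits inside Ohm's law, it contributes nothing to the magnetic helicity, just as in the resistive limit of \eqref{HMHD}.

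To obtain \eqref{MHC} I integrate this law and bound the defect by $\tfrac2\sigma\big|\int_0^t\!\int j^\sigma\cdot B^\sigma\,dx\,d\tau\big|\le\tfrac2\sigma\big(\int_0^t\|j^\sigma\|^2_{L^2}\,d\tau\big)^{1/2}\big(\int_0^t\|B^\sigma\|^2_{L^2}\,d\tau\big)^{1/2}$; since \eqref{Gamma_sigma_2} yields $\int_0^t\|j^\sigma\|^2_{L^2}\,d\tau\le\sigma\|\Gamma^\sigma_0\|^2_{L^2}$ and $\|B^\sigma\|_{L^\infty_tL^2}\le\|\Gamma^\sigma_0\|_{L^2}$, the defect is $O\big(\sqrt t\,\sigma^{-1/2}\|\Gamma^\sigma_0\|^2_{L^2}\big)$ and vanishes by the uniform $L^2$ bound on the data. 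Together with convergence of the initial helicity $\int A^\sigma_0\cdot B^\sigma_0\,dx\to\int A_0\cdot B_0\,dx$ — which follows by writing the difference bilinearly, moving the smoothing $\Lambda^{-2}\nabla\times$ onto the factor converging in $\dot H^{-1}$, and using the $L^2$ bound — this gives \eqref{MHC}. Finally, \eqref{MHC_1} is soft: the elementary estimate $|\int A^\sigma\cdot B^\sigma\,dx|\le\|A^\sigma\|_{\dot H^{1/2}}\|B^\sigma\|_{\dot H^{-1/2}}\le C\|B^\sigma\|^2_{\dot H^{-1/2}}$ (using $\|A^\sigma\|_{\dot H^{1/2}}\simeq\|B^\sigma\|_{\dot H^{-1/2}}$) together with \eqref{MHC} gives $\liminf_{\sigma\to\infty}\|B^\sigma(t)\|^2_{\dot H^{-1/2}}\ge C^{-1}\int A_0\cdot B_0\,dx$ for a.e.\ $t$, and taking $\liminf_{t\to\infty}$ of the $t$-independent right-hand side yields \eqref{MHC_1}.

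The step I expect to be the main obstacle is \eqref{Gamma_sigma_3}: the bilinear remainders $\int\Lambda^{-2}(v^\sigma\times B^\sigma)\cdot j^\sigma\,dx$ and $\tfrac\kappa\sigma\int\Lambda^{-2}(j^\sigma\times B^\sigma)\cdot j^\sigma\,dx$ live at negative regularity, where no Sobolev product rule applies automatically, so one must exploit the smoothing of $\Lambda^{-2}$ and the high-regularity bounds of \eqref{Gamma_sigma_1} with care. By contrast the rest is structural, and in particular the decisive $\sigma^{-1/2}$ gain in \eqref{MHC} is forced by the mismatch between the $\tfrac1\sigma$ prefactor and the merely $O(\sigma)$ time integral of $\|j^\sigma\|^2_{L^2}$ supplied by \eqref{Gamma_sigma_2}.
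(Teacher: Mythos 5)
Your proposal is correct and takes essentially the same route as the paper: Part $(i)$ rests on the same cancellations for the energy identity and the same $\dot H^{-1}$ pairing of the Maxwell system (the paper carries this bound through its Fourier-truncated iteration rather than as a direct a priori estimate on the limit, and resolves the negative-regularity bilinear remainders you flag as the main obstacle via the embedding $L^{6/5}(\mathbb{R}^3)\hookrightarrow\dot H^{-1}(\mathbb{R}^3)$, i.e.\ $\|f\times g\|_{\dot H^{-1}}\leq C\|f\|_{L^3}\|g\|_{L^2}$). Part $(ii)$ is the paper's argument essentially verbatim: the helicity law $\frac{d}{dt}\int_{\mathbb{R}^3} A^\sigma\cdot B^\sigma\,dx=-\frac{2}{\sigma}\int_{\mathbb{R}^3} j^\sigma\cdot B^\sigma\,dx$, the Cauchy--Schwarz bound giving a defect of order $\sigma^{-1/2}$ from \eqref{Gamma_sigma_2}, the bilinear treatment of the convergence of the initial helicities, and the $\dot H^{1/2}$--$\dot H^{-1/2}$ duality bound yielding \eqref{MHC_1}.
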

	
	We now focus on the conservation of magnetic helicity to \eqref{HMHD} with the critical fractional Lapacians for both the velocity and magnetic fields, i.e., (to simplify the presentation, we will write the dependency on $\sigma$ only) for  $\alpha_c := \frac{5}{4}$ and $\beta_c := \frac{7}{4}$.

	\begin{theorem}[Magnetic helicity conservation for the critical H-MHD] \label{theo-C-HMHD}
		Let $\kappa,\nu,\sigma > 0$. Assume that $v^\sigma_0 \in H^s(\mathbb{R}^3)$ and $B^\sigma_0 \in (H^s \cap \dot{H}^{-1})(\mathbb{R}^3)$ with $s > \frac{7}{4}$. Then \eqref{HMHD} with $(\alpha,\beta) = (\alpha_c,\beta_c)$ has a unique solution $(v^\sigma,B^\sigma)$ with $(v^\sigma,B^\sigma)_{|_{t=0}} = (v^\sigma_0,B^\sigma_0)$ such that for any $t \in (0,\infty)$
		\begin{align*}
			(v^\sigma,B^\sigma) &\in L^\infty(0,\infty;H^s) \cap L^2(0,\infty;\dot{H}^{\alpha_c} \times \dot{H}^{\beta_c}) \cap L^2(0,t;H^{s+\alpha_c} \times H^{s+\beta_c}), 
			\\
			B^\sigma &\in L^\infty(0,t;\dot{H}^{-1}),
		\end{align*}
		and for some positive constant $C = C(t,\kappa,\sigma,\nu,s,v^\sigma_0,B^\sigma_0)$
		\begin{align*}
			&\|(v^\sigma,B^\sigma)(t)\|^2_{L^2}  + \int^t_0 \nu \|v^\sigma\|^2_{\dot{H}^{\alpha_c}}  + \frac{1}{\sigma} \|B^\sigma\|^2_{\dot{H}^{\beta_c}} \,d\tau \leq \|(v^\sigma_0,B^\sigma_0)\|^2_{L^2},
			\\
			&\|(v^\sigma,B^\sigma)(t)\|^2_{H^s}  + \|B^\sigma(t)\|^2_{\dot{H}^{-1}} + \int^t_0 \nu \|v^\sigma\|^2_{H^{s+\alpha_c}}  + \frac{1}{\sigma} \|B^\sigma\|^2_{H^{s+\beta_c}} \,d\tau \leq C.
		\end{align*}
		In addition,  if $(v^\sigma_0,B^\sigma_0)$ is uniformly bounded in $L^2(\mathbb{R}^3)$ in terms of $\sigma$ and $B^\sigma_0 \to B_0$ in $\dot{H}^{-1}(\mathbb{R}^3)$ as $\sigma \to \infty$ for some $B_0 \in (H^s \cap \dot{H}^{-1})(\mathbb{R}^3)$ with $\textnormal{div}\, B_0 = 0$ then \eqref{MHC} follows. Finally, if the initial magnetic helicity is positive then \eqref{MHC_1} holds.
	\end{theorem}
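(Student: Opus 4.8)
The plan is to handle the four assertions of the theorem in sequence: global well-posedness with the two energy bounds, propagation of the $\dot{H}^{-1}$ norm of $B^\sigma$, the limiting identity \eqref{MHC}, and finally the lower bound \eqref{MHC_1}; the helicity analysis will rest entirely on the a priori estimates produced in the first step. For the well-posedness I would construct $(v^\sigma,B^\sigma)$ by a mollified (or Galerkin) approximation and pass to the limit using two families of bounds. The $L^2$ balance comes from the usual magnetohydrodynamic cancellations: testing the velocity equation against $v^\sigma$ and the magnetic equation against $B^\sigma$, the Lorentz force $(\nabla\times B)\times B$ cancels the advective induction term $\nabla\times(v\times B)$, while the Hall contribution drops because $((\nabla\times B)\times B)\perp(\nabla\times B)$; this gives the first displayed inequality, which is uniform in $\sigma$ once $\|(v^\sigma_0,B^\sigma_0)\|_{L^2}$ is. The propagation of the $H^s$ norm, hence of the $L^2_t(H^{s+\alpha_c}\times H^{s+\beta_c})$ dissipation, is the genuinely critical estimate; here I would invoke the commutator/product estimates as in \cite{Wan_2015}, the point being that the exponents $(\alpha_c,\beta_c)=(\tfrac54,\tfrac74)$ are chosen so the $(-\Delta)^{7/4}$ dissipation exactly balances the derivative loss of $\nabla\times((\nabla\times B)\times B)$ at the scaling-critical level, so the estimate closes globally. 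Uniqueness follows from a difference estimate in a lower norm.

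Next I would propagate the $\dot{H}^{-1}$ norm of $B^\sigma$: testing the magnetic equation against $\Lambda^{-2}B^\sigma$ produces the good-signed dissipation $-\tfrac1\sigma\|B^\sigma\|_{\dot{H}^{\beta_c-1}}^2$, while the advective and Hall terms are controlled by the $H^s$ bounds already obtained (using $H^s\hookrightarrow L^\infty$ for $s>\tfrac32$), so Gr\"onwall gives $B^\sigma\in L^\infty(0,t;\dot{H}^{-1})$ with the stated constant. This makes $A^\sigma=(-\Delta)^{-1}\nabla\times B^\sigma$ and the helicity $\int A^\sigma\cdot B^\sigma\,dx$ well-defined. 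Writing the induction equation as $\partial_t B^\sigma=\nabla\times\bigl[v^\sigma\times B^\sigma-\tfrac1\sigma(-\Delta)^{\beta_c}A^\sigma-\tfrac{\kappa}{\sigma}(\nabla\times B^\sigma)\times B^\sigma\bigr]$ and uncurling, the helicity derivative becomes, after using $B\cdot(v\times B)=B\cdot((\nabla\times B)\times B)=0$ and $\mathrm{div}\,B=0$ to annihilate every term except the resistive one, $\frac{d}{dt}\int A^\sigma\cdot B^\sigma\,dx=-\frac{2}{\sigma}\int(-\Delta)^{\beta_c/2}A^\sigma\cdot(-\Delta)^{\beta_c/2}B^\sigma\,dx$, whence $\bigl|\tfrac{d}{dt}\int A^\sigma\cdot B^\sigma\,dx\bigr|\le\tfrac2\sigma\|B^\sigma\|_{\dot{H}^{\beta_c-1/2}}^2=\tfrac2\sigma\|B^\sigma\|_{\dot{H}^{5/4}}^2$, since $|\widehat{A^\sigma}(\xi)|=|\widehat{B^\sigma}(\xi)|/|\xi|$.

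To pass $\sigma\to\infty$ I would interpolate $\|B^\sigma\|_{\dot{H}^{5/4}}\le\|B^\sigma\|_{L^2}^{2/7}\|B^\sigma\|_{\dot{H}^{7/4}}^{5/7}$ and apply H\"older in time, so that with the uniform $L^2$ bound and $\tfrac1\sigma\int_0^t\|B^\sigma\|_{\dot{H}^{7/4}}^2\,d\tau\le\|(v^\sigma_0,B^\sigma_0)\|_{L^2}^2$ one gets $\tfrac1\sigma\int_0^t\|B^\sigma\|_{\dot{H}^{5/4}}^2\,d\tau\lesssim\sigma^{-2/7}t^{2/7}\to0$. Hence the helicity stays within $o(1)$ of its initial value uniformly on compact time intervals. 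Combining this with convergence of the initial helicity, which follows from the interpolation $\|B^\sigma_0-B_0\|_{\dot{H}^{-1/2}}\le\|B^\sigma_0-B_0\|_{\dot{H}^{-1}}^{1/2}\|B^\sigma_0-B_0\|_{L^2}^{1/2}\to0$ and continuity of the bilinear helicity form on $\dot{H}^{-1/2}$, yields \eqref{MHC}. Finally, the elementary estimate $\int A^\sigma\cdot B^\sigma\,dx\le\|B^\sigma\|_{\dot{H}^{-1/2}}^2$ gives, upon taking $\liminf_{\sigma\to\infty}$ (using \eqref{MHC}) and then $\liminf_{t\to\infty}$ of a $t$-independent lower bound, the inequality \eqref{MHC_1}.

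I expect the \textbf{main obstacle} to be the critical global $H^s$ estimate in the first step: the Hall term loses precisely the number of derivatives supplied by the critical dissipation, so the nonlinear bound is borderline and tolerates no slack, forcing a careful Littlewood--Paley bookkeeping (or a faithful adaptation of \cite{Wan_2015}). By contrast, the helicity limit itself is robust, since its only quantitative input is the $\sigma$-uniform $L^2$ energy bound, which renders the resistive helicity flux $O(\sigma^{-2/7})$; the $\dot{H}^{-1}$ propagation is needed only to give meaning to $A^\sigma$ and to transfer $B^\sigma_0\to B_0$ into convergence of the initial helicity.
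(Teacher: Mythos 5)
Your proposal is correct and follows essentially the same route as the paper: a truncated/regularized approximation closed by the $L^2$ energy identity, a critical $H^s$ estimate in the spirit of \cite{Wan_2015}, and a $\dot H^{-1}$ propagation to make $A^\sigma$ and the helicity well-defined, followed by uncurling the induction equation so that only the resistive flux survives and interpolating $\|B^\sigma\|_{\dot H^{5/4}}\le\|B^\sigma\|_{L^2}^{2/7}\|B^\sigma\|_{\dot H^{7/4}}^{5/7}$ against the $\sigma$-uniform energy bound to obtain the $O(\sigma^{-2/7})$ helicity drift. This matches the paper's Steps 1--4, including the exact rate and the final $\dot H^{-1/2}$ lower bound for \eqref{MHC_1}.
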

	
	\begin{remark} \label{re-MHC} We add some comments to Theorems \ref{theo-MH} and \ref{theo-C-HMHD} as follows: 
		\begin{enumerate}
			\item As mentioned previously, that the additional negative regularity condition of $(E_0,B_0)$ is needed since we are in the whole space $\mathbb{R}^3$, see an example of being not well-defined of the magnetic helicity in \cite{Faraco-Lindberg-Szekelyhidi_2021}. Formally, the main additional condition for the conservation of the magnetic helicity to \eqref{NSM-GO} is the GWP for large initial data, while to \eqref{HMHD}, that is the critical fractional Laplacian exponents. Although \eqref{HMHD} with $\alpha = \beta = 1$ does not admit a scaling invariant property, in three dimensions $\alpha_c$ is the critical exponent to the usual Navier-Stokes system coming from the scaling $v_\lambda(t,x) \mapsto \lambda v(\lambda^2 t, \lambda x)$ and $\beta_c$ is the one to the Hall equations under the scaling $B_\lambda(t,x) \mapsto B(\lambda^2 t,\lambda x)$ for $\lambda > 0$. Finally, the results here are also  formally related to the Taylor's conjecture on the magnetic helicity conservation of \eqref{MHD} with $\alpha = \beta = 1$, see more details in the next section below.
		\end{enumerate}
	\end{remark}

	%
	\subsubsection{Asymptotic stability}
	%
	
	In this part, our aim is to investigate the stability of \eqref{NSM-GO} in the case of $\alpha = 0$ and $\nu > 0$. More precisely, let us now focus on the stability issue of \eqref{NSM-GO} around its stationary states. In this case, if we look for a zero-velocity steady solution, i.e., $(v^* = 0,E^*,B^*,\pi^*_1,\pi^*_2)$ then it should satisfy
	\begin{equation} \label{S-NSM-GO} \tag{S-NSM-GO}
		\left\{
		\begin{aligned}
			\nabla \pi^*_1 &= j^* \times B^*, 
			\quad
			\nabla \times B^* = j^*,
			\quad
			\nabla \times E^* = 0,
			\\
			j^* &= -\kappa (j^* \times B^*) + \sigma(-\nabla \pi^*_2 + cE^*),
			\\
			\textnormal{div}\, B^* &= \textnormal{div}\, E^* = \textnormal{div}\, j^* = 0.
		\end{aligned}
		\right.
	\end{equation}
	Indeed, by using the following well-known identity
	\begin{equation*}
		j^* \times B^* = (\nabla \times B^*) \times B^* = B^* \cdot \nabla B^* - \frac{1}{2} \nabla |B^*|^2,
	\end{equation*}
	it follows that $B^*$ also satisfies the following stationary Euler-type equations, which is also known as the magnetohydrostatic system 
	\begin{equation} \label{MHS} \tag{MHS}
		B^* \cdot \nabla B^* + \nabla p^* = 0 \qquad \text{and}\qquad \textnormal{div}\, B^* = 0 \qquad \text{where}\qquad p^* := -\frac{1}{2}|B^*|^2 - \pi^*_1.
	\end{equation}
	In three dimensions, solutions to \eqref{MHS} either in bounded domains or on the torus are recently constructed in \cite{Constantin-Pasqualotto_2023} as infinite time limits of Voigt approximations\footnote{That means $(\partial_t v,\partial_t B)$ is replaced by $(\partial_t (-\Delta)^{\alpha_0} v, \partial_t (-\Delta)^{\beta_0} B)$ for some $\alpha_0,\beta_0 > 0$.} of viscous and non-resistive \eqref{MHD} (i.e., with $\alpha = 1$ and $\sigma = \infty$). It is also believed that \eqref{MHS} plays an important role in connection to the design of nuclear fusion devices such as tokamaks and stellarators. There are several examples of $(v^*,E^*,B^*,\pi^*_1,\pi^*_2)$ to either \eqref{S-NSM-GO} or \eqref{MHS} such as for $x \in \mathbb{R}^d$
	\begin{align*}
		v^* &= E^* = 0, \quad  B^*(x) = \text{constant vector in } \mathbb{R}^3, \quad \pi^*_1 = \text{constant} \quad \text{and}\quad  \pi^*_2 = \text{constant};
		\\
		v^* &= E^* = 0, \quad B^*(x) \in \{(-x_1,x_2,0),(x_2,x_1,0),...\}, \quad \pi^*_1 = \text{constant} \quad \text{and}\quad  \pi^*_2 = \text{constant}.
	\end{align*}
	By setting
	\begin{equation*}
		\bar{v} := v + v^* = v, \quad \bar{E} := E + E^*, \quad \bar{B} := B + B^*,  \quad  \bar{\pi}_1 := \pi_1 + \pi^*_1 \qquad \text{and} \qquad \bar{\pi}_2 := \pi_2 + \pi^*_2,
	\end{equation*}
	it can be seen from \eqref{NSM-GO} for $(\bar{v},\bar{E},\bar{B},\bar{\pi}_1,\bar{\pi}_2,\bar{j})$ and \eqref{S-NSM-GO} that the perturbation $(v,E,B,\pi_1,\pi_2)$ satisfies
	\begin{equation} \label{NSM-GO*} \tag{NSM-GO*}
		\left\{
		\begin{aligned}
			\partial_t v + v \cdot \nabla v &= -\nabla (\pi_1 + \pi^*_1)  -\nu (-\Delta)^\alpha v + \bar{j} \times (B + B^*), 
			\\
			\frac{1}{c}\partial_t E - \nabla \times (B + B^*)  &= -\bar{j},
			\\
			\frac{1}{c}\partial_t B + \nabla \times E &= 0,
			\\
			\bar{j} + \kappa \bar{j} \times (B + B^*) &= \sigma(- \nabla (\pi_2 + \pi^*_2) + c(E + E^*) + v \times (B + B^*)), 
			\\
			\textnormal{div}\, v = \textnormal{div}\, B = \textnormal{div}\, E &= \textnormal{div}\, \bar{j} = 0,
		\end{aligned}
		\right.
	\end{equation}
	with the initial data is denoted by $(v,E,B)_{|_{t=0}} = (v_0,E_0,B_0)$. The above system has new terms compared to \eqref{NSM-GO}. Thus, in order to have a better understanding, we now focus on the case that $B^*$ is a constant vector and $\alpha = 0$, which seems to be the easiest case to start. The case of either $\alpha = 1$ or $B^*$ is not a constant vector, seems to be much more complicated and requires new ideas. In addition, we also have a comparison with \eqref{HMHD} as $c \to \infty$. More precisely, if $(v^* = 0,B^*,p^*)$ is a stationary solution to \eqref{HMHD} with $\alpha = 0$ and $\beta = 1$ then 
	\begin{equation} \label{S-H-MHD} \tag{S-H-MHD}
		\nabla \pi^* = j^* \times B^*,
		\quad
		\frac{1}{\sigma}\Delta B^* = \frac{\kappa}{\sigma} \nabla \times (j^* \times B^*),\quad  j^* := \nabla \times B^*
		\quad \text{and}\quad
		\textnormal{div}\, B^* = 0.
	\end{equation}
	As mentioned previously, if $B^*$ is a solution to \eqref{S-H-MHD} then $B^*$ also satisfies \eqref{MHS}. Note that the above examples also satisfy \eqref{S-H-MHD}. Moreover, it follows from \eqref{HMHD} for $(\bar{v},\bar{B},\bar{\pi},\bar{j})$ with $\alpha = 0$, $\beta = 1$ and \eqref{S-H-MHD} that the perturbation $(v := \bar{v}-v^*,B := \bar{B}-B^*,\pi := \bar{\pi}-\pi^*,j := \bar{j}-j^*)$ with $j := \nabla \times B$ satisfies
	\begin{equation} \label{HMHD*} \tag{H-MHD*}
		\left\{
		\begin{aligned}
			\partial_t v + v \cdot \nabla v + \nabla \pi &= -\nu v + j \times (B+B^*) + j^* \times B,
			\\
			\partial_t B - \nabla \times (v \times (B+B^*)) &= \frac{1}{\sigma}\Delta B - \frac{\kappa}{\sigma} \nabla \times (j \times (B+B^*)) - \frac{\kappa}{\sigma} \nabla \times (j^* \times B),
			\\
			\textnormal{div}\, v = \textnormal{div}\, B &= 0.
		\end{aligned}
		\right.
	\end{equation}
	
	We are now going to the statement, which is given as follows.
	
	\begin{theorem}[\textnormal{Asymptotic stability around a constant magnetic field}] \label{theo_stability}
		Let $\alpha = 0$, $d \in \{2,3\}$, $c,\kappa, \nu,\sigma > 0$. Assume that $\Gamma_0 := (v_0,E_0,B_0) \in H^s(\mathbb{R}^d)$ with $s \in \mathbb{R}, s > \frac{d}{2} + 1$ and $\textnormal{div}\, v_0 = \textnormal{div}\, B_0 = 0$. In addition, assume that $B^*$ be a constant vector in $\mathbb{R}^3$. There exists a positive constant $C_1 = C_1(B^*,d,\kappa,\nu,\sigma,s)$ such that if $C_1\|\Gamma_0\|_{H^s} \leq 1$ then the following statements hold.
		\begin{enumerate}
			\item[\textnormal{(}i\textnormal{)}] \textnormal{(Global well-posedness).} \eqref{NSM-GO*} admits a global solution $\Gamma := (v,E,B)$ such that for $s'' \in [0,s]$ and $s' \in [1,s]$ 
			\begin{align*}
				\Gamma \in L^\infty(0,\infty;H^s) \cap L^2(0,\infty;H^s \times \dot{H}^{s''} \times \dot{H}^{s'}) \quad \text{and} \quad \bar{j} &\in L^2(0,\infty;H^s).
			\end{align*}
			and for $t > 0$
			\begin{align*}
				\|\Gamma(t)\|^2_{H^s} +  \int^t_0 \|v\|^2_{H^s} + 
				\|\bar{j}\|^2_{H^s} \,d\tau &\leq C(B^*,d,\kappa, \nu, \sigma,s) \|\Gamma_0\|^2_{H^s},
				\\
				\int^t_0 \|E\|^2_{\dot{H}^{s''}} + \|B\|^2_{\dot{H}^{s'}}  \,d\tau &\leq C(B^*,c,d,\kappa, \nu, \sigma,s) \|\Gamma_0\|^2_{H^s}.
			\end{align*}
			
			\item[\textnormal{(}ii\textnormal{)}] \textnormal{(Large-time behavior).} As $t \to \infty$, for $s'_0 \in [1,s)$, $s''_0 \in [0,s)$, $p \in (2,\infty]$ and $q \in [1,\infty]$
			\begin{equation*}
				\|(v,E,\bar{j})(t)\|_{H^{s''_0}}, \|B(t)\|_{L^p}, \|B(t)\|_{\dot{H}^{s'_0}} \quad \text{and} \quad \|B(t)\|_{L^q_{\textnormal{loc}}} \to 0. 
			\end{equation*}
			
			\item[\textnormal{(}iii\textnormal{)}] \textnormal{(The limit as $c \to \infty$).} 
			Let $c > 0$ and $\Gamma^c_0 := (v^c_0,E^c_0,B^c_0) \in H^s$ satisfying $\textnormal{div}\, v^c_0 =  \textnormal{div}\, B^c_0 = 0$, $C_1\|\Gamma^c_0\|_{H^s} \leq 1$ uniformly in terms of $c$, and
			\begin{equation*}
				\Gamma^c_0 \rightharpoonup (\bar{v}_0,\bar{E}_0,\bar{B}_0) \quad \text{in } \, H^s \text{ as } c \to \infty
			\end{equation*}
			for some $(\bar{v}_0,\bar{E}_0,\bar{B}_0) \in H^s$ with $\textnormal{div}\, \bar{v}_0 = \textnormal{div}\, \bar{B}_0 = 0$. Then, there exists a sequence of global solutions $\Gamma^c := (v^c,E^c,B^c)$ to \eqref{NSM-GO*} with $\alpha = 0$ and $\Gamma^c_{|_{t=0}} = \Gamma^c_0$ given as in Part $(i)$. In addition,  up to an extraction of a subsequence, $(v^c,B^c)$ converges to $(v,B)$ in the sense of distributions as $c \to \infty$, where
			$(v,B)$ satisfies \eqref{HMHD*} with $(v,B)_{|_{t=0}} = (\bar{v}_0,\bar{B}_0)$.
		\end{enumerate}
		
	\end{theorem}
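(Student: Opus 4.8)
The plan is to treat \eqref{NSM-GO*} as a small perturbation of the constant state $B^*$ and to run the scheme of Theorem~\ref{theo_global_small}, now using that the choice $\alpha=0$ turns $-\nu(-\Delta)^\alpha v$ into the velocity damping $-\nu v$. First I would construct approximate solutions $\Gamma^n=(v^n,E^n,B^n)$ by the same linearized iteration employed for \eqref{NSM-GO}, inverting the implicit Ohm's law at each step via $\bar{j}^n=M^{-1}(\kappa,B^{n-1}+B^*)\,\sigma(-\nabla(\pi_2+\pi_2^*)+c(E^n+E^*)+v^n\times(B^{n-1}+B^*))$, which is legitimate because $\det M=\kappa^2|B+B^*|^2+1>0$. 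The core is a uniform $H^s$ energy estimate: testing the velocity equation in $H^s$ produces the coercive damping term $\nu\|v\|_{H^s}^2$, and testing Ohm's law yields $\tfrac1\sigma\|\bar{j}\|_{H^s}^2$ once the skew-symmetric Lorentz pairings $\bar{j}\times(B+B^*)$ and $v\times(B+B^*)$ cancel against the Maxwell pairings. Since $B^*$ is constant we have $\nabla\times B^*=0$, so the only background contributions are the zeroth-order couplings $\bar{j}\times B^*$ and $v\times B^*$, absorbed by smallness; the convection term $v\cdot\nabla v$ is handled by a Kato--Ponce commutator estimate, for which the hypothesis $s>\tfrac d2+1$ is exactly what is needed (as in the inviscid case of Theorem~\ref{theo_local}), and a continuation argument closes the estimate for small data.

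The delicate step, which I expect to be the main obstacle, is extracting dissipation for $B$: Faraday's law carries no explicit diffusion. Here I would transfer the already-controlled quantity $\bar{j}$ into control of $B$ through Ampère's law $\nabla\times B=\bar{j}+\tfrac1c\partial_t E$. Differentiating Faraday's law in time and substituting Ampère's law gives the telegraph-type equation $\tfrac1c\partial_t^2 B-c\Delta B=c\,\nabla\times\bar{j}$, whose energy structure, combined with the bound on $\bar{j}$, yields the $L^2_t\dot H^{s'}$ estimate for $B$ ($s'\in[1,s]$) and the companion $L^2_t\dot H^{s''}$ estimate for $E$ ($s''\in[0,s]$); because $E$ and $\partial_t E$ enter with factors of $c$, this bound is only $c$-dependent, exactly as recorded in the statement, whereas the estimate for $\|\Gamma\|_{H^s}$, $v$ and $\bar{j}$ stays $c$-independent. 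Passing to the limit in $n$ then produces the global solution of Part $(i)$.

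For Part $(ii)$ I would use the integrability-plus-uniform-continuity criterion. Part $(i)$ gives $v,E,\bar{j}\in L^2(0,\infty;H^s)$ (taking $s''=0$ and $s''=s$) and $B\in L^2(0,\infty;\dot H^{s'})$ for $s'\in[1,s]$, while the equations bound $\tfrac{d}{dt}\|\cdot\|^2$ for these norms, so each is uniformly continuous in $t$ and its finite time-integral forces it to vanish as $t\to\infty$. Interpolating against the uniform $H^s$ bound gives $\|(v,E,\bar{j})(t)\|_{H^{s''_0}}\to0$ for $s''_0<s$ and $\|B(t)\|_{\dot H^{s'_0}}\to0$ for $s'_0\in[1,s)$. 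Only homogeneous norms of $B$ of order at least one decay (the $L^2$ norm of $B$ need not be time-integrable), so the statements for $B$ are expressed through $\|B\|_{L^p}$ with $p>2$ and $\|B\|_{L^q_{\mathrm{loc}}}$: these follow from Gagliardo--Nirenberg $\|B\|_{L^p}\lesssim\|B\|_{L^2}^\theta\|B\|_{\dot H^{s'}}^{1-\theta}$ (choosing $s'\in(\tfrac d2,s)$ for $p=\infty$) together with the uniform $L^2$ bound and the vanishing of $\|B\|_{\dot H^{s'}}$, and from Hölder on bounded sets for $q\le2$.

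Finally, Part $(iii)$ follows the template of Theorem~\ref{theo_global_small}-$(ii)$. The $c$-independent bounds give weak-$*$ limits of $v,B$ in $L^\infty_tH^s$ and of $\bar{j}$ in $L^2_tH^s$; writing Ohm's law as $cE=\tfrac1\sigma[\bar{j}+\kappa\,\bar{j}\times(B+B^*)]+\nabla(\cdots)-v\times(B+B^*)$ shows $cE$ is bounded uniformly in $c$, so $E=O(1/c)\to0$ while $\partial_t B=-\nabla\times(cE)$ and $\partial_t v$ remain uniformly bounded, which furnishes temporal compactness. An Aubin--Lions argument then yields strong convergence of $v,B$ in $L^2_{\mathrm{loc}}$ of a space below $H^s$, enough to pass the quadratic terms $\bar{j}\times(B+B^*)$ and $v\times(B+B^*)$ to the limit, while Ampère's law forces $\bar{j}\rightharpoonup\nabla\times B$ (as $\tfrac1c\partial_t E\to0$ in $\mathcal D'$). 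Substituting this identity into the limits of the velocity equation and of Faraday-plus-Ohm, and using $\nabla\times\nabla\times B=-\Delta B$ for the diffusion term together with $j^*=\nabla\times B^*=0$, reproduces exactly \eqref{HMHD*} for $(v,B)$, completing the proof.
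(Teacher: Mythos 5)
Your overall architecture matches the paper's: a Fourier-truncated linearized iteration, coercivity from the damping term $-\nu v$, a commutator estimate for the convection term under $s>\frac d2+1$, a cross-term identity to extract $L^2_t$ dissipation for $(E,B)$ from the Maxwell block, the integrability-plus-uniform-continuity criterion for Part $(ii)$, and Aubin--Lions for Part $(iii)$. Two points deserve comment.

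First, the one step that does not work as written is your inversion of the generalized Ohm's law via $\bar{j}^n=M^{-1}(\kappa,B^{n-1}+B^*)\,\sigma(-\nabla(\pi_2+\pi_2^*)+\cdots)$. The positivity of $\det M$ lets you invert the algebraic part, but $\pi_2$ is not given data: it is determined by the constraint $\textnormal{div}\,\bar{j}^n=0$, which after your inversion becomes a variable-coefficient elliptic equation $\textnormal{div}(M^{-1}\nabla\pi_2)=\textnormal{div}(M^{-1}(\cdots))$ coupled to the iterate $B^{n-1}$. This is exactly the obstruction the paper isolates in Section 1.2--C (``we still have the appearance of $\pi_2$, which in fact implicitly depends on $j$''), and it is the reason the paper's iteration \eqref{NSM-GO*_n} instead \emph{lags the current in the Hall term}, setting $\bar{j}^n=-\kappa\mathbb{P}\mathcal{T}_{\overline{n}}(\bar{j}^{n-1}\times(B^n+B^*))+\sigma(cE^n+\mathbb{P}\mathcal{T}_{\overline{n}}(v^n\times(B^{n-1}+B^*)))$, so that $\bar{j}^n$ is an explicit expression and no elliptic inversion is needed. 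Your route is probably salvageable (the symmetric part of $M^{-1}$ is uniformly positive definite, so the elliptic problem is solvable, and $M^{-1}$ can be controlled in $H^s$ for $s>\frac d2$ by composition estimates), but as stated you have skipped the step that the authors identify as the central structural difficulty of \eqref{NSM-GO} versus \eqref{NSM}; either supply the elliptic analysis or adopt the lagged-current iteration.

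Second, your telegraph equation $\frac1c\partial_t^2B-c\Delta B=c\,\nabla\times\bar{j}$ is correct, but its natural energy (test against $\partial_tB$) only yields $L^\infty_t\dot H^1$ control of $B$, not the $L^2_t\dot H^{s'}$ bound claimed in Part $(i)$. To get time-integrated dissipation you must test against $B$ itself, and the resulting identity --- $c\|\nabla B\|^2_{L^2_tL^2}=-\frac1c\frac{d}{dt}\int\partial_tB\cdot B+c\|\nabla\times E\|^2_{L^2_tL^2}+c\int\nabla\times\bar{j}\cdot B$ --- is precisely the paper's computation obtained by testing the Maxwell pair with $-(\nabla\times B,\nabla\times E)$ and tracking $\frac{d}{dt}\int E\cdot\nabla\times B$, supplemented by a separate bound on $\|E\|_{L^2_t\dot H^{s''}}$. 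So this part of your argument is equivalent to the paper's once made precise; just be aware that the naive energy of the telegraph equation is not enough. The remainder of Parts $(ii)$ and $(iii)$ (interpolation against the uniform $H^s$ bound, the identification $\bar{j}\rightharpoonup\nabla\times B$ from Amp\`ere's law as $\frac1c\partial_tE\to0$, and $j^*=\nabla\times B^*=0$) agrees with the paper; for the decay of $\bar{j}$ it is cleaner to use the pointwise-in-time bound $\|\bar{j}(t)\|_{L^2}\lesssim(1+\|B(t)\|_{H^s})\|(v,E)(t)\|_{L^2}$ from Ohm's law, as the paper does, rather than trying to establish uniform continuity of $t\mapsto\|\bar{j}(t)\|_{L^2}^2$, which would require differentiating the implicit relation.
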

	
	\begin{remark}
		We add some comments to Theorem \ref{theo_stability} as follows:
		\begin{enumerate}
			\item  Moreover, it can be seen from \eqref{S-NSM-GO} that $\Delta B^* = 0$, and furthermore by Liouville's theorem (see \cite{Evans_2010}), if $B^*$ is bounded then $B^*$ is a constant vector. Thus, the boundedness of $B^*$ is equivalent to the constant one. In addition, if $B^*$ is a constant vector in $\mathbb{R}^3$ then  $cE^* - \nabla \pi^*_2  = \nabla \pi^*_1 = 0$.
			
			\item As done previously in \cite{KLN_2024_2}, if we assume in addition that $(v_0,B_0) \in L^1(\mathbb{R}^d)$ then we can obtain an explicit rate of decay in time for solution $(v,B)$ to \eqref{HMHD*}. However, it is not clear for us at the moment, how we can obtain a similar thing in the case of  \eqref{NSM-GO*}. 
			
			\item The stability of \eqref{NSM-GO} near $B^*$ in the case either $\alpha = 1$ or $B^*$ is not a constant vector is much more complicated, where we should either control $L^2_tL^\infty_x$ norm of the velocity with $L^2_tH^s_x$ right-hand side or define a suitable weighted functional setting, which is at the moment not so clear to us.
		\end{enumerate}
	\end{remark}
	
	The rest of the present paper is organized as follows. Before going to the proofs of the stated theorems, we will briefly review strategy of proofs in Section \ref{sec:stra}. After that the proofs of Theorems \ref{theo_global_small}, \ref{theo_local}, \ref{theo-MH}, \ref{theo-C-HMHD} and \ref{theo_stability} will be provided in Sections \ref{sec:global_small}, \ref{sec:local}, \ref{sec:theo-MH}, \ref{sec:theo-C-HMHD} and \ref{sec:stability}, respectively. In addition, some technical tools and results are recalled  and proved in Appendix given in Section \ref{sec:app}.
	
	%
	\section{Strategy of proofs} \label{sec:stra}
	%
	
	In this section, we aim to provide briefly strategy of proofs in Theorems \ref{theo_global_small}, \ref{theo_local}, \ref{theo-MH}, \ref{theo-C-HMHD} and \ref{theo_stability}. Roughly speaking, the proofs of  Theorems Theorems \ref{theo_global_small}, \ref{theo_local}, and \ref{theo_stability} mainly rely on the analysis of the following iteration system, which is corresponding to the Leray projection\footnote{Here, The usual Leray projection into divergence-free vectors is formally defined by $\mathbb{P}(f) := f + \nabla(-\Delta)^{-1} \text{div} \,f$.} version of \eqref{NSM-GO} and is given as follows for each $n \in \mathbb{N}$
	\begin{equation} \label{NSM-GO_na} 
		\left\{
		\begin{aligned}
			\partial_t v^n + \mathbb{P}\mathcal{T}_{\overline{n}}(v^{n-1} \cdot \nabla v^n) &= -\nu(-\Delta)^\alpha v^n + \mathbb{P}\mathcal{T}_{\overline{n}}(j^n \times B^{n-1}), 
			\\
			\frac{1}{c}\partial_t E^n - \nabla \times B^n &= -j^n,
			\\
			\frac{1}{c}\partial_t B^n + \nabla \times E^n &= 0,
			\\
			\sigma(cE^n + \mathbb{P}\mathcal{T}_{\overline{
					n}}(v^n \times B^{n-1})) &= j^n + \kappa\mathbb{P}\mathcal{T}_{\overline{n}}(j^{n-1} \times B^n),
			\\
			\textnormal{div}\, v^n = \textnormal{div}\, E^n &= \textnormal{div}\, B^n = \textnormal{div}\, j^n = 0,
			\\
			(v^n,E^n,B^n)_{|_{t=0}} &= \mathcal{T}_{\overline{n}}(v_0,E_0,B_0), 
			\\ 		
			(v^0,E^0,B^0,j^0) &= (0,0,0,0),
		\end{aligned}
		\right.
	\end{equation}
	where for $n \in \mathbb{N}$ we define $\bar{n} := 2^n$ and the initial data is denoted by $\Gamma_0 := (v_0,E_0,B_0)$ belonging to suitable spaces. Here, for $m \in \mathbb{N}$, $\mathcal{T}_m$ is the usual Fourier truncation operator.\footnote{More precisely, $\mathcal{F}(\mathcal{T}_m f)(\xi) := \mathbbm{1}_{B_{m}}(\xi) \mathcal{F}(f)(\xi)$, where $\mathbbm{1}_{B_m}$ is the characteristic function of $B_m$ and $B_m$ is the ball of radius $m \in \mathbb{N}$ centered at the origin.} 
	The main reason to introduce this iteration system is that in the Ohm's  in \eqref{NSM-GO}, it seems to us that the electric current field $j$ can not be explicitly expressed only in terms of $(v,E,B)$, which is different compared to that of \eqref{NSM} as for example in \cite{Arsenio-Gallagher_2020,Germain-Ibrahim-Masmoudi_2014,Ibrahim-Keraani_2012,KLN_2024_2,Masmoudi_2010}, where it is not difficult to obtain the local existence and uniqueness of solutions to an approximate system. Thus, in the case of \eqref{NSM-GO}, we may not apply directly the standard Cauchy-Lipschitz theorem (see \cite{Majda_Bertozzi_2002}) to obtain the local existence and uniqueness of approximate solutions. Therefore, we need to linearize the original system in a suitable way, especially for the nonlinear terms, which are related to $j$, for example the Hall one. More precisely, in the rest of this section, we will visit each theorem with reviewing its strategy of proof. 
	
	$\bullet$ We first focus on the proof of Theorem \ref{theo_global_small}-$(i)$ for the initial data $\Gamma_0 \in (H^{s-1} \times H^s \times H^s)(\mathbb{R}^d)$ with $s > \frac{d}{2}$ for $d \in \{2,3\}$ and $\alpha = 1$, where at each level $n \in \mathbb{N}$, we can provide the  existence and uniqueness of approximate solutions $\Gamma^n := (v^{n},E^{n},B^{n}) \in C^1([0,T^{n}_*);H^{s-1}_{n,0} \times H^s_{n,0} \times H^s_{n,0})$ (see the definition of this functional space below) for some $T^{n}_* > 0$ by using the standard Cauchy-Lipschitz theorem. As usual, we then focus on obtaining uniform estimates in terms of $n$, which will allow us to conclude that $T^{n}_* = \infty$ and to control solutions at the next level $n+1$ as well. More precisely, we first focus on the level $n = 1$ with providing some good estimates and proving $T^1_* = \infty$, which will be applied in the next level $n = 2$ to obtain some good estimates and $T^2_* = \infty$. In this level $n = 2$, when we estimate the system in the Sobolev norm, there is a difficulty on the controlling of the velocity in the $L^2_tL^\infty_x$ norm. To overcome this point, we focus on the Navier-Stokes equations by considering its as a heat equation with the convection term and Lorentz force on the right-hand side. Surprisingly, the two-dimensional case is more complicated than that of the case in three dimensions, where we are not able to apply some paraproduct estimate, since it is the critical case in two dimensions. Therefore, in the case $d = 2$, by using the ideas in \cite{Germain-Ibrahim-Masmoudi_2014,Ibrahim-Keraani_2012}, we decompse the velocity in suitable form and apply the usual parabolic estimates for heat equation. These will allows us to control the velocity either in the $L^2_tL^\infty_x$ norm directly by using Duhamel formula with the bounds in the $L^1_t\dot{H}^{\frac{d}{2}-1}_x$ norm of some terms on the right-hand side of the corresponding heat equation or in the $L^2_t\dot{B}^\frac{d}{2}_{2,1,x}$ norm by using maximal regularity type estimates for the heat equation in Besov spaces. Next, we use some paraproduct estimates in Lemma \ref{lem_paraproduct}, which are slightly different to those of in \cite{Germain-Ibrahim-Masmoudi_2014,Ibrahim-Keraani_2012}. In addition, we also need some suitable bound of $(E,B)$ in the $L^2_t\dot{H}^1_x$ norm in the case of $d = 2$, which can be done by using a non usual test function for the Maxwell system. In the case $d = 3$, we also decompose the Navier-Stokes system in a simpler way, where it is possible to apply the Duhamel formula and the usual paraproduct inequality directly. Under using the smallness assumption of the initial data, we are able to close the main estimate at this level $n = 2$. We then use an induction argument in which we assume some good properties (which have been established in the level $n = 2$) hold in some level $n = k > 2$ with $T^k_* = \infty$, $k \in \mathbb{N}$, and we need to prove that these are also true in the next level $n = k+1$ and $T^{k+1}_* = \infty$. In this level $n = k + 1$, we can repeat the proof in the level $n = 2$ by using the induction assumption, since we have a similar form of the two approximate systems.  After obtaining good information at every level $n \in \mathbb{N}$, we also need to consider the limit as $n \to \infty$, where we expect to obtain the original system \eqref{NSM-GO} from \eqref{NSM-GO_na}. In order to pass to the limit as $n \to \infty$, for example it is needed to show that this approximate solution $\Gamma^n$ and $j^n$ are Cauchy sequences in some particular sense, where we need to consider the system for the difference with defining its suitable energy form and use the smallness of the initial data. This Cauchy property will allow us to provide that \eqref{NSM-GO_na} strongly converges to \eqref{NSM-GO} in a suitabe sense. Finally, the uniqueness of solutions is established in the usual way as well. In addition, the proof of the second part, i.e., Theorem \ref{theo_global_small}-$(ii)$, follows the idea in \cite{Arsenio-Gallagher_2020} with using uniform bounds in terms of $c$, which are obtained as in Theorem \ref{theo_global_small}-$(i)$. This provides us a connection  between \eqref{NSM-GO} and \eqref{HMHD} with $\alpha = \beta = 1$ as $c \to \infty$. While Theorem \ref{theo_global_small}-$(iii)$ gives us a connection between \eqref{NSM-GO} and \eqref{NSM-SO} with $\alpha = 1$ in which the difference is computed in terms of the Hall constant $\kappa$. Here, the idea of the proof is similar to the proof of inviscid limit from Navier-Stokes to Euler equations.
	
	$\bullet$ Similarly, we also apply partially the above strategy to the proof of Theorem \ref{theo_local}, where we investigate the inviscid case, i.e., $\nu = 0$ for the initial data $\Gamma_0 \in H^s(\mathbb{R}^d)$ with $s > \frac{d}{2} + 1$ and $d \in \{2,3\}$. Since we do not have the diffusion term in this case, then the main part in the previous theorem can not be obtained. In fact, we do not need to follow the idea in the proof of the local well-posedness to the Euler equations as given in \cite{Majda_Bertozzi_2002}, since at each level the approximate system is linear. By choosing the time existence to be small enough, we are able to control the approximate solutions $\Gamma^n$ at every level $n \in \mathbb{N}$. However, we can not follow the idea in the proof of Theorem \ref{theo_global_small} due to some technical difficulties, to prove some Cauchy properties, which allows us to pass to the limit as $n \to \infty$. Thus, we first have to pass to the limit in the sense of distibutions and then prove the uniqueness of solutions in the usual way, since the limiting solution is smooth enough. Furthermore, the proof of the inviscid limit is similar to that of from the Navier-Stokes to the Euler equations. Similarly, the limit as either $c \to \infty$ or $\kappa \to 0$ can be done as in the proofs of Theorem \ref{theo_global_small}-$(ii)-(iii)$.
	
	$\bullet$ Next, the proof of Theorem \ref{theo_stability} almost follows the main idea as that of in Theorem \ref{theo_global_small} in which we study the asymptotic stability of \eqref{NSM-GO} with $\alpha = 0$ around a constant (or equivalently bounded) magnetic field $B^*$ for the initial data $\Gamma_0 \in H^s(\mathbb{R}^d)$ with $s > \frac{d}{2} + 1$ and $d \in \{2,3\}$ (see \cite{KLN_2024_2} in the case of \eqref{NSM}). In this case, there are additonal term which are related to $B^*$. Since $B^*$ is constant vector in $\mathbb{R}^3$, we will have some good cancellation properties. However, it seems difficult to control all these new terms by using the Laplacian of the velocity with the strategy of decomposing the Navier-Stokes equations as in Theorem \ref{theo_global_small}. Thus, we first force ourselves to the case of having a velocity damping term. In this case, it allows us to close the main estimate by using the smallness assumption of the initial data. Moreover, to investigate the large-time behavior, we can do in a similar way as in \cite{KLN_2024_2} for $(v,E,\bar{j})$ in the $L^2$ norm and then by using interpolation inequalities and uniform bounds to get similar things in higher Sobolev norms. But to obtain the asymptotic behavior for the magnetic field $B$, new ideas should be considered here (compared to \cite{KLN_2024_2} in the case of \eqref{NSM}). Somehow, we need to obtain the controlling of $B$ in the $L^2_t\dot{H}^1_x$ norm. In order to do that, we test the Maxwell esuations by $-(\nabla \times B,\nabla \times E)$ and use the bound of $E$ in the $L^2_t\dot{H}^1_x$ norm, which can be obtained in an easier way. Then, interpolation and Sobolev inequalities allow us to obtain the decay in time of $B$ in higher homogeneous Sobolev norms and in the $L^p$ norm for $p \in (2,\infty]$ as well. As a consequence, we also obtain the large-time behavior of $B$ in the $L^2_{\textnormal{loc}}$ norm. However, it is not clear to us at the moment, how we can investigate the decay of the magnetic field in the $L^2$ norm. Finally, the proof of the limit as $c \to \infty$ is similar to that of Theorem \ref{theo_global_small}-$(ii)$.
	
	$\bullet$ Finally, it turns out to the proofs of Theorems \ref{theo-MH} and \ref{theo-C-HMHD}, where the motivation mainly comes from Theorem \ref{theo_global_small}-$(iii)$ in which \eqref{NSM-GO} converges weakly to \eqref{HMHD} with the magnetic resistivity $\frac{1}{\sigma}$ (or \eqref{NSM} converges weakly to \eqref{MHD} in \cite{Arsenio-Gallagher_2020,KLN_2024_2}) as the speed of light tends to infinity. It is well-known that (in bounded domain or on the three-dimensional torus $\mathbb{T}^3$) if a strong solutions to the ideal \eqref{MHD} (i.e., with $\nu = 0$ and $\sigma = \infty$) exists then it conserves the total energy, the cross helicity and also the magnetic helicity. Therefore, it is also natural to investigate similar things when taking the limit as $\frac{1}{\sigma} \to 0$ or equivalently $\sigma \to \infty$ to either \eqref{NSM} (see \cite{KLN_2024_2}) or \eqref{NSM-GO}. Here, there is connection with the well-known Taylor's conjecture, which is on the magnetic helicity conservation of the limit $(v,b)$ of the corresponding sequence of Leray-Hopf weak solutions $(v^{\nu,\sigma},B^{\nu,\sigma})$ to \eqref{MHD} with $\alpha = \beta = 1$ by investigating the limits as $\nu \to 0$ and $\sigma \to \infty$, which is recently solved in \cite{Faraco-Lindberg_2020,Faraco-Lindberg-MacTaggart-Valli_2022} in bounded domains. See also \cite{Aluie-Eyink_2010,Beekie-Buckmaster-Vicol_2020,Caflisch-Klapper-Steele_1997,Faraco-Lindberg_2018,Kang-Lee_2007} for conditional magnetic helicity conservation for the ideal \eqref{MHD} either in $\mathbb{R}^3$ or on $\mathbb{T}^3$. For the history and further discussions of the conjecture, we refer the reader to recent developments in \cite{Faraco-Lindberg-Szekelyhidi_2022,Faraco-Lindberg-Szekelyhidi_2021,Faraco-Lindberg-Szekelyhidi_2024}. This conjecture formally says that weak ideal limits of Leray-Hopf solutions to \eqref{MHD} (with $\nu, \sigma > 0$) conserve the magnetic helicity when taking the limit as the viscosity  $\nu$ and the magnetic resistivity $\frac{1}{\sigma}$ both go to zero. We note that there is also another version in which the viscosity is fixed and only taking the limit as the magnetic resistivity tends to zero, it is known as the weak
	non-resistive limit (see \cite{Faraco-Lindberg_2020}). Therefore, we will focus on the latter version of the conjecture to \eqref{NSM-GO} and \eqref{HMHD} under suitable assumptions. On the other hand, it can be seen that the energy estimate, which is the only one, at the momment, that it does not depend on $\sigma$, except the $L^2_{t,x}$ norm of $j^{\sigma}$, but  a multiplication of $\frac{1}{\sigma}$ of this norm does. In addition, in the case of either in bounded domains with either homogeneous Dirichlet or no-slip
	and perfect conductivity (see \cite{Faraco-Lindberg_2020,Faraco-Lindberg-MacTaggart-Valli_2022}) boundary conditions, or on the three-dimensional torus with the usual zero mean property, where we usually have a Poincar\'{e} inequality, which ensures the finiteness of the magnetic helicity.  However, it is different in the case of the whole sapce $\mathbb{R}^3$, we need to assume an additional conditions on the initial data such as $(E^{\sigma},B^{\sigma}) \in \dot{H}^{-1}$ (in fact, without this additional negative regularity condition, an example on the ill-definedness of the magnetic helicity in three dimensions was established in \cite{Faraco-Lindberg-Szekelyhidi_2021}) and a corresponding strong convergence of the magnetic initial data from $B^{\sigma}_0$ to $B_0$ in $\dot{H}^{-1}$ as $\sigma \to \infty$ for some suitable $B_0$. It allows us to bound the magnetic helicity in a suitable way. We then use the form of the equations of $B^{\sigma}$ to control the magnetic helicity in a suitable way in terms of good quantities, where we can use the $\sigma$-independent property of the energy estimate to pass to the limit as $\sigma \to \infty$. We note here that for \eqref{NSM-GO}, the GWP in Theorem \ref{theo_global_small}-$(i)$ needs the smallness assumption of the initial data, see \eqref{small_data}, which depends on $\sigma$ in a bab way (it can be seen during the proof). A similar thing happens in Theorem \ref{theo_local}-$(i)$, where the local time existence depends on $\sigma$. Therefore, in both cases, it it not clear to us how we can study the limit as $\sigma \to \infty$. Thus, in Theorem \ref{theo-MH}-$(ii)$, we need to assume that global solutions as in Theorem \ref{theo-MH}-$(i)$ exist with similar properties, but without assuming Condition \eqref{small_data}. After that we can follow the idea in \cite{KLN_2024_2} as in the case of \eqref{NSM} to obtain the conservation of the magnetic helicity. The same situation happens to \eqref{HMHD}, where we can only work with the critical fractional Laplacian exponents $(\alpha_c,\beta_c)$ for both the velocity and magnetic fields in Theorem \ref{theo-C-HMHD}. This choice allows us to obtain the GWP for large initial data and pass to the limit as $\sigma \to \infty$ similarly as in the proof of Theorem \ref{theo-MH}-$(ii)$.
	
	%
	\section{Proof of Theorem \ref{theo_global_small}} \label{sec:global_small}
	%
	
	In this section, we will provide a proof of Theorem	 \ref{theo_global_small}.
	 
	\begin{proof}[Proof of Theorem \ref{theo_global_small}-$(i)$] The proof will be divided into the following several steps. 
		
		\textbf{Step 1: Approximate system, local and global existence.} For each $n \in \mathbb{N}$, we will consider \eqref{NSM-GO_na} with $\alpha = 1$ as follows
		\begin{equation} \label{NSM-GO_n} 
			\frac{d}{dt} \Gamma^n = F^n(\Gamma^n) \qquad \text{and} \qquad \Gamma^n_{|_{t=0}} = \mathcal{T}_{\overline{n}}(\Gamma_0),
		\end{equation}
		where $\Gamma^n := (v^n,E^n,B^n)$, $F^n := (F^n_1,F^n_2,F^n_3)$ with 
		\begin{align*}
			F^n_1 &:=  -\mathbb{P}\mathcal{T}_{\overline{n}}(v^{n-1} \cdot \nabla v^n) + \nu \Delta v^n + \mathbb{P}\mathcal{T}_{\overline{n}}(j^n \times B^{n-1}),
			\\
			F^n_2 &:=	c(\nabla \times B^n - j^n)
			\quad \text{and} \quad F^n_3 := -c\nabla \times E^n.	
		\end{align*} 
		In the sequel, we aim to use an induction argument to prove the following statement: For each $n \in \mathbb{N}$, \eqref{NSM-GO_n} has a global solution $\Gamma^n \in C^1([0,\infty);X^s_n)$, where $X^s_n$ will be defined below, such that for $t > 0$, $s'' \in [0,s]$, $s' \in [1,s]$, $c \geq 1$ and $C = C(d,\kappa,\nu,\sigma,s)$
		\begin{equation} \label{IH}
			\esssup_{\tau \in (0,t)} \|\Gamma^{n}(\tau)\|^2_{X^s} +  \int^t_0 \|\nabla v^{n}\|^2_{H^{s-1}} + \|v^n\|^2_{L^\infty} + \|E^n\|^2_{\dot{H}^{s''}} + \|B^n\|^2_{\dot{H}^{s'}} + \|j^{n}\|^2_{H^s} \,d\tau \leq C \|\Gamma_0\|^2_{X^s}.
		\end{equation}
		In order to do that, we first define the following functional spaces for $s_0 \in \mathbb{R}$ with $s_0 \geq 0$ and for $n \in \mathbb{N}$
		\begin{align*}
			H^{s_0}_n := \left\{f \in H^{s_0}(\mathbb{R}^d) : \text{supp}(\mathcal{F}(f)) \subseteq B_{\overline{n}}\right\} \qquad \text{and} \qquad H^{s_0}_{n,0} := \left\{f \in H^{s_0}_n : \text{div}\, f = 0\right\},
		\end{align*}
		which are equipped with the usual $H^{s_0}$ norm,\footnote{Here, we use the usual notation for  norms in nonhomogeneous Sobolev spaces, i.e., for $s \in \mathbb{R}$, $\|f\|_{H^s} := \|(1+|\xi|^2)^\frac{s}{2}\mathcal{F}(f)\|_{L^2}$. For $s \geq 0$,  $\|f\|^2_{H^s} \approx \|f\|^2_{L^2} + \|f\|^2_{\dot{H}^s}$ with $\|f\|_{\dot{H}^s} := \|\Lambda^s f\|_{L^2}$, where the fractional derivative operator for $s \in \mathbb{R}$, $\Lambda^s := (-\Delta)^\frac{s}{2}$ is defined via Fourier transform by $\mathcal{F}(\Lambda^s f)(\xi) := |\xi|^{s}\mathcal{F}(f)(\xi)$.}  and the following mapping for $s > \frac{d}{2}$
		\begin{equation*}
			F^{n} : X^s_n := H^{s-1}_{n,0} \times H^s_{n,0} \times H^s_{n,0} \to X^s_n
			\qquad \text{with}\qquad 
			\Gamma^{n} \mapsto F^{n}(\Gamma^{n}),
		\end{equation*}
		where the space $X^s_n$ is equipped with the following natural norm
		\begin{equation*}
			\|(f_1,f_2,f_3)\|^2_{X^s_n} := \|f_1\|^2_{H^{s-1}} + \|(f_2,f_3)\|^2_{H^s}.
		\end{equation*}
		Similarly, we also denote the following spaces and their corresponding norms
		\begin{align*}
			X^s &:= H^{s-1} \times H^s \times H^s \qquad \text{with} \qquad \|(f_1,f_2,f_3)\|^2_{X^s} := \|f_1\|^2_{H^{s-1}} + \|(f_2,f_3)\|^2_{H^s},
			\\
			\dot{X}^s &:= \dot{H}^{s-1} \times \dot{H}^s \times \dot{H}^s \qquad \text{with} \qquad \|(f_1,f_2,f_3)\|^2_{\dot{X}^s} := \|f_1\|^2_{\dot{H}^{s-1}} + \|(f_2,f_3)\|^2_{\dot{H}^s}.
		\end{align*}
		In the sequel, at each level $n \in \mathbb{N}$, we aim to provide first the local existence and then the global one to the approximate system \eqref{NSM-GO_n}. We first focus on the level $n = 1$ as follows.
		
		\textbf{Step 1a: The case $n = 1$.} In this case,
		it is not difficult to check that $F^1$ is well-defined and is locally Lipschitz on $X^s_1$. Thus, the usual Cauchy-Lipschitz theorem (see \cite{Majda_Bertozzi_2002}) gives us the existence and uniqueness of solution $\Gamma^1 \in C^1([0,T^1_*);X^s_1)$ for some $T^1_* > 0$ with the following property if $T^1_* < \infty$ then
		\begin{equation*}
			\lim_{t \to T^1_*} \left(\|v^1(t)\|^2_{H^{s-1}} + \|(E^1,B^1)(t)\|^2_{H^s}\right) = \infty.
		\end{equation*}
		Moreover, it can be seen that the $X^s$ estimate of \eqref{NSM-GO_n} with $n = 1$ is given by for $t \in (0,T^1_*)$
		\begin{equation} \label{Gamma_1}
			\|\Gamma^1(t)\|^2_{X^s} + \int^t_0 \nu \|\nabla v^1\|^2_{H^{s-1}} + \frac{1}{\sigma} \|j^1\|^2_{H^s} \,d\tau \leq \|\Gamma_0\|^2_{X^s},
		\end{equation}
		which yields $T^1_* = \infty$. We are going to bound the norms $\|(E,B)\|_{L^2(0,t;\dot{H}^1)}$ and $\|v^1\|_{L^2(0,t;L^\infty)}$ for any $t > 0$, since these bounds will be used in the next substep. It can be seen that by applying $\Lambda^{s-1}$ both sides of the Maxwell equations in \eqref{NSM-GO_n} with $n = 1$, testing the result one by  $-(\Lambda^{s'-1}\nabla \times B^{1},\Lambda^{s'-1}\nabla \times E^{1})$ for $s' \in [1,s]$ and integrating in time for $\tau > 0$
		\begin{align*}
			\int^{\tau}_0 \|\Lambda^{s'-1} \nabla \times B^1\|^2_{L^2} \,dt 
			&= \frac{1}{c} \int^{\tau}_0 \frac{d}{dt} \int_{\mathbb{R}^d} \Lambda^{s'-1} E^1 \cdot \Lambda^{s'-1} \nabla \times B^1 \,dxdt 
			\\
			&\quad +  \|\Lambda^{s'-1} \nabla \times E^1\|^2_{L^2(0,t;L^2)} 
			+ \int^{\tau}_0 \int_{\mathbb{R}^d} \Lambda^{s'-1} j^1 \cdot \Lambda^{s'-1} \nabla \times B^1 \,dxdt,
		\end{align*} 
		which by using \eqref{Gamma_1} and \eqref{E1_L2Hs} below, the facts $c \geq 1$ and $\textnormal{div}\, B^1 = \textnormal{div}\, E^1 = 0$, yields for $t > 0$
		\begin{equation} \label{B1_L2Hs}
			\|B^1\|^2_{L^2(0,t;\dot{H}^{s'})} 
			\leq C(\sigma) \|\Gamma_0\|^2_{X^s} + 2 \|E^1\|^2_{L^2(0,t;\dot{H}^{s'})} \leq C(\sigma) \|\Gamma_0\|^2_{X^s}.
		\end{equation}
		It remains now to bound the norm $\|E^{1}\|_{L^2(0,t;\dot{H}^s)}$ in terms of initial data as follows for $t > 0$. It follows from the Maxwell system in \eqref{NSM-GO_n} with $n = 1$ again that for $t > 0$ and $s'' \in [0,s]$
		\begin{equation} \label{E1_L2Hs}
			\|(E^1,B^1)(t)\|^2_{\dot{H}^{s''}} + 2c^2\sigma \|E^1\|^2_{L^2(0,t;\dot{H}^{s''})} = \|(E^1, B^1)(0)\|^2_{\dot{H}^{s''}} \leq \|\Gamma_0\|^2_{X^s}.
		\end{equation}
		Thus, it follows from the heat equation of $v^1$ in \eqref{NSM-GO_n} with $n = 1$ and \cite[Proposition 2.20 and Theorem 2.34]{Bahouri-Chemin-Danchin_2011} for $d \in \{2,3\}$, i.e.,
		\begin{equation*}
			\dot{H}^{\frac{d}{2}-1} \hookrightarrow \dot{B}^{-1}_{\infty,2} \qquad \text{and} \qquad \left\|\exp\{\nu t(-\Delta)\} v^{2,1}(0)\right\|_{L^2(0,\infty;L^\infty)} \approx \|v^{2,1}(0)\|_{\dot{B}^{-1}_{\infty,2}},
		\end{equation*}
		that for $t > 0$
		\begin{equation} \label{v1_L2Linfty}
			\|v^{1}\|_{L^2(0,t;L^\infty)} = \|\exp\{\nu \tau (-\Delta)\} v^{1}(0)\|_{L^2(0,t;L^\infty)} 
			\leq C(d,\nu) \|v_0\|_{L^2}.
		\end{equation}
		Finally, we summarize \eqref{Gamma_1}-\eqref{v1_L2Linfty} as follows for $t > 0$, $s' \in [1,s]$, $s'' \in [0,s]$ and $C = C(d,\nu,\sigma)$
		\begin{align} \label{Gamma_1_2}
			\|\Gamma^1(t)\|^2_{X^s} + \int^t_0 \|\nabla v^1\|^2_{H^{s-1}} + \|v^1\|^2_{L^\infty} + \|E^1\|^2_{\dot{H}^{s''}} + \|B^1\|^2_{\dot{H}^{s'}} 
			+ \|j^1\|^2_{H^s} \,d\tau \leq C\|\Gamma_0\|^2_{X^s},
		\end{align}
		which is \eqref{IH} with $n = 1$.
		
		\textbf{Step 1b: The case $n = 2$.} 
		In the case $n = 2$, 
		similar to Step 1a, we also have the existence and uniqueness of solution $\Gamma^2 \in C^1([0,T^2_*);X^s_2)$ for some $T^2_* > 0$. Furthermore, in this case the $X^s$ estimate of \eqref{NSM-GO_n} with $n = 2$ is given as follows
		\begin{equation*}
			\frac{1}{2}\frac{d}{dt} \|\Gamma^2\|^2_{X^s} + \nu \|\nabla v^2\|^2_{H^{s-1}} + \frac{1}{\sigma}\|j^2\|^2_{H^s} =: \sum^5_{i=1} I^2_i,
		\end{equation*}
		where for some $\epsilon \in (0,1)$ and for $s > \frac{d}{2}$ 
		\begin{align*}
			I^2_1 &= \int_{\mathbb{R}^d} \Lambda^{s-1}(v^1 \cdot \nabla v^2) \cdot \Lambda^{s-1} v^2\,dx;
			\\
			I^2_2 &= \int_{\mathbb{R}^d} \Lambda^{s-1}(j^2 \times B^1) \cdot \Lambda^{s-1} v^2\,dx
			\\
			&\leq 
			\begin{cases}
				\epsilon\nu \|\nabla v^2\|^2_{H^{s-1}} + C(\epsilon,\nu)\|j^2\|^2_{H^s} \|B^1\|^2_{H^s} & \text{if} \quad s > 2,
				\\
				\frac{\epsilon}{\sigma}\|j^2\|^2_{H^s} + C(\epsilon,\sigma,s) \|B^1\|^2_{H^s} \|v^2\|^2_{L^\infty} & \text{if} \quad s \in (\frac{d}{2},2];
			\end{cases}
			\\
			I^2_3 &= \int_{\mathbb{R}^d} \Lambda^s j^2 \cdot \Lambda^s(v^2 \times B^1)\,dx
			\leq C(d,s)\|j^2\|_{\dot{H}^s} \left(\|v^2\|_{\dot{H}^s} \|B^1\|_{L^\infty} + \|v^2\|_{L^\infty}\|B^1\|_{\dot{H}^s}\right)
			\\
			&\leq C(d,s)\|B^1\|_{H^s}\left(\|\nabla v^2\|^2_{\dot{H}^{s-1}} + \|j^2\|^2_{H^s}\right) + \frac{\epsilon}{\sigma}\|j^2\|^2_{H^s} + C(d,\epsilon,\sigma,s)\|v^2\|^2_{L^\infty}\|B^1\|^2_{H^s};
			\\
			I^2_4 &= -\frac{\kappa}{\sigma} \int_{\mathbb{R}^d} \Lambda^s j^2 \cdot  \Lambda^s(j^1 \times B^2) \,dx
			\leq \frac{\epsilon}{\sigma}\|j^2\|^2_{H^s} + C(d,\epsilon,\kappa,\sigma,s)\|j^1\|^2_{H^s} \|B^2\|^2_{H^s};
			\\
			I^2_5 &= -\frac{\kappa}{\sigma} \int_{\mathbb{R}^d} j^2 \cdot (j^1 \times B^2)\,dx
			\leq \frac{\epsilon}{\sigma}\|j^2\|^2_{L^2} + C(d,\epsilon,\kappa,\sigma,s)\|j^1\|^2_{H^s}\|B^2\|^2_{L^2}.
		\end{align*}
		It remains to bound $I^{2}_1$. If $d = 2$ and $s \in (1,2)$ then
		\begin{align*}
			I^{1}_1 
			&\leq 3\epsilon\nu \|\nabla v^{2}\|^2_{\dot{H}^{s-1}} + C(d,\epsilon,\nu,s) \left(\|\nabla v^{1}\|^2_{H^{s-1}} + \|v^{1}\|^2_{L^\infty}\right)\|v^{1}\|^2_{\dot{H}^{s-1}}.
		\end{align*}
		If $d = 2$ and $s \geq 2$ then 
		\begin{align*}
			I^{1}_1 
			&\leq 3\epsilon\nu \|\nabla v^{2}\|^2_{H^{s-1}} + C(d,\epsilon,\nu,s) \left(\|\nabla v^{1}\|^2_{H^{s-1}} + \|v^{1}\|^2_{L^\infty}\right)\|v^{2}\|^2_{\dot{H}^{s-1}}.
		\end{align*}
		If $d = 3$ and $s > \frac{3}{2}$ then
		\begin{align*}
			I^{1}_1 
			&\leq 3\epsilon\nu \|\nabla v^{2}\|^2_{H^{s-1}} + C(d,\epsilon,\nu,s) \left(\|\nabla v^{1}\|^2_{H^{s-1}} + \|v^{1}\|^2_{L^\infty}\right)\|v^{2}\|^2_{\dot{H}^{s-1}}.
		\end{align*}
		Here (and in the sequel without saying), we used some well-known inequalities (see \cite{Bahouri-Chemin-Danchin_2011})
		\begin{align*}
			&&\|f\|_{L^{p_0}(\mathbb{R}^d)} &\leq C(d,p_0,s_0) \|f\|_{\dot{H}^{s_0}(\mathbb{R}^d)}  & &\text{for}\quad s_0 \in \left[0,\frac{d}{2}\right),\, p_0 = \frac{2d}{d-2s_0},&&
			\\
			&&\|f\|_{\dot{H}^{s_1}(\mathbb{R}^d)} &\leq \|f\|^{\alpha_0}_{L^2(\mathbb{R}^d)} \|f\|^{1-\alpha_0}_{\dot{H}^{s_2}(\mathbb{R}^d)} &
			&\text{for}\quad s_1,s_2 \in (0,\infty), s_1 < s_2, \alpha_0 = 1 - \frac{s_1}{s_2},&&
		\end{align*}
		and the following homogeneous Kato-Ponce type inequality (see \cite{Grafakos-Oh_2014})
		for $1 < p_i,q_i \leq \infty$, $i \in \{1,2\}$, $s_0 \in \mathbb{R}, s_0 > 0$ and  $\frac{1}{p_i} + \frac{1}{q_i} = \frac{1}{2}$
		\begin{equation*} 
			\|\Lambda^{s_0}(fg)\|_{L^2(\mathbb{R}^d)} \leq C(d,s_0,p_i,q_i)\left(\|\Lambda^{s_0} f\|_{L^{p_1}(\mathbb{R}^d)}\|g\|_{L^{q_1}(\mathbb{R}^d)} + \|f\|_{L^{p_2}(\mathbb{R}^d)}\|\Lambda^{s_0} g\|_{L^{q_2}(\mathbb{R}^d)} \right).
		\end{equation*}
		By choosing $\epsilon = \frac{1}{8}$ and using \eqref{Gamma_1} under assuming  $C\|\Gamma_0\|^2_{X^s} \leq 1$ for some positive constant $C = C(d,\kappa,\nu,\sigma,s)$, we find that for $t \in (0,T^2_*)$ and for $C = C(d,\kappa,\nu,\sigma,s)$
		\begin{equation} \label{Main_2}
			\esssup_{\tau \in (0,t)} \|\Gamma^2(\tau)\|^2_{X^s} + \int^t_0 \nu \|\nabla v^2\|^2_{H^{s-1}} + \frac{1}{\sigma}\|j^2\|^2_{H^s} \,d\tau \leq C\|\Gamma_0\|^2_{X^s}\left(1 + \int^t_0 \|v^2\|^2_{L^\infty} \,d\tau\right).
		\end{equation}
		It remains to bound the norm $\|v^2\|_{L^2(0,T^2;L^\infty)}$ for $T^2 \in (0,T^2_*)$ in a suitable way. In order to do that, we will separate the cases $d = 2$ and $d = 3$ as follows. 
		
		\textbf{Step 1b-1: The case $d = 2$.} We follow the ideas in \cite{Germain-Ibrahim-Masmoudi_2014,Ibrahim-Keraani_2012} by considering the following velocity decomposition  $v^{2} := v^{2,1} + v^{2,2}$ with $\text{supp}(\mathcal{F}(v^{2,i})) \subseteq B_{\overline{2}}$ for $i \in \{1,2\}$,
		\begin{equation*}
			\left\{
			\begin{aligned}
				\partial_t v^{2,1} - \nu \Delta v^{2,1} &= - \mathbb{P}\mathcal{T}_{\overline{2}}(v^{1} \cdot \nabla v^{2}) + \mathbb{P}\mathcal{T}_{\overline{2}}(\dot{S}_2(\dot{R}(j^{2}, B^{1}))) =: f^{2,1},  
				\\
				\text{div}\, v^{2,1} &= 0, 
				\quad 
				v^{2,1}_{|_{t=0}} = v^{2}_{|_{t=0}},
			\end{aligned}
			\right.
		\end{equation*}
		and 
		\begin{equation*}
			\left\{
			\begin{aligned}
				\partial_t v^{2,2} - \nu \Delta v^{2,2} &= \mathbb{P}\mathcal{T}_{\overline{2}}(\dot{T}_{j^{2}} B^{1} + \dot{T}_{B^{1}} j^{2} + (\textnormal{Id}-\dot{S}_2)(\dot{R}(j^{2},B^{1}))) =: f^{2,2},  
				\\
				\text{div} \, v^{2,2} &= 0,
				\quad
				v^{2,2}_{|_{t=0}} = 0,
			\end{aligned}
			\right.
		\end{equation*}
		where $\textnormal{Id}$ denotes the identity operator and we also use the following decomposition
		\begin{equation*}
			j^{2} \times B^{1} := \dot{T}_{j^{2}} B^{1} + \dot{T}_{B^{1}} j^{2} + \dot{R}(j^{2},B^{1}).
		\end{equation*}
		More precisely, by using the definition of the cross product
		\begin{equation*}
			j^{2} \times B^{1} := (j^{2}_2 B^{1}_3 - j^{2}_3 B^{1}_2, j^{2}_3B^{1}_1 - j^{2}_1B^{1}_3,j^{2}_1B^{1}_2 - j^{2}_2B^{1}_1),
		\end{equation*}
		we denoted
		\begin{align} \label{T_j2B1}
			\dot{T}_{j^{2}} B^{1} &:= (\dot{T}_{j^{2}_2} B^{1}_3 - \dot{T}_{j^{2}_3} B^{1}_2,
			\dot{T}_{j^{2}_3} B^{1}_1 - \dot{T}_{j^{2}_1} B^{1}_3, \dot{T}_{j^{2}_1} B^{1}_2 - \dot{T}_{j^{2}_2} B^{1}_1),
			\\ \label{T_B1j2}
			\dot{T}_{B^{1}} j^{2} &:= (\dot{T}_{B^{1}_3} j^{2}_2 - \dot{T}_{B^{1}_2} j^{2}_3,\dot{T}_{B^{1}_1} j^{2}_3 - \dot{T}_{B^{1}_3} j^{2}_1,\dot{T}_{B^{1}_2} j^{2}_1 - \dot{T}_{B^{1}} j^{2}_2),
			\\ \label{R_j2B1}
			\dot{R}(j^{2},B^{1}) &:= (\dot{R}(j^{2}_2,B^{1}_3) - \dot{R}(j^{2}_3,B^{1}_2),\dot{R}(j^{2}_3,B^{1}_1) - \dot{R}(j^{2}_1,B^{1}_3), \dot{R}(j^{2}_1,B^{1}_2) -\dot{R}(j^{2}_2,B^{1}_1)),
		\end{align}
		where the definitions of the homogeneous Bony decomposition and $\dot{S}_2$ can be found in \cite{Bahouri_2019,Bahouri-Chemin-Danchin_2011}. In the sequel, we aim to estimate the norm $\|v^{2}\|_{L^2(0,T^{2};L^\infty)}$ in terms of the ones $\|v^{2,1}\|_{L^2(0,T^{2};L^\infty)}$ and $\|v^{2,2}\|_{L^2(0,T^{2};L^\infty)}$ for $T^2 \in (0,T^2_*)$. By using Duhamel formula, we can represent $v^{2,1}$ as follows for $t \in (0,T^{2}_*)$
		\begin{align} \label{R_v2_1}
			v^{2,1}(t) &= \exp\{\nu t (-\Delta)\} v^{2,1}(0) + \int^t_0 \exp\{\nu(t-\tau)(-\Delta)\} f^{2,1}(\tau) \,d\tau,
		\end{align}
		where $\exp\{\nu t(-\Delta)\}$ is the usual heat kernel and is defined in Appendix. Therefore, \eqref{R_v2_1} implies that for $T^2 \in (0,T^2_*)$
		\begin{equation} \label{v2_1_L2Linfty}
			\|v^{2,1}(t)\|_{L^2(0,T^{2};L^\infty)} \leq I^{2,1}_1 + I^{2,1}_2.
		\end{equation} 
		Similar to \eqref{v1_L2Linfty}, we have the following bound for the first term
		\begin{equation} \label{I_21_1}
			I^{2,1}_1 := \left\|\exp\{\nu t (-\Delta)\} v^{2,1}(0)\right\|_{L^2(0,T^{2};L^\infty)} \leq C(d,\nu) \|v^{2,1}(0)\|_{\dot{B}^{-1}_{\infty,2}} 
			\leq C(d,\nu)\|v_0\|_{L^2}.
		\end{equation}
		While the second term is bounded by (see \cite{Chemin-Gallagher_2006,Germain-Ibrahim-Masmoudi_2014})
		\begin{align} \label{I_21_2}
			I^{2,1}_2 &:= \left\|\int^t_0 \exp\{\nu(t-\tau)(-\Delta)\} f^{2,1}(\tau) \,d\tau \right\|_{L^2(0,T^{2};L^\infty)} \leq C(d,\nu) \|f^{2,1}\|_{L^1(0,T^{2};\dot{H}^{\frac{d}{2}-1})} \\ \nonumber
			&\leq I^{2,1}_{21} + I^{2,1}_{22}.
		\end{align}
		In two dimensions, the right-hand side of \eqref{I_21_2} can be dominated by 
		\begin{align} 
			\label{I_21_21}
			I^{2,1}_{21} &:= C(d,\nu)\|v^{1} \cdot \nabla v^{2}\|_{L^1(0,T^{2};L^2)} \leq C(d,\nu) \|v^{1}\|_{L^2(0,T^{2};L^\infty)} \|\nabla v^{2}\|_{L^2(0,T^{2};L^2)};
			\\ 	\label{I_21_22}
			I^{2,1}_{22} &:= C(d,\nu)\|\dot{S}_2(\dot{R}(j^{2},B^{1}))\|_{L^1(0,T^{2};L^2)} \leq C(d,\nu)\|j^{2}\|_{L^2(0,T^{2};L^2)} \|B^{1}\|_{L^2(0,T^{2};\dot{H}^{1,0})}, 
		\end{align}
		where in the estimate of $I^{2,1}_{22}$, we also used Lemma \ref{lem_paraproduct}, which is applied to each component in \eqref{R_j2B1}. By using the continuous embedding $\dot{H}^1 = \dot{H}^{1,1}  \hookrightarrow \dot{H}^{1,0}$ (see Appendix), \eqref{B1_L2Hs}, \eqref{v1_L2Linfty} and \eqref{v2_1_L2Linfty}-\eqref{I_21_22}
		\begin{equation} \label{v_21_L2Linfty_2}
			\|v^{2,1}\|^2_{L^2(0,T^{2};L^\infty)} \leq C(d,\nu,\sigma)\|\Gamma_0\|^2_{H^s}\left(1 +  \|\nabla v^{2}\|^2_{L^2(0,T^{2};L^2)} + \|j^{2}\|^2_{L^2(0,T^{2};L^2)}\right).
		\end{equation}
		We continue with the estimate of $v^{2,2}$ as follows. By using the continuous embedding $\dot{B}^\frac{d}{2}_{2,1} \hookrightarrow L^\infty$ for $d \in \{2,3\}$ (see \cite[Proposition 2.39]{Bahouri-Chemin-Danchin_2011}) and Lemma \ref{pro-F_heat}, which is applied to the heat equation of $v^{2,2}$ with $v^{2,2}_{|_{t=0}} = 0$, we find that 
		\begin{equation} \label{v_22_L2Linfty}
			\|v^{2,2}\|_{L^2(0,T^{2};L^\infty)} \leq C \|v^{2,2}\|_{L^2(0,T^{2};\dot{B}^1_{2,1})} \leq C(\nu)\|f^{2,2}\|_{L^2(0,T^{2};\dot{B}^{-1}_{2,1})}.
		\end{equation}
		Furthermore, we employ Lemma \ref{lem_paraproduct} again, which is applied to each component in \eqref{T_B1j2}-\eqref{R_j2B1}, to obtain the following estimates
		\begin{align} \label{para_21_1}
			\|\dot{T}_{j^{2}} B^{1}\|_{L^2(0,T^{2};\dot{B}^{-1}_{2,1})} &\leq C\|B^{1}\|_{L^\infty(0,T^{2};L^2)} \|j^{2}\|_{L^2(0,T^{2};L^2)};
			\\ \label{para_21_2}
			\|\dot{T}_{B^{1}} j^{2}\|_{L^2(0,T^{2};\dot{B}^{-1}_{2,1})} 
			&\leq C\|B^{1}\|_{L^\infty(0,T^{2};L^2)} \|j^{2}\|_{L^2(0,T^{2};L^2)};
			\\ \label{para_21_3}
			\|(\textnormal{Id}-\dot{S}_2)(\dot{R}(j^{2},B^{1}))\|_{L^2(0,T^{2};\dot{B}^{-1}_{2,1})}
			&\leq C \|B^{1}\|_{L^\infty(0,T^{2};L^2_{\textnormal{log}})} \|j^{2}\|_{L^2(0,T^{2};L^2_{\textnormal{log}})}.
		\end{align}
		Collecting \eqref{v_22_L2Linfty}-\eqref{para_21_3} and using \eqref{Gamma_1_2}, which yield
		\begin{equation} \label{v_2m2_L2Linfty_1}
			\|v^{2,2}\|^2_{L^2(0,T^{2};L^\infty)} \leq  C(d,\nu,\sigma)\|\Gamma_0\|^2_{X^s} \|j^{2}\|^2_{L^2(0,T^{2};H^s)},
		\end{equation}
		that combines with \eqref{v_21_L2Linfty_2} leading to for $T^2 \in (0,T^2_*)$
		\begin{equation} \label{v2_L2Linfty}
			\|v^{2}\|^2_{L^2(0,T^{2};L^\infty)} \leq C(d,\nu,\sigma)\|\Gamma_0\|^2_{X^s}\left(1 + \|\nabla v^{2}\|^2_{L^2(0,T^{2};L^2)} +  \|j^{2}\|^2_{L^2(0,T^{2};H^s)}\right).
		\end{equation}
		
		\textbf{Step 1b-2: The case $d = 3$.} Similar to the case $d = 2$, we decompose $v^{2} := v^{2,1} + v^{2,2}$ with $\text{supp}(\mathcal{F}(v^{2,i})) \subseteq B_{\overline{2}}$ for $i \in \{1,2\}$,
		\begin{align*}
			\partial_t v^{2,1} - \nu \Delta v^{2,1} &= \mathbb{P}\mathcal{T}_{\overline{2}}(-v^{1} \cdot \nabla v^{2}),  
			\quad 
			\text{div}\, v^{2,1} = 0, 
			\quad
			v^{2,1}_{|_{t=0}} = v^{2}_{|_{t=0}},
			\\
			\partial_t v^{2,2} - \nu \Delta v^{2,2} &= \mathbb{P}\mathcal{T}_{\overline{2}}(j^{2} \times B^{1}),  
			\qquad
			\text{div} \, v^{2,2} = 0,
			\quad
			v^{2,2}_{|_{t=0}} = 0.
		\end{align*}
		Similar to the previous case, i.e., as in \eqref{R_v2_1}-\eqref{I_21_2} and \eqref{v_22_L2Linfty}
		\begin{align}
			\nonumber
			\|v^{2,1}\|^2_{L^2(0,T^{2};L^\infty)} 
			&\leq C(d,\nu)\|v_0\|^2_{\dot{H}^{\frac{1}{2}}} + C(\nu)\|v^{1}\|^2_{L^2(0,T^{2};L^\infty)} \|v^{2}\|^2_{L^2(0,T^{2};\dot{H}^\frac{3}{2})} 
			\\ \label{v_21_L2Linfty_3}
			&\quad + C(d,\nu)\|v^{1}\|^2_{L^2(0,T^{2};\dot{H}^\frac{3}{2})} \|v^{2}\|^2_{L^2(0,T^{2};L^\infty)};
			\\ \label{v_22_L2Linfty_2} 
			\|v^{2,2}\|^2_{L^2(0,T^{2};L^\infty)} 
			&\leq C(d,\nu) \|j^{2}\|^2_{L^2(0,T^{2};\dot{H}^\frac{1}{2})} \|B^{1}\|^2_{L^\infty(0,T^{2};\dot{H}^\frac{1}{2})},
		\end{align}
		where we also used \cite[Corollary 2.55]{Bahouri-Chemin-Danchin_2011} 
		in the last inequality. 
		
		\textbf{Step 1b-3: Conclusion.} Thus, under suitable smallness assumption on the initial data, we find from \eqref{Gamma_1} and  \eqref{v2_L2Linfty}-\eqref{v_22_L2Linfty_2}
		\begin{equation} \label{v2_L2Linfty_1}
			\|v^{2}(t)\|^2_{L^2(0,T^{2};L^\infty)} \leq C(d,\nu,\sigma)\|\Gamma_0\|^2_{X^s}\left(1 + \|\nabla v^{2}\|^2_{L^2(0,T^{2};H^{s-1})} + \|j^{2}\|^2_{L^2(0,T^{2};H^s)}\right),
		\end{equation}
		which by using \eqref{Main_2} implies that for $t \in (0,T^2_*)$
		\begin{equation} \label{Main_2_1}
			\esssup_{\tau \in (0,t)} \|\Gamma^{2}(\tau)\|^2_{X^s} +  \int^t_0 \nu \|\nabla v^{2}\|^2_{H^{s-1}} + \frac{1}{\sigma}\|j^{2}\|^2_{H^s} \,d\tau \leq C(d,\kappa,\nu,\sigma,s) \|\Gamma_0\|^2_{X^s}.
		\end{equation}
		It implies that $T^{2}_* = \infty$ as well. In addition, it can be seen from \eqref{v2_L2Linfty}, \eqref{v2_L2Linfty_1} and \eqref{Main_2_1} that
		\begin{equation} \label{v2_L2Linfty_2}
			\|v^{2}\|_{L^2(0,\infty;L^\infty)} \leq C(d,\kappa,\nu,\sigma,s) \|\Gamma_0\|_{X^s}.
		\end{equation}
		Furthermore, by using the Maxwell system  in \eqref{NSM-GO_n} with $n = 2$, similar to Step 1a, it follows that for $s' \in [1,s]$ and $\tau \in (0,\infty)$
		\begin{align*}
			\int^{\tau}_0 \|\Lambda^{s'-1} \nabla \times B^2\|^2_{L^2} \,dt 
			&= \frac{1}{c} \int^{\tau}_0 \frac{d}{dt} \int_{\mathbb{R}^d} \Lambda^{s'-1} E^2 \cdot \Lambda^{s'-1} \nabla \times B^2 \,dxdt 
			\\
			&\quad +  \|\Lambda^{s'-1} \nabla \times E^2\|^2_{L^2(0,t;L^2)} 
			+ \int^{\tau}_0 \int_{\mathbb{R}^d} \Lambda^{s'-1} j^2 \cdot \Lambda^{s'-1}\nabla \times B^2 \,dxdt.
		\end{align*} 
		In addition, as in \eqref{E1_L2Hs} with using
		\begin{equation*}
			j^{2} = -\kappa \mathbb{P}\mathcal{T}_{\overline{2}}(j^{1} \times B^{2}) + \sigma(cE^{2} + \mathbb{P}\mathcal{T}_{\overline{2}}(v^{2} \times B^{1})),
		\end{equation*}
		it follows from \eqref{Gamma_1}, \eqref{Main_2_1}-\eqref{v2_L2Linfty_2} that for $t \in (0,\infty)$ and $s'' \in [0,s]$
		\begin{equation} \label{E2_L2Hs}
			\|(E^2,B^2)(t)\|^2_{\dot{H}^{s''}} + c^2\sigma \|E^2\|^2_{L^2(0,t;\dot{H}^{s''})} \leq C(d,\kappa,\nu,\sigma,s) \|\Gamma_0\|^2_{X^s},
		\end{equation}
		which as in \eqref{B1_L2Hs} with using the facts that $\textnormal{div}\, B^2 = \textnormal{div}\, E^2 = 0$ and $c \geq 1$, further implies that for $s' \in [1,s]$
		\begin{equation} \label{B2_L2Hs}
			\|B^2\|_{L^2(0,\infty;\dot{H}^{s'})} \leq C(d,\kappa,\nu,\sigma,s) \|\Gamma_0\|_{X^s}. 
		\end{equation}
		Finally, it follows from \eqref{Main_2} and \eqref{v2_L2Linfty_2}-\eqref{B2_L2Hs} that for $t > 0$, $s' \in [1,s]$, $s'' \in [0,s]$ and for $C = C(d,\kappa,\nu,\sigma,s) $
		\begin{equation} \label{Main_2_2}
			\esssup_{\tau \in (0,t)} \|\Gamma^{2}(\tau)\|^2_{X^s} +  \int^t_0 \|\nabla v^{2}\|^2_{H^{s-1}} + \|v^2\|^2_{L^\infty} + \|E^2\|^2_{\dot{H}^{s''}} + \|B^2\|^2_{\dot{H}^{s'}} + \|j^{2}\|^2_{H^s} \,d\tau \leq C\|\Gamma_0\|^2_{X^s},
		\end{equation}
		which is \eqref{IH} with $n = 2$.
		
		\textbf{Step 1c: The case $n > 2$ with an induction argument.} In the sequel, we aim to use an induction argument. More precisely, we obtained \eqref{Main_2_2}, which is \eqref{IH} with $n = 2$ and $T^{2}_* = \infty$. Now, by assuming \eqref{IH} holds for $n = k > 2$ with $T^{k}_* = \infty$, i.e., for $t > 0$ and for some positive $k$-independent constant $C = C(d,\kappa,\nu,\sigma,s)$, $s' \in [1,s]$ and $s'' \in [0,s]$
		\begin{equation} \label{E_km_induction}
			\esssup_{\tau \in (0,t)} \|\Gamma^{k}(\tau)\|^2_{X^s} +  \int^t_0 \|\nabla v^{k}\|^2_{H^{s-1}} + \|v^k\|^2_{L^\infty} + \|E^k\|^2_{\dot{H}^{s''}} + \|B^k\|^2_{\dot{H}^{s'}} + \|j^{k}\|^2_{H^s} \,d\tau \leq C\|\Gamma_0\|^2_{X^s},
		\end{equation}
		where $\Gamma^{k}$ and $j^{k}$ satisfy \eqref{NSM-GO_n} with $n = k$. We then will prove that  \eqref{IH} is also satisfied for $n = k+1$ and it leads to $T^{k+1}_* = \infty$ as well, under further smallness assumptions of the initial data and using \eqref{E_km_induction}. 
		As in the previous parts, we have the local existence and uniqueness of solutions $\Gamma^{k+1} \in C^1([0,T^{k+1}_*);X^s_{k+1})$ for some $T^{k+1}_* > 0$. If the initial data is small suitably, we can use \eqref{E_km_induction} and repeat Step 1c by considering the pair $(k+1,k)$, which plays a role as the one $(2,1)$, i.e., $(\Gamma^{2},\Gamma^{1},j^{2},j^{1})$ and \eqref{NSM-GO_n} with $n = 2$ can be replaced by $(\Gamma^{k+1},\Gamma^{k},j^{k+1},j^{k})$ and 
		\eqref{NSM-GO_n} with $n = k+1$
		, respectively, to obtain \eqref{IH} with $n = k+1$, i.e., for $t \in (0,T^{k+1}_*)$
		\begin{align} \nonumber
			\esssup_{\tau \in (0,t)} \|\Gamma^{k+1}(\tau)\|^2_{X^s} +  &\int^t_0 \|\nabla v^{k+1}\|^2_{H^{s-1}} + \|v^{k+1}\|^2_{L^\infty} + \|E^{k+1}\|^2_{\dot{H}^{s''}} \,d\tau 
			\\ \label{E_k+1_induction}
			&\quad + \int^t_0  \|B^{k+1}\|^2_{\dot{H}^{s'}} + \|j^{k+1}\|^2_{H^s} \,d\tau 
			\leq C\|\Gamma_0\|^2_{X^s}.
		\end{align}
		We omit further details. Thus, $T^{k+1}_* = \infty$ and there exists a positive $n$-independent constant $C = C(d,\kappa,\nu,\sigma,s)$  such that for $n \in \mathbb{N}$, $t > 0$, $s' \in [1,s]$ and $s'' \in [0,s]$
		\begin{equation} \label{En_global}
			\esssup_{\tau \in (0,t)} \|\Gamma^{n}(\tau)\|^2_{X^s} +  \int^t_0 \|\nabla v^{n}\|^2_{H^{s-1}} + \|v^n\|^2_{L^\infty} + \|E^n\|^2_{\dot{H}^{s''}} + \|B^n\|^2_{\dot{H}^{s'}} + \|j^{n}\|^2_{H^s} \,d\tau \leq C \|\Gamma_0\|^2_{X^s}.
		\end{equation}
		
		\textbf{Step 2: Cauchy sequence and passsing to the limit as $n \to \infty$.} Firstly, we aim to prove that $\Gamma^{n}$ and $(\nabla v^{n},v^n,E^n,E^n,B^n,j^{n})$ are Cauchy sequences in $L^\infty(0,\infty;H^{r-1} \times H^r \times H^r)$ and $L^2(0,\infty;H^{r-1} \times L^\infty \times \dot{H}^1 \times L^2 \times \dot{H}^1 \times H^r)$, respectively for $r \in (\frac{d}{2},s)$. Secondly, the Cauchy property will allow us to pass to the limit as $n \to \infty$ in which we expect that \eqref{NSM-GO_n} converges to \eqref{NSM-GO} strongly in some particular sense.

		\textbf{Step 2a: Cauchy sequence.}  For $n \in \mathbb{N}$, we consider two solutions $\Gamma^n = (v^n,E^n,B^n)$ and $\Gamma^{n+1} = (v^{n+1},E^{n+1},B^{n+1})$  with the initial data $\Gamma^n_{|_{t=0}} = \mathcal{T}_{\overline{n}}(\Gamma_0)$ and $\Gamma^{n+1}_{|_{t=0}} = \mathcal{T}_{\overline{n+1}}( \Gamma_0)$, respectively. 
		We denote $f^{n+1,n} := f^{n+1} - f^n$ and $\Gamma^{n+1,n} := (v^{n+1,n},E^{n+1,n},B^{n+1,n})$ for $f \in \{v,E,B,j\}$. 
		
		\textbf{Step 2a-1: The $X^r$ estimate.} Let us fix any $r \in (\frac{d}{2},s)$. The $X^r$ estimate of the difference 
		is given as follows
		\begin{equation} \label{Gamma_n+1n}
			\frac{1}{2}\frac{d}{dt}\|\Gamma^{n+1,n}\|^2_{X^r} + \nu \|\nabla v^{n+1,n}\|^2_{H^{r-1}} + \frac{1}{\sigma}\|j^{n+1,n}\|^2_{H^{r}} =: \sum^{12}_{i=1} \sum_{k \in \{0,r\}}  I^{n+1,n}_{i,k}.
		\end{equation}
		Here (and in the sequel) we will use the following property 
		(see 
		\cite{Fefferman-McCormick-Robinson-Rodrigo_2014}) 
		for $s_1,s_2 \in \mathbb{R}$, $s_2 \geq 0$ 
		\begin{equation} \label{FT}
			\|\mathcal{T}_m f - f\|_{Z^{s_1}} \leq m^{-s_2} \|f\|_{Z^{s_1+s_2}} \qquad \text{for} \quad Z \in \{\dot{H},H\}.
		\end{equation}
		By using \eqref{FT} and direct computations, 
		it follows from \eqref{Gamma_n+1n} that for $t > 0$ and for $C_1 = C_1(d,\kappa,\nu,\sigma,s) > 1$
		\begin{align} \nonumber
			\mathcal{E}^{n+1,n}_r(t) &\leq C_1\max\{\overline{n}^{-2(s-r)},\overline{n}^{-s}\} \|\Gamma_0\|^2_{X^r} + \frac{1}{16} \mathcal{E}^{n,n-1}_r(t) + C_1 \int^t_0 \|v^{n+1,n}\|^2_{L^\infty} \,d\tau 
			\\ \label{Gamma_n+1n_1}
			&\quad +  \int^t_0 \delta_0 \|(E^{n+1,n},B^{n+1,n})\|^2_{\dot{H}^1} + \|E^{n+1,n}\|^2_{L^2} \,d\tau,
		\end{align}
		where we chose the right-hand side of \eqref{En_global} to be small enough and defined the following energy form for $n \in \mathbb{N}$ 
		\begin{align*}
			\mathcal{E}^{n+1,n}_r(t) &:= \esssup_{\tau \in (0,t)}\|\Gamma^{n+1,n}(\tau)\|^2_{X^r} + \int^t_0 \nu \|\nabla v^{n+1,n}\|^2_{H^{r-1}} + \|v^{n+1,n}\|^2_{L^\infty} \,d\tau
			\\
			&\quad + \int^t_0  \delta_0  \|(E^{n+1,n},B^{n+1,n})\|^2_{\dot{H}^1} +  \|E^{n+1,n}\|^2_{L^2} + \frac{1}{\sigma}\|j^{n+1,n}\|^2_{H^r} \,d\tau
		\end{align*}
		for some small constant $\delta_0 \in (0,1)$ to be chosen later. Next, we will bound the norms  $\|v^{n+1,n}\|_{L^2(0,t;L^\infty)}$, $\|(E^{n+1,n},B^{n+1,n})\|_{L^2(0,t;\dot{H}^1)}$ and $\|E^{n+1,n}\|_{L^2(0,t;L^2)}$ in a suitable way in terms of $\mathcal{E}^{n+1,n}_r(t)$ and $\mathcal{E}^{n,n-1}_r(t)$ for $t > 0$, since these bounds are needed in closing the estimate \eqref{Gamma_n+1n_1}.
		
		\textbf{Step 2a-2: Estimate of $\|v^{n+1,n}\|_{L^2_tL^\infty_x}$.} 
		It can be seen that 
		\begin{equation} \label{v_n+1n}
			\partial_t v^{n+1,n} - \nu \Delta v^{n+1,n} = \sum^{6}_{i=1} f^{n+1,n,i}, \qquad v^{n+1,n}_{|_{t=0}} = (\mathcal{T}_{\overline{n+1}}-\mathcal{T}_{\overline{n}})(v_0),
		\end{equation}
		where 
		\begin{align*}
			f^{n+1,n,1} &:= \mathbb{P}(\mathcal{T}_{\overline{n+1}}-\mathcal{T}_{\overline{n}})(-v^{n-1} \cdot \nabla v^n),
			&&f^{n+1,n,2} := \mathbb{P}\mathcal{T}_{\overline{n+1}}(-v^{n,n-1} \cdot \nabla v^{n+1}),
			\\
			f^{n+1,n,3} &:=  \mathbb{P}\mathcal{T}_{\overline{n+1}}(-v^{n-1} \cdot \nabla v^{n+1,n}),
			&&f^{n+1,n,4} :=  \mathbb{P}(\mathcal{T}_{\overline{n+1}}-\mathcal{T}_{\overline{n}})(j^{n}\times B^{n-1}),
			\\
			f^{n+1,n,5} &:=  \mathbb{P}\mathcal{T}_{\overline{n+1}}(j^{n+1,n} \times B^n),
			&&f^{n+1,n,6} := \mathbb{P}\mathcal{T}_{\overline{n+1}}(j^n \times B^{n,n-1}).
		\end{align*}
		
		\textbf{Step2a-2a: The case $d=2$.} In this case, in order to control the norm $\|v^{n+1,n}\|_{L^2_tL^\infty_x}$, we follow the idea as in Step 1b-1. More precisely, we will use the following decomposition 
		\begin{equation*}
			\left\{
			\begin{aligned}
				\partial_t v^{n+1,n,1} - \nu \Delta v^{n+1,n,1} &=  f^{n+1,n,1} + f^{n+1,n,2} + f^{n+1,n,3} + \mathbb{P}(\mathcal{T}_{\overline{n+1}}-\mathcal{T}_{\overline{n}})(\dot{S}_2(\dot{R}(j^n,B^{n-1})))
				\\
				&\quad + \mathbb{P}\mathcal{T}_{\overline{n+1}}(\dot{S}_2(\dot{R}(j^{n+1,n},B^n))) + \mathbb{P}\mathcal{T}_{\overline{n+1}}(\dot{S}_2(\dot{R}(j^n,B^{n,n-1}))),  
				\\
				\text{div}\, v^{n+1,n,1} &= 0, 
				\quad 
				v^{n+1,n,1}_{|_{t=0}} = v^{n+1,n}_{|_{t=0}},
			\end{aligned}
			\right.
		\end{equation*}
		and 
		\begin{equation*}
			\left\{
			\begin{aligned}
				\partial_t v^{n+1,n,2} - \nu \Delta v^{n+1,n,2} &= \mathbb{P}(\mathcal{T}_{\overline{n+1}}-\mathcal{T}_{\overline{n}})(\dot{T}_{j^n} B^{n-1} + \dot{T}_{B^{n-1}} j^n + (\textnormal{Id}-\dot{S}_2)(\dot{R}(j^n,B^{n-1})))
				\\
				&\quad + \mathbb{P}\mathcal{T}_{\overline{n+1}}(\dot{T}_{j^{n+1,n}} B^n + \dot{T}_{B^n} j^{n+1,n} + (\textnormal{Id}-\dot{S}_2)(R(j^{n+1,n},B^n)))
				\\
				&\quad + \mathbb{P}\mathcal{T}_{\overline{n+1}}(\dot{T}_{j^n} B^{n,n-1} + \dot{T}_{B^{n,n-1}} j^n + (\textnormal{Id} -\dot{S}_2)(\dot{R}(j^n,B^{n,n-1}))),  
				\\
				\text{div} \, v^{n+1,n,2} &= 0,
				\quad
				v^{n+1,n,2}_{|_{t=0}} = 0.
			\end{aligned}
			\right.
		\end{equation*}
		Therefore, similar to \eqref{I_21_21}-\eqref{I_21_22} by using Lemma \ref{lem_paraproduct} and \cite[Corollary 2.55]{Bahouri-Chemin-Danchin_2011}, 
		we have the following estimates for $t > 0$, $C = C(d,s)$ and $s' \in (0,1)$ (other terms can be similarly defined and easily bounded) 
		\begin{align*}
			I^{n+1,n,1}_4 &:= \|(\mathcal{T}_{\overline{n+1}}-\mathcal{T}_{\overline{n}})(\dot{S}_2(\dot{R}(j^n,B^{n-1})))\|_{L^1(0,t;L^2)};
			\\
			I^{n+1,n,1}_6 &:= \|\dot{S}_2(\dot{R}(j^n,B^{n,n-1}))\|_{L^1(0,t;L^2)} 
			\leq C \|j^n\|_{L^2(0,t;L^2)}\|B^{n,n-1}\|_{L^2(0,t;\dot{H}^{1,0})};
			\\
			I^{n+1,n,2}_3 &:= \|(\mathcal{T}_{\overline{n+1}}-\mathcal{T}_{\overline{n}})(\textnormal{Id}-\dot{S}_2)(\dot{R}(j^n,B^{n-1}))\|_{L^2(0,t;\dot{B}^{-1}_{2,1})}.
		\end{align*}
		It remains to bound the terms $I^{n+1,n,1}_4$, $\|B^{n,n-1}\|_{L^2(0,t;\dot{H}^{1,0})}$ in the estimate of $I^{n+1,n,1}_6$ and $I^{n+1,n,2}_3$. We first focus on the norm $\|B^{n+1,n}\|_{L^2(0,t;\dot{H}^{1,0})}$ for $t > 0$. By using the definition of $\dot{H}^{1,0}$ (see Appendix), it is enough to bound the norm $\|B^{n+1,n}\|_{L^2(0,t;\dot{H}^1)}$. More precisely, similar to the previous parts, it follows from the Maxwell system 
		for the difference
		\begin{equation*}
			\left\{
			\begin{aligned}
				\frac{1}{c} \partial_t E^{n+1,n} - \nabla \times B^{n+1,n} &= - j^{n+1,n},
				\\
				\frac{1}{c} \partial_t B^{n+1,n} + \nabla \times E^{n+1,n} &= 0,
				\\
				j^{n+1,n} + \kappa \mathbb{P}\mathcal{T}_{\overline{n+1}}(j^{n} \times B^{n+1}) &=  \kappa \mathbb{P}\mathcal{T}_{\overline{n}}(j^{n-1} \times B^n) 
				+ \sigma(cE^{n+1} + \mathbb{P}\mathcal{T}_{\overline{n+1}}(v^{n+1} \times B^{n})) 
				\\
				&\quad - \sigma(cE^n + \mathbb{P}\mathcal{T}_{\overline{n}}(v^n \times B^{n-1})),
			\end{aligned}
			\right.
		\end{equation*}
		by testing $-(\nabla \times B^{n+1,n},\nabla \times E^{n+1,n})$, and using $c \geq 1$ and $\textnormal{div}\, B^{n+1,n} = \textnormal{div}\, E^{n+1,n} = 0$ that for $\tau \in (0,\infty)$ 
		\begin{align} \nonumber
			\|B^{n+1,n}\|^2_{L^2(0,\tau;\dot{H}^1)} 
			&\leq \|E^{n+1,n}(\tau)\|_{L^2} \|B^{n+1,n}(\tau)\|_{\dot{H}^1} + \|E^{n+1,n}(0)\|_{L^2}\|B^{n+1,n}(0)\|_{\dot{H}^1} 
			\\ \label{B_n+1n_L2H1}
			&\quad + \|E^{n+1,n}\|^2_{L^2(0,\tau;\dot{H}^1)} + \|j^{n+1,n}\|_{L^2(0,\tau;L^2)} \|B^{n+1,n}\|_{L^2(0,\tau;\dot{H}^1)}.
		\end{align}
		We now estimate the norm $\|E^{n+1,n}\|_{L^2(0,\tau;\dot{H}^1)}$ for $\tau > 0$. Similarly, by taking the cross product both sides of the above Maxwell system and testing the result system by $(\nabla \times E^{n+1,n},\nabla \times B^{n+1,n})$, it follows that
		\begin{equation*}
			\frac{1}{2}\frac{d}{dt}\|(\nabla \times E^{n+1,n},\nabla \times B^{n+1,n})\|^2_{L^2}  + c^2 \sigma \|\nabla \times E^{n+1,n}\|^2_{L^2} =: \sum^6_{i=1} E^{n+1,n}_i.
		\end{equation*}
		Therefore, by using $\textnormal{div}\, E^{n+1,n} = 0$, for $t > 0$,  $c \geq 1$ and $C = C(d,\kappa,\sigma,s)$
		\begin{align} \nonumber
			\|E^{n+1,n}\|^2_{L^2(0,t;\dot{H}^1)} &\leq C \int^t_0 \|j^{n,n-1}\|^2_{H^r} \|B^{n+1}\|^2_{H^s} + \|j^{n-1}\|^2_{H^s}\|B^{n+1,n}\|^2_{H^r} \,d\tau
			\\ \nonumber
			&\quad + C \int^t_0 \left(\|\nabla v^{n+1,n}\|^2_{H^{r-1}} + \|v^{n+1,n}\|^2_{L^\infty}\right) \|B^n\|^2_{H^s}  \,d\tau
			\\ \nonumber
			&\quad + C \int^t_0  \left(\|\nabla v^n\|^2_{L^2} + \|v^n\|^2_{L^\infty}\right) \|B^{n,n-1}\|^2_{H^r}  \,d\tau 
			\\ \label{E_n+1n_L2H1}
			&\quad + C \overline{n}^{-2(s-1)}\|\Gamma_0\|^2_{X^s}.
		\end{align}
		Similarly, we also find that
		\begin{align} \nonumber
			\|E^{n+1,n}\|^2_{L^2(0,t;L^2)} &\leq C \int^t_0 \|j^{n,n-1}\|^2_{H^r} \|B^{n+1}\|^2_{H^s} + \|j^{n-1}\|^2_{H^s}\|B^{n+1,n}\|^2_{H^r} \,d\tau
			\\ \nonumber
			&\quad + C \int^t_0  \|v^{n+1,n}\|^2_{L^\infty} \|B^n\|^2_{H^s} +  \|v^n\|^2_{L^\infty} \|B^{n,n-1}\|^2_{H^r}  \,d\tau
			\\ \label{E_n+1n_L2L2}
			&\quad + C \overline{n}^{-2s}\|\Gamma_0\|^2_{X^s}.
		\end{align}
		Furthermore, the two remaining terms $I^{n+1,n,1}_4$ and $I^{n+1,n,2}_3$ can be bounded as follows. By applying Lemma \ref{lem_paraproduct}, and using \eqref{FT} and \eqref{FT_B}, we find that for $t > 0$
		\begin{align} 
			I^{n+1,n,1}_4 
			\label{I_n+1n1_4}
			&\leq C(s) \overline{n}^{-(s-1)} \|j^n\|_{L^2(0,t;L^2)} \|B^{n-1}\|_{L^2(0,t;\dot{H}^s)};
			\\ 
			I^{n+1,n,2}_3 
			\label{I_n+1n2_3}
			&\leq C(s) \overline{n}^{-s} \|j^n\|_{L^2(0,t;L^2)}  \|B^{n-1}\|_{L^\infty(0,t;\dot{H}^s)}.
		\end{align}
		Therefore, we now can estimate the norm  $\|v^{n+1,n}\|_{L^2(0,t;L^\infty)}$ for $t > 0$ as follows. Similar to Step 1b-1, it follows from the above estimates of $I^{n+1,n,1}_i$ and $I^{n+1,n,2}_k$ for $i \in \{1,...,6\}$ and $k \in \{1,...,9\}$,  \eqref{I_n+1n1_4}-\eqref{I_n+1n2_3}, and by choosing the right-hand side of \eqref{En_global} to be much smaller, we obtain that for $t > 0$ and $C = C(d,\kappa,\nu,\sigma,s)$
		\begin{equation} \label{v_n+1,n_L2Linfty}
			\|v^{n+1,n}\|^2_{L^2(0,t;L^\infty)} \leq  C \overline{n}^{-s_{01}}\|\Gamma_0\|^2_{X^s} + \frac{1}{32C_1} \mathcal{E}^{n,n-1}_r(t) + \frac{1}{4C_1} \mathcal{E}^{n+1,n}_r(t),
		\end{equation}
		where $s_{01} := \min\{2(s-1),4s'\}$ for some $s' \in (0,1)$ and $C_1$ is the constant given in \eqref{Gamma_n+1n_1}. Furthermore, it follows from \eqref{B_n+1n_L2H1}-\eqref{E_n+1n_L2L2} that (we may choose smaller initial data) for $t > 0$
		\begin{multline} \label{B_n+1n_L2H1_2}
			\delta_0 \|(E^{n+1,n},B^{n+1,n})\|^2_{L^2(0,t;\dot{H}^1)} + \|E^{n+1,n}\|^2_{L^2(0,t;L^2)} 
			\\
			\leq  C\overline{n}^{-2(s-1)} \|\Gamma_0\|^2_{X^s} +  3\delta_0 \mathcal{E}^{n+1,n}_r(t) 
			+ \frac{1}{32} \mathcal{E}^{n,n-1}_r(t) + \frac{1}{8} \mathcal{E}^{n+1,n}_r(t).
		\end{multline}
		By combining the two previous inequalities \eqref{v_n+1,n_L2Linfty}-\eqref{B_n+1n_L2H1_2} and choosing $\delta_0 = \frac{1}{24}$, we find that for $t > 0$ and $C = C(d,\kappa,\nu,\sigma,s)$
		\begin{multline} \label{v_n+1n_L2Linfty_2}
			C_1\|v^{n+1,n}\|^2_{L^2(0,t;L^\infty)} + \delta_0 \|(E^{n+1,n},B^{n+1,n})\|^2_{L^2(0,t;\dot{H}^1)} + \|E^{n+1,n}\|^2_{L^2(0,t;L^2)}
			\\
			\leq  C \overline{n}^{-s_{01}}\|\Gamma_0\|^2_{X^s} + \frac{1}{16} \mathcal{E}^{n,n-1}_r(t) + \frac{1}{2} \mathcal{E}^{n+1,n}_r(t).
		\end{multline}
		
		\textbf{Step2a-2b: The case $d=3$.} Similar to Step 1b-2, we decompose \eqref{v_n+1n} as follows
		\begin{align*} 
			\partial_t v^{n+1,n,1} - \nu \Delta v^{n+1,n,1} &= \sum^{3}_{i=1} f^{n+1,n,i}, \quad \textnormal{div}\, v^{n+1,n,1} = 0, \quad v^{n+1,n,1}_{|_{t=0}} = (\mathcal{T}_{\overline{n+1}}-\mathcal{T}_{\overline{n}})(v_0),
			\\
			\partial_t v^{n+1,n,2} - \nu \Delta v^{n+1,n,2} &= \sum^{6}_{i=4} f^{n+1,n,i}, \quad \textnormal{div}\, v^{n+1,n,2} = 0, \quad v^{n+1,n,2}_{|_{t=0}} = 0,
		\end{align*}
		which yields as in \eqref{v_21_L2Linfty_3}-\eqref{v_22_L2Linfty_2} for $t \in (0,\infty)$ and $C = C(\nu)$
		\begin{align*}
			\|v^{n+1,n,1}\|_{L^2(0,t;L^\infty)} &\leq  C\|f^{n+1,n,1} + f^{n+1,n,2} + f^{n+1,n,3}\|_{L^1(0,t;\dot{H}^\frac{1}{2})};
			\\
			\|v^{n+1,n,2}\|_{L^2(0,t;L^\infty)} &\leq  C\|f^{n+1,n,4} + f^{n+1,n,5} + f^{n+1,n,6}\|_{L^2(0,t;\dot{B}^{-\frac{1}{2}}_{2,1}).}
		\end{align*} 
		By using 
		\cite[Corollary 2.55]{Bahouri-Chemin-Danchin_2011} and the following estimate 
		(its proof is simple) 
		\begin{equation} \label{FT_B}
			\|\mathcal{T}_m f - f\|_{\dot{B}^{s_1}_{2,1}} \leq C m^{-s_2} \|f\|_{\dot{B}^{s_1+s_2}_{2,1}}.
		\end{equation}
 		and \eqref{En_global} for small data, we find that for some $s'' \in (\frac{1}{2},\frac{3}{2})$ and for $C = C(d,\kappa,\nu,\sigma,s)$
 		\begin{equation} \label{v_n+1n_L2Linfty}
 			C_1\|v^{n+1,n}\|^2_{L^2(0,t;L^\infty)} \leq C\overline{n}^{s_{02}} \|\Gamma_0\|^2_{X^s} + \frac{1}{32} \mathcal{E}^{n,n-1}_r(t) + \frac{1}{4} \mathcal{E}^{n+1,n}_r(t),
 		\end{equation}
 		where $s_{02} := \min\{2(s-\frac{3}{2}),2(2s''-1)\}$ and $C_1$ is the constant given in \eqref{Gamma_n+1n_1}.
		
		\textbf{Step 2a-2c: Conclusion.} Therefore, it follows from \eqref{Gamma_n+1n_1}, \eqref{B_n+1n_L2H1_2}, \eqref{v_n+1n_L2Linfty_2} and \eqref{v_n+1n_L2Linfty} that for $t > 0$ and for $C = C(d,\kappa,\nu,\sigma,s)$
 		\begin{equation} \label{E_n+1n_nn-1}
 			\mathcal{E}^{n+1,n}_r(t) \leq C2^{-ns_0} \|\Gamma_0\|^2_{X^s} + \frac{1}{4} \mathcal{E}^{n,n-1}_r(t),
 		\end{equation}
		where $s_0 := \min\{2(s-r),s_{01},s_{02}\}$. Therefore, by using the usual argument (for example, see \cite{Brezis_2011}), we can conclude from \eqref{E_n+1n_nn-1} that $\Gamma^n$ and $(\nabla v^n, v^n, \nabla E^n,E^n,\nabla B^n,j^n)$ are Cauchy sequences in $L^\infty(0,\infty;H^{r-1} \times H^r \times H^r)$ and $L^2(0,\infty;H^{r-1} \times L^\infty \times L^2 \times L^2 \times L^2 \times H^r)$ for $r \in (\frac{d}{2},s)$, respectively.
		
		\textbf{Step 2b: Passing to the limit as $n \to \infty$.} Now, we are ready to pass to the limit as $n \to \infty$ in \eqref{NSM-GO_n}. From the previous substep, there exists $(v,E,B,j)$ such that as $n \to \infty$
		\begin{align*}
			&&(v^n,E^n,B^n) &\to (v,E,B) &&\text{in} \quad L^\infty(0,\infty;H^{r-1} \times H^r \times H^r),&&
			\\
			&&(\nabla v^n, \nabla E^n, \nabla B^n) &\to (\nabla v, \nabla E, \nabla B)  &&\text{in} \quad L^2(0,\infty;H^{r-1} \times  L^2 \times L^2),&&
			\\
			&&(v^n,E^n,j^n) &\to (v,E,j)  &&\text{in} \quad L^2(0,\infty; L^\infty \times  H^r \times H^r),&&
			\\
			&&(\Delta v^n, \nabla \times E^n,\nabla \times B^n) &\to (\Delta v, \nabla \times E,\nabla \times B) &&\text{in} \quad L^2(0,\infty;H^{r-2} \times H^{r-1} \times H^{r-1}),&&
		\end{align*}
		where we also used \eqref{En_global} and interpolation inequalities. Moreover, for the nonlinear terms as $n \to \infty$, it follows that 
		\begin{align*}
			&&\mathcal{T}_{\overline{n}}(v^{n-1} \cdot \nabla v^n,v^n \times B^{n-1}) &\to (v \cdot \nabla v,v \times B) &&\text{in} \quad L^2(0,\infty;H^{r-2} \times H^{r}),&&
			\\
			&&\mathcal{T}_{\overline{n}}(j^n \times B^{n-1}, j^{n-1} \times B^n) &\to (j \times B, j \times B) &&\text{in} \quad L^2(0,\infty;H^{r}),&&
		\end{align*}
		since for any $t \in (0,\infty)$, the following estimates hold (similar to other nonlinear terms)
		\begin{align*}
			\textnormal{NL}_1 &:= \int^t_0 \|\mathcal{T}_{\overline{n}}(v^{n-1} \cdot \nabla v^n) - v \cdot \nabla v\|^2_{H^{r-2}}  \,d\tau 
			\\
			&\leq C\overline{n}^{-(s-r)} \int^t_0 \|v^{n-1}\|^2_{H^{s-1}} \|v^n\|^2_{L^\infty} + \|v^{n-1}\|^2_{L^\infty} \|v^n\|^2_{H^{s-1}} \,d\tau 
			\\
			&\quad + \int^t_0 \|v^{n-1}-v\|^2_{H^{r-1}} \|v^n\|^2_{L^\infty} + \|v^{n-1}-v\|^2_{L^\infty} \|v^n\|^2_{H^{r-1}} \,d\tau 
			\\
			&\quad + \int^t_0 \|v\|^2_{H^{r-1}}\|v^n-v\|^2_{L^\infty} + \|v\|^2_{L^\infty} \|v^n-v\|^2_{H^{r-1}} \,d\tau.
		\end{align*}
		Thus, we obtain the form of $j$ as in \eqref{NSM-GO-limit} below. Next, we focus on the time derivative terms, it can be seen from \eqref{NSM-GO_n} and the uniform bounds of $(v^n,E^n,B^n,j^n)$ that $(\partial_t v^n, \partial_t E^n, \partial_t B^n)$ is uniformly bounded in $L^2(0,\infty;H^{s-2} \times H^{s-1} \times H^{s-1})$, which yields
		\begin{equation*}
			(\partial_t v^{n_k}, \partial_t E^{n_k}, \partial_t B^{n_k}) \rightharpoonup (\partial_t v, \partial_t E, \partial_t B) \qquad \text{in} \quad  L^2(0,\infty;H^{s-2} \times H^{s-1} \times H^{s-1}).
		\end{equation*}
		Furthermore, \eqref{NSM-GO_n} and the above strong convergences imply that as $n \to \infty$
		\begin{equation*}
			(\partial_t v^n, \partial_t E^n, \partial_t B^n) \to (\partial_t v, \partial_t E, \partial_t B) \qquad \text{in} \quad  L^2(0,\infty;H^{r-2} \times H^{r-1} \times H^{r-1}).
		\end{equation*}
		Moreover, for the initial data as $n \to \infty$
		\begin{equation*}
			(v^n,E^n,B^n)_{|_{t=0}} = \mathcal{T}_{\overline{n}}(v_0,E_0,B_0) \to (v_0,E_0,B_0) \quad \text{in} \quad H^{s-1} \times H^s \times H^s.
		\end{equation*}
		Therefore, after passing to the limit as $n \to \infty$ in \eqref{NSM-GO_n}, we obtain the following system for $r \in (\frac{d}{2},s)$
		\begin{equation} \label{NSM-GO-limit}
			\left\{
			\begin{aligned}
				\partial_t v + \mathbb{P}(v \cdot \nabla v) &= \nu \Delta v + \mathbb{P}(j \times B)  &\text{in} \quad L^2(0,\infty;H^{r-2}),
				\\
				\frac{1}{c} \partial_t E - \nabla \times B &= -j &\text{in} \quad L^2(0,\infty;H^{r-1}),
				\\
				\frac{1}{c} \partial_t B + \nabla \times E &= 0 &\text{in} \quad L^2(0,\infty;H^{r-1}),
				\\
				j + \kappa\mathbb{P}(j \times B) &= \sigma(c E + \mathbb{P}(v \times B)) &\text{in} \quad L^2(0,\infty;H^{r}),
				\\
				\textnormal{div}\, v = \textnormal{div}\, E &= \textnormal{div}\, B = \textnormal{div}\, j = 0 &\text{in} \quad L^2(0,\infty;H^{r-1}),
				\\
				(v,E,B)_{|_{t=0}} &= (v_0,E_0,B_0) &\text{in} \quad H^{s-1} \times H^s \times H^s.
			\end{aligned}
			\right.
		\end{equation}
		In addition, for $\Gamma := (v,E,B)$, $t > 0$, $C = C(d,\kappa,\nu,\sigma,s)$, $s'' \in [0,s]$ and $s' \in [1,s]$
		\begin{equation} \label{E_global}
			\|\Gamma(t)\|^2_{X^s} +  \int^t_0 \|\nabla v\|^2_{H^{s-1}} + \|v\|^2_{L^\infty} + \|E\|^2_{\dot{H}^{s''}} + \|B\|^2_{\dot{H}^{s'}} + \|j\|^2_{H^s} \,d\tau \leq C \|\Gamma_0\|^2_{X^s}.
		\end{equation}
		It follows from \eqref{NSM-GO-limit} that the pressures $\pi_1$ and $\pi_2$ can be recovered in the usual way (see \cite{Temam_2001}) in which we will obtain \eqref{NSM-GO} in some particular sense. We omit further details.
		
		\textbf{Step 3: Uniqueness.} The proof of this step is standard, we omit the details.

	\end{proof}
	
	We continue with the proof of Part $(ii)$ as follows.
		
	\begin{proof}[Proof of Theorem \ref{theo_global_small}-$(ii)$] 
		The idea of proof comes from that of \cite[Corollary 1.3]{Arsenio-Gallagher_2020}, where the author focused on \eqref{NSM} with $\alpha = 1$ in two dimensions. By using the global well-posedness given as in Part $(i)$, we aim to apply the same idea to \eqref{NSM-GO} with $\alpha = 1$ and $d \in \{2,3\}$. 
		By applying Part $(i)$ to \eqref{NSM-GO} under using the smallness assumption of the initial data, we know that there exists a unique global solution $(v^c,E^c,B^c)$ with uniform bound in terms of $c$ as in \eqref{E_global}, i.e., there exists a $c$-independent constant $C = C(d,\kappa,\nu,\sigma,s)$ such that for $s'' \in [0,s]$, $s' \in [1,s]$ and  $t > 0$ 
		\begin{equation} \label{NSM-GO-En-c}
			\|\Gamma^c(t)\|^2_{X^s} + \int^t_0 \|\nabla v^c\|^2_{H^{s-1}} + \|v^c\|^2_{L^\infty} + \|E^c\|^2_{\dot{H}^{s''}} + \|B^c\|^2_{\dot{H}^{s'}} + \|j^c\|^2_{H^s} \,d\tau \leq C \|\Gamma^c_0\|^2_{X^s},
		\end{equation}
		where $\Gamma^c := (v^c,E^c,B^c)$ is uniformly bounded in $X^s = H^{s-1} \times H^s \times H^s$ for $s > \frac{d}{2}$.
		Therefore, \eqref{NSM-GO-En-c} implies that there exists $(v,E,B,j)$ such that up to an extraction of a subsequence (with using the same notation) as $c \to \infty$
		\begin{align*}
			&&(v^c,E^c,B^c) &\overset{\ast}{\rightharpoonup}  (v,E,B) &&\text{in} \quad L^\infty_tL^2_x,&&
			\\
			&&(\nabla v^c,j^c) &\rightharpoonup (v,j) &&\text{in} \quad L^2_tL^2_x.&&
		\end{align*}
		In addition, we find from \eqref{NSM-GO} that
		\begin{align*}
			(\partial_t v^c,\partial_tB^c) \quad \text{is uniformly bounded in} \quad L^2_{\textnormal{loc},t} H^{-1}_{\textnormal{loc},x}
		\end{align*}
		and by using the Aubin-Lions lemma (see \cite{Boyer-Fabrie_2013,Temam_2001}) as $c \to \infty$
		\begin{equation*}
			(v^c,B^c) \to (v,B) \quad \text{ in}\quad L^2_{\textnormal{loc},t} L^2_{\textnormal{loc},x}.
		\end{equation*}
		As usual, for $\phi,\varphi \in C^\infty_0([0,\infty) \times \mathbb{R}^d;\mathbb{R}^3)$ with $\textnormal{div}\,\phi = 0$, the weak formula of \eqref{NSM-GO} is given by 
		\begin{align*}
			&\int^\infty_0 \int_{\mathbb{R}^d} v^c \cdot \partial_t\phi + (v^c \otimes v^c) : \nabla \phi - \nu \nabla v^c : \nabla \phi + (j^c \times B^c) \cdot \phi \,dxdt = -\int_{\mathbb{R}^d} v^c(0) \cdot \phi(0) \,dx,
			\\
			&\int^\infty_0 \int_{\mathbb{R}^d} \frac{1}{c}E^c \cdot \partial_t\varphi + B^c \cdot (\nabla \times \varphi) - j^c \cdot \varphi \,dxdt = -\int_{\mathbb{R}^d} \frac{1}{c}E^c(0) \cdot \varphi(0) \,dx,
			\\
			&\int^\infty_0 \int_{\mathbb{R}^d} B^c \cdot \partial_t\varphi + [(v^c \times B^c) - \frac{1}{\sigma} j^c - \frac{\kappa}{\sigma} j^c \times B^c] \cdot (\nabla \times \varphi) \,dxdt  = - \int_{\mathbb{R}^d} B^c(0) \cdot \varphi(0) \,dx.
		\end{align*}
		Therefore, we can pass to the limit by using the assumption $(v^c_0,E^c_0,B^c_0) \rightharpoonup (\bar{v}_0,\bar{E}_0,\bar{B}_0)$ in $H^{s-1} \times H^s \times H^s$ and the above strong convergences as $c \to \infty$, to obtain that \eqref{NSM-GO} converges in the sense of distributions to \eqref{HMHD} with $\alpha = \beta = 1$.
	\end{proof}
	
	We conclude the whole proof with the proof of Part $(iii)$ as follows.
	
	\begin{proof}[Proof of Theorem \ref{theo_global_small}-$(iii)$] The proof of this part consists of two steps as follows.
		
		\textbf{Step 1: The GWP of \eqref{NSM-SO} and of \eqref{NSM-GO} for $\Gamma^\kappa$.} The existence of a unique global solution $\Gamma = (v,E,B)$ to \eqref{NSM-SO} with $\Gamma_{|_{t=0}} = \Gamma_0$ can be done as in Part $(i)$ with setting $\kappa = 0$. In addition, it can be seen during the proof of Part $(i)$ that the dependency on $\kappa$ of the constants (especially in Condition \eqref{small_data}) can be given explicitly in the form $\kappa^2 C(d,\nu,\sigma,s)$. Therefore, if $\kappa \in (0,1)$ then we can assume that $C'_1\|\Gamma_0\|_{X^s} \leq 1$, where $C_1 = \kappa^2 C'_1$ and $C_1$ is the constant in Condition \eqref{small_data}, which allows us to establish a similar result as in Part $(i)$ for $\Gamma^\kappa$.
		
		\textbf{Step 2: Estimate of the difference.} It can be seen that $\Gamma^\kappa$ and $\Gamma$ satisfy \eqref{NSM-GO-limit}-\eqref{E_global} with $\kappa \in (0,1)$ and $\kappa = 0$, respectively. 
		Therefore, it follows from \eqref{NSM-GO-limit}-\eqref{E_global} and \cite[Chapter 3, Lemma 1.2]{Temam_2001} that
		\begin{equation*}
			\frac{1}{2} \frac{d}{dt}\|\Gamma^\kappa-\Gamma\|^2_{L^2} + \nu \|\nabla (v^\kappa - v)\|^2_{L^2} + \frac{1}{\sigma} \|j^\kappa-j\|^2_{L^2} =: \sum^3_{i=1} I^\kappa_i.
		\end{equation*}
		By direct computations and denoting $Y^\kappa(t) := \|(v^\kappa-v,E^\kappa-E,B^\kappa-B)(t)\|^2_{L^2}$ with $t > 0$, we find that for $C = C(\nu,\sigma)$
		\begin{equation*}
			\frac{d}{dt}Y^\kappa + \nu \|\nabla(v^\kappa-v)\|^2_{L^2} + \frac{1}{\sigma}\|j^\kappa-j\|^2_{L^2} 
			\leq C\|(|v|^2,j)\|_{L^\infty}Y^\kappa + C\kappa^2 \|j^\kappa\|^2_{H^s} \|B^\kappa\|^2_{L^2},
		\end{equation*}
		which yields 
		\begin{equation*}
			Y^\kappa(t)
			\leq C\kappa^2 \|B^\kappa\|^2_{L^\infty_tL^2_x}\int^t_0 \|j^\kappa\|^2_{H^s} \,d\tau \exp\left\{C\int^t_0 \|(|v|^2,j)\|_{L^\infty}  \,d\tau \right\}.
		\end{equation*}
		Thus, the conclusion follows by using \eqref{E_global} and interpolation inequality for $s' \in [0,s-1)$ and $s'' \in [0,s)$, $t \in (0,T)$ for any $T \in (0,\infty)$ (similar to $(B^\kappa-B)$)
		\begin{align*}
			\|(v^\kappa-v)(t)\|_{H^{s'}} \leq \|(v^\kappa-v)(t)\|^\frac{s-s'-1}{s-1}_{L^2}\|(v^\kappa-v)(t)\|^\frac{s'}{s}_{H^s} &\leq \kappa^\frac{s-s'-1}{s-1}C(T,d,\nu,\sigma,s,\Gamma_0),
			\\
			\|(E^\kappa-E)(t)\|_{H^{s''}} \leq \|(E^\kappa-E)(t)\|^\frac{s-s''}{s}_{L^2}\|(E^\kappa-E)(t)\|^\frac{s''}{s}_{H^s} &\leq \kappa^\frac{s-s'}{s}C(T,d,\nu,\sigma,s,\Gamma_0).
		\end{align*}
	\end{proof}
	
	%
	\section{Proof of Theorem \ref{theo_local}} \label{sec:local}
	%
	
	In this section, we focus on obtaining the local well-posedness of \eqref{NSM-GO} in the inviscid case, i.e., $\nu = 0$ for possibly large initial data. 
	
	\begin{proof}[Proof of Theorem \ref{theo_local}] 
		The idea of proof is similar to that of Theorem \ref{theo_global_small}. However, we are not able to follow the main part of that proof and we should consider the initial data for $v$ and $(E,B)$ in the same space, since we do not have the diffusion term here. 
		
		\textbf{Step 1: Local estimate.} We will use the same approximate system \eqref{NSM-GO_n} without the velocity diffusion term, i.e., $\nu = 0$. We aim to prove that there exists a sufficiently small $T = T(d,\kappa,\sigma,s,\Gamma_0) \in (0,1]$ and solution $\Gamma^n \in C^1([0,T);Y^s_n)$, where $Y^s_n$ will be defined below, to the approximate system \eqref{NSM-GO_n} with $\nu = 0$ for each $n \in \mathbb{N}$  such that for $t \in (0,T)$
		\begin{equation} \label{L-Gamma-n_1}
			\|\Gamma^{n}(t)\|^2_{H^s} + \int^t_0 
			\|j^{n}\|^2_{H^s} \,d\tau \leq C(d,\kappa,\sigma,s)\|\Gamma_0\|^2_{H^s}. 
		\end{equation}
		
		\textbf{Step 1a: The case $n = 1$.} In this case, similar to \eqref{NSM-GO_n} with $n = 1$, 
		there exists a unique solution 
		$\Gamma^1 = (v^1,E^1,B^1) \in C^1([0,T^1_*);Y^s_1)$, where $Y^s_n := H^s_{n,0} \times H^s_{n,0} \times H^s_{n,0}$ for $n \in \mathbb{N}$. In addition, as in the proof of Theorem \ref{theo_global_small}, it follows 
		that $T^1_* = \infty$ since for $t \in (0,T^1_*)$
		\begin{equation} \label{L-Gamma-1}
			\|\Gamma^1(t)\|^2_{H^s} +  \int^t_0 
			 \|j^1\|^2_{H^s} \,d\tau \leq C(d,\sigma,s)\|\Gamma_0\|^2_{H^s},
		\end{equation}
		which is \eqref{L-Gamma-n_1} with $n = 1$.
		
		\textbf{Step 1b: The case $n = 2$.} Similar to \eqref{NSM-GO_n} with $n = 1$ and $\nu = 0$, 
		there exists a unique solution $\Gamma^2 := (v^2,E^2,B^2) \in C^1([0,T^2_*);Y^s_2)$ such that 
		\begin{equation*}
			\frac{1}{2}\frac{d}{dt} \|\Gamma^2\|^2_{H^s} + \frac{1}{\sigma}\|j^2\|^2_{H^s} =: \sum^5_{i=1} I^2_i,
		\end{equation*}
		where since $s > \frac{d}{2} + 1$, for some $\epsilon \in (0,1)$
		\begin{align*}
			I^2_1 &= \int_{\mathbb{R}^d} [\Lambda^s (v^1 \cdot \nabla v^2) - v^1 \cdot \nabla \Lambda^s v^2]  \cdot \Lambda^s v^2\,dx
			\leq C(d,s) \|v^1\|_{H^s} \|v^2\|^2_{H^s},
			\\
			I^2_2 &= \int_{\mathbb{R}^d} \Lambda^s(j^2 \times B^1) \cdot \Lambda^s v^2 \,dx \leq \frac{\epsilon}{\sigma} \|j^2\|^2_{H^s} + C(d,\epsilon,\sigma,s) \|B^1\|^2_{H^s}\|v^2\|^2_{H^s};
			\\
			I^2_3 &= \int_{\mathbb{R}^d} \Lambda^s j^2 \cdot \Lambda^s(v^2 \times B^1)\,dx \leq \frac{\epsilon}{\sigma} \|j^2\|^2_{H^s} + C(d,\epsilon,\sigma,s) \|B^1\|^2_{H^s}\|v^2\|^2_{H^s};
			\\
			I^2_4 &= -\frac{\kappa}{\sigma}\int_{\mathbb{R}^d} \Lambda^s j^2 \cdot \Lambda^s (j^1 \times B^2)\,dx
			\leq \frac{\epsilon}{\sigma} \|j^2\|^2_{H^s} + C(d,\epsilon,\kappa,\sigma,s) \|j^1\|^2_{H^s}\|B^2\|^2_{H^s};
			\\
			I^2_5 &= -\frac{\kappa}{\sigma}\int_{\mathbb{R}^d} j^2 \cdot (j^1 \times B^2) \,dx \leq \frac{\epsilon}{\sigma} \|j^2\|^2_{H^s} + C(d,\epsilon,\kappa,\sigma,s) \|j^1\|^2_{H^s}\|B^2\|^2_{H^s},
		\end{align*}
		here we used the following well-known commutator estimate (for example, see \cite{Li_2019}) for $s_0 \in \mathbb{R}, s_0 > 0$, $p \in (1,\infty)$, $p_i,q_i \in (1,\infty]$, $i \in \{1,2\}$ with $\frac{1}{p_i} + \frac{1}{q_i} = \frac{1}{p}$ for $C = C(d,s_0,p,p_i,q_i)$
		\begin{equation*}
			\|\Lambda^{s_0}(fg) - f\Lambda^{s_0} g\|_{L^p(\mathbb{R}^d)} \leq C \left(\|\Lambda^{s_0} f\|_{L^{p_1}(\mathbb{R}^d)} \|g\|_{L^{q_1}(\mathbb{R}^d)} + \|\nabla f\|_{L^{p_2}(\mathbb{R}^d)}\|\Lambda^{s_0-1} g\|_{L^{q_2}(\mathbb{R}^d)}\right).
		\end{equation*}
		Therefore, by choosing $\epsilon = \frac{1}{8}$, we find that 
		\begin{equation} \label{L-Gamma-2}
			\frac{d}{dt} \|\Gamma^2\|^2_{H^s} + \frac{1}{\sigma} \|j^2\|^2_{H^s} \leq C(d,\kappa,\sigma,s)\left(\|v^1\|_{H^s} + \|(B^1,j^1)\|^2_{H^s}\right)\|(v^2,B^2)\|^2_{H^s},
		\end{equation}
		which by using \eqref{L-Gamma-1} leads to $T^2_* = \infty$. Choosing $T_* = T_*(d,\kappa,\sigma,s,\Gamma_0) \leq 1$, which satisfies
		\begin{equation*}
			C(d,\kappa,\sigma,s)\left(T_* \|\Gamma_0\|_{H^s} + T_*\|\Gamma_0\|^2_{H^s} + \int^{T_*}_0 \|j^1\|^2_{H^s}\,d\tau\right) \leq 1.
		\end{equation*}
		As in the previous step by using \eqref{L-Gamma-1}, it comes from \eqref{L-Gamma-2} that for $t \in (0,T_*)$
		\begin{equation} \label{L-Gamma-2_1}
			\|\Gamma^2(t)\|^2_{H^s} + \int^t_0 
			\|j^2\|^2_{H^s} \,d\tau \leq C(d,\kappa,\sigma,s)\|\Gamma_0\|^2_{H^s}. 
		\end{equation}
		which is \eqref{L-Gamma-n_1} with $n = 2$.
				
		\textbf{Step 1c: The case $n > 2$ with an induction argument.} Assume that $n = k > 2$. 
		We now assume an induction hypothesis that	\eqref{L-Gamma-n_1} holds for $n = k$, i.e., there exists some $T_{**} > 0$ and solution $\Gamma^k := (v^k,E^k,B^k) \in C^1([0,T_{**});Y^s_k)$ such that for $t \in (0,T_{**})$ 
		\begin{equation} \label{L-Gamma-k}
			\|\Gamma^k(t)\|^2_{H^s} + \int^t_0 
			\|j^k\|^2_{H^s} \,d\tau \leq C(d,\kappa,\sigma,s)\|\Gamma_0\|^2_{H^s}. 
		\end{equation}
		The above assumption is true in the case $n = 2$ in Step 1b, see \eqref{L-Gamma-2_1}. 
		Similar to Step 1b, the induction hypothesis assumption gives us the local existence and uniqueness of $\Gamma^{k+1} := (v^{k+1},E^{k+1},B^{k+1}) \in C^1([0,T^{k+1}_*);Y^s_{k+1})$ for some $T^{k+1}_* \in (0,T_{**}]$, and then as in \eqref{L-Gamma-2} (with replacing $(\Gamma^2,j^2)$ and $(v^1,B^1,j^1)$ by $(\Gamma^{k+1},j^{k+1})$ and $(v^k,B^k,j^k)$ respectively) we obtain $T^{k+1}_* = T_{**}$. Furthermore, by choosing some $T_{***} \leq T_{**}$ sufficiently small and using \eqref{L-Gamma-k}, we find that for $t \in (0,T_{***})$
		\begin{equation} \label{L-Gamma-k+1}
			\|\Gamma^{k+1}(t)\|^2_{H^s} + \int^t_0
			\|j^{k+1}\|^2_{H^s} \,d\tau \leq C(d,\kappa,\sigma,s)\|\Gamma_0\|^2_{H^s}. 
		\end{equation}
		Therefore, there exists $T = T(d,\kappa,\sigma,s,\Gamma_0) > 0$ sufficiently small such that for $t \in (0,T)$ and $n \in \mathbb{N}$
		\begin{equation} \label{L-Gamma-n}
			\|\Gamma^{n}(t)\|^2_{H^s} + \int^t_0 
			\|j^{n}\|^2_{H^s} \,d\tau \leq C(d,\kappa,\sigma,s)\|\Gamma_0\|^2_{H^s}. 
		\end{equation}
		
		\textbf{Step 2: Passsing to the limit as $n \to \infty$ and uniqueness.} It maybe not be suitable to follow Step 2 in the proof of Theorem \ref{theo_global_small}-$(i)$. Therefore, by using the uniform bound \eqref{L-Gamma-n}, we can pass to the limit in the sense of distributions from \eqref{NSM-GO_n} with $\nu = 0$, where the limit solution satisfies
		\begin{equation} \label{L-Gamma}
			\|\Gamma(t)\|^2_{H^s} + \int^t_0 
			\|j\|^2_{H^s} \,d\tau \leq C(d,\kappa,\sigma,s)\|\Gamma_0\|^2_{H^s},
		\end{equation}
		for $t \in (0,T_0)$ for some $T_0 = T_0(d,\kappa,\sigma,s,\Gamma_0) \in (0,1]$. Since the limiting solution is smooth enough then the proof of the uniqueness can be done as that of Theorem \ref{theo_global_small}. We omit further details. 
		
		\textbf{Step 3: Proof of Part $(ii)$.} By repeating the two previous steps in the case $\nu > 0$, but without using this advantage, we obtain the existence of a unique solution $\Gamma^\nu := (v^\nu,E^\nu,B^\nu)$ to \eqref{NSM-GO} in some time interval $(0,T_{02})$ with $T_{02} = T_{02}(d,\kappa,\sigma,s,\Gamma_0) \in (0,1]$ and $\Gamma^\nu_{|_{t=0}} = \Gamma_0$. Assume that $\Gamma := (v,E,B)$ is a unique solution to \eqref{NSM-GO} with $\nu = 0$ and $\Gamma_{|_{t=0}} = \Gamma_0$ given as in Part $(i)$ in $(0,T_{01})$.
		Therefore, it follows that (since the solutions $\Gamma^\nu$ and $\Gamma$ here are smooth enough, then the difference can be taken as a test function, see \cite[Chapter 3, Lemma 1.2]{Temam_2001})
		\begin{equation*}
			\frac{1}{2} \frac{d}{dt}\|\Gamma^\nu-\Gamma\|^2_{L^2} + \nu \|\nabla (v^\nu - v)\|^2_{L^2} + \frac{1}{\sigma} \|j^\nu-j\|^2_{L^2} =: \sum^4_{i=1} I^\nu_i. 
		\end{equation*}
		By direct computations and denoting $Y^\nu(t) := \|(v^\nu-v,E^\nu-E,B^\nu-B)(t)\|^2_{L^2}$ with $t \in (0,T_0)$, we find that
		\begin{equation*}
			\frac{d}{dt}Y^\nu + \nu \|\nabla(v^\nu-v)\|^2_{L^2} + \frac{1}{\sigma}\|j^\nu-j\|^2_{L^2} 
			\leq C(\kappa,\sigma)\left(\|(|v|^2,|j|^2,j)\|_{L^\infty} + 1\right)Y^\nu + \nu^2 \|\Delta v\|^2_{L^2},
		\end{equation*}
		which yields 
		\begin{equation*}
			Y^\nu(t)
			\leq \nu^2 \int^t_0 \|\Delta v\|^2_{L^2} \,d\tau \exp\left\{C(\kappa,\sigma)\int^t_0 \|(|v|^2,|j|^2,j)\|_{L^\infty}  + 1 \,d\tau \right\}.
		\end{equation*}
		Thus, the conclusion follows by using \eqref{L-Gamma} and interpolation inequality for $s' \in [0,s)$ (similar to $(E^\nu-E,B^\nu-B)$) and $t \in (0,T_0)$
		\begin{equation*}
			\|(v^\nu-v)(t)\|_{H^{s'}} \leq \|(v^\nu-v)(t)\|^\frac{s-s'}{s}_{L^2}\|(v^\nu-v)(t)\|^\frac{s'}{s}_{H^s} \leq \nu^\frac{s-s'}{s}C(d,\kappa,\sigma,s,\Gamma_0).
		\end{equation*}
		
		\textbf{Step 3: Proof of Part $(iii)$ and $(iv)$.} The proof can be done as those of Theorem \ref{theo_global_small}-$(ii)$ and $(iii)$, respectively. We omit the details and end the proof.
	\end{proof} 
	
	%
	\section{Proof of Theorem \ref{theo-MH}} \label{sec:theo-MH}
	%
	
	In this section, we provide a proof of Theorem \ref{theo-MH}.
	
	\begin{proof}[Proof of Theorem \ref{theo-MH}-$(i)$]
		The proof shares a similar idea as that of in the proof of Theorem \ref{theo_global_small} and consists of the following steps.
		
		\textbf{Step 1: Approximate system, local and global existence.} We will use the same approximate system as in the proof of Theorem \ref{theo_global_small} and we need to change the definition of the function $F^n$ as follows for $\Gamma^{\sigma,n} := (v^{\sigma,n},E^{\sigma,n},B^{\sigma,n})$
		\begin{equation*}
			F^{n} :\, W^s_n := H^{s-1}_{n,0} \times (H^s_{n,0} \cap \dot{H}^{-1}) \times (H^s_{n,0} \cap \dot{H}^{-1}) \to W^s_n
			\quad \text{with}\quad 
			\Gamma^{\sigma,n} \mapsto F^{n}(\Gamma^{\sigma,n}).
		\end{equation*}
		The norm in $W^s_n$ is defined by
		\begin{equation*}
			\|\Gamma^{\sigma,n}\|^2_{W^s_n} := \|v^{\sigma,n}\|^2_{H^{s-1}} + \|(E^{\sigma,n},B^{\sigma,n})\|^2_{H^s} + \|(E^{\sigma,n},B^{\sigma,n})\|^2_{\dot{H}^{-1}}.
		\end{equation*}
		We now mainly focus on the $\dot{H}^{-1}$ estimate of $(E^{\sigma,n},B^{\sigma,n})$ at each level $n \in \mathbb{N}$ as follows.
		
		\textbf{Step 1a: The case $n = 1$.} We first see that $F^1$ is well-defined and locally Lipschitz on $W^s_1$. Thus, there exists a unique solution $\Gamma^{\sigma,1} \in C^1([0,T^1_*);W^s_1)$ for some $T^1_* > 0$ such that if $T^1_* < \infty$ then 
		\begin{equation*}
			\lim_{t \to T^1_*} \|\Gamma^{\sigma,1}(t)\|^2_{W^s_1} = \infty.
		\end{equation*}
		In addition, since $j^{\sigma,1} = \sigma c E^{\sigma,1}$ then \eqref{NSM-GO_n} with $n = 1$ gives us
		\begin{equation} \label{H-1_1}
			\|(E^{\sigma,1},B^{\sigma,1})(t)\|^2_{\dot{H}^{-1}} + \frac{1}{\sigma} \int^t_0 \|j^{\sigma,1}\|^2_{\dot{H}^{-1}} \,d\tau = \|(E^{\sigma,1},B^{\sigma,1})(0)\|^2_{\dot{H}^{-1}},
		\end{equation}
		which together with \eqref{Gamma_1} implies that $T^1_* = \infty$.
		
		\textbf{Step 1b: The case $n = 2$.} By using the previous step, there exists a unique solution $\Gamma^{\sigma,2} \in C^1([0,T^2_*);W^s_2)$ for some $T^2_* > 0$. Moreover, \eqref{NSM-GO_n} with $n = 2$ leads to for $t \in (0,T^2_*)$ and $C = C(\kappa,\sigma)$
		\begin{equation*}
			\|(E^{\sigma,2},B^{\sigma,2})(t)\|^2_{\dot{H}^{-1}} + \frac{1}{\sigma} \int^t_0 \|j^{\sigma,2}\|^2_{\dot{H}^{-1}} \,d\tau  
			\leq C \int^t_0 \|v^{\sigma,2} \times B^{\sigma,1} \|^2_{\dot{H}^{-1}} + \|j^{\sigma,1} \times B^{\sigma,2} \|^2_{\dot{H}^{-1}} \,d\tau.
		\end{equation*}
		By using the embedding $L^{p_0}(\mathbb{R}^3) \hookrightarrow \dot{H}^{s_0}(\mathbb{R}^3)$ (see \cite{Bahouri-Chemin-Danchin_2011}) for $p_0 \in (1,2]$ and $s_0 = \frac{3}{2} - \frac{3}{p_0}$ 
		with $(s_0,p_0) = (-1,\frac{6}{5})$, 
		\begin{align*}
			\|v^{\sigma,2} \times B^{\sigma,1} \|^2_{\dot{H}^{-1}} &\leq 
			C \|v^{\sigma,2} \times B^{\sigma,1} \|^2_{L^\frac{6}{5}} 
			\leq C\|v^{\sigma,2}\|_{L^2} \|v^{\sigma,2}\|_{\dot{H}^1} \|B^{\sigma,1}\|^2_{L^2},
			\\
			\|j^{\sigma,1} \times B^{\sigma,2} \|^2_{\dot{H}^{-1}} &\leq 
			C \|j^{\sigma,1} \times B^{\sigma,2} \|^2_{L^\frac{6}{5}} 
			\leq C\|j^{\sigma,1}\|_{L^2} \|j^{\sigma,1}\|_{\dot{H}^1} \|B^{\sigma,2}\|^2_{L^2},
		\end{align*}
		which by using \eqref{Gamma_1} and \eqref{Main_2_1} yields $T^2_* = \infty$ since for $t \in (0,T^2_*)$
		\begin{equation} \label{H-1_2}
			\|(E^{\sigma,2},B^{\sigma,2})(t)\|^2_{\dot{H}^{-1}} + \frac{1}{\sigma} \int^t_0 \|j^{\sigma,2}\|^2_{\dot{H}^{-1}} \,d\tau  
			\leq C(t,d,\kappa,\sigma,s,\Gamma^\sigma_0).
		\end{equation}
		
		\textbf{Step 1c: The case $n = k > 2$.} By using a similar induction argument as done in the proof of Theorem \ref{theo_global_small}-Step 1c with assuming that \eqref{E_km_induction} and \eqref{H-1_2} holds for $(\Gamma^{\sigma,k},j^{\sigma,k})$, we can first obtain 
		\eqref{E_k+1_induction} and \eqref{H-1_2} for $(\Gamma^{\sigma,k+1},j^{\sigma,k+1})$ instead, and then $T^{k+1}_* = \infty$. Therefore, it follows that for $t > 0$ and $n \in \mathbb{N}$
		\begin{align} \nonumber
			&\|\Gamma^{\sigma,n}(t)\|^2_{X^s} + \int^t_0 \|\nabla v^{\sigma,n}\|^2_{H^{s-1}} + \|v^{\sigma,n}\|^2_{L^\infty} + \|E^{\sigma,n}\|^2_{\dot{H}^{s''}} \,d\tau
			\\ \label{E_sigma_global}
			&\quad + \int^t_0 \|B^{\sigma,n}\|^2_{\dot{H}^{s'}} + \|j^{\sigma,n}\|^2_{H^s} \,d\tau \leq C(d,\kappa,\nu,\sigma,s) \|\Gamma^\sigma_0\|^2_{X^s},
			\\
			\label{H-1_n}
			&\|(E^{\sigma,n},B^{\sigma,n})(t)\|^2_{\dot{H}^{-1}} + \int^t_0 \|j^{\sigma,n}\|^2_{\dot{H}^{-1}} \,d\tau  
			\leq C(t,d,\kappa,\sigma,s,\Gamma^\sigma_0).
		\end{align}
		
		\textbf{Step 2: Passing to the limit and uniqueness.} This step can be done as in the proof of Theorem \ref{theo_global_small}-Step 2, where we will obtain \eqref{NSM-GO-limit}, the energy estimate and \eqref{E_sigma_global}-\eqref{H-1_n} for $(\Gamma^\sigma,j^\sigma)$, i.e., for $t > 0$, $s'' \in [0,s]$ and $s' \in [1,s]$, \eqref{Gamma_sigma_1}-\eqref{Gamma_sigma_3} hold.
	\end{proof}
	
	We now continue with the proof of the second part as follows.
	
	\begin{proof}[Proof of Theorem \ref{theo-MH}-$(ii)$] We now suppose that Part $(i)$ holds without assuming Condition \eqref{small_data} on the initial data, i.e., there exists a unique global solution $\Gamma^\sigma$ for possibly large data, which satisfies \eqref{NSM-GO-limit} for $(\Gamma^\sigma,j^\sigma)$ and \eqref{Gamma_sigma_1}-\eqref{Gamma_sigma_3}. Furthermore, by defining $A^{\sigma}$ such that $\nabla \times A^{\sigma} = B^{\sigma}$ and $\textnormal{div}\, A^{\sigma} = 0$ ($A^{\sigma}$ are not unique), using and \cite[Proposition 1.36]{Bahouri-Chemin-Danchin_2011}, it follows from \eqref{Gamma_sigma_2}-\eqref{Gamma_sigma_3} that for $t > 0$  
		\begin{align} \label{AB_csigma}
			\mathcal{H}^{\sigma}(t) &:= \int_{\mathbb{R}^3} A^{\sigma}(t) \cdot B^{\sigma}(t)  \,dx 
			\leq C \|B^{\sigma}(t)\|^2_{\dot{H}^{-\frac{1}{2}}}
			\leq  C(t,\kappa,\nu,\sigma,s,\Gamma^{\sigma}_0).
		\end{align}
		In addition,  \eqref{Gamma_sigma_1}-\eqref{AB_csigma}, Tonelli and Fubini (twice and we need the estimate of $\|B^\sigma\|_{\dot{H}^{-1}}$ here\footnote{In fact, $\mathcal{H}^\sigma(t)$ is well-defined if $\|B^\sigma(t)\|_{\dot{H}^{-\frac{1}{2}}}$ is finite for $t \in (0,\infty)$, which is possible if we consider the $\dot{H}^{-\frac{1}{2}}$ estimate instead of the $\dot{H}^{-1}$ one in \eqref{H-1_n} for $(E^\sigma_0,B^\sigma_0) \in \dot{H}^{-\frac{1}{2}}$.}) Theorems and the limiting system for $\Gamma^{\sigma}$ yield (see \cite{Brezis_2011}, to compute the weak derivative in the first line below using \eqref{NSM-GO-limit} for $(\Gamma^\sigma,j^\sigma)$),
		\begin{align}  \label{AB_csigma_1}
			\frac{d}{dt} \mathcal{H}^{\sigma}(t) 
			&= \int_{\mathbb{R}^3} \partial_t A^{\sigma} \cdot B^{\sigma} + A^{\sigma} \cdot \partial_t B^{\sigma} \,dx 
			= -2 \int_{\mathbb{R}^3} \frac{1}{\sigma} j^{\sigma} \cdot B^{\sigma}\,dx.
		\end{align}
		Integrating in time, we find from \eqref{Gamma_sigma_2} and \eqref{AB_csigma}-\eqref{AB_csigma_1} that for $\tau > 0$ (see \cite{Brezis_2011})
		\begin{align*}
			\left|\mathcal{H}^{\sigma}(\tau) - \mathcal{H}^{\sigma}(0)\right| 
			&= \left|\int^{\tau}_0 \frac{d}{dt} \mathcal{H}^{\sigma}(t) \,dt\right| 
			\leq \sigma^{-\frac{1}{2}} (\tau + 1)\|\Gamma^{\sigma}_0\|^2_{L^2},
		\end{align*}
		which after taking $\sigma \to \infty$ implies that for a.e. $t > 0$
		\begin{equation*}
			\lim_{\sigma \to \infty} \int_{\mathbb{R}^3} A^{\sigma}(t) \cdot B^{\sigma}(t)  \,dx 
			= \int_{\mathbb{R}^3} A_0 \cdot B_0 \,dx =: \mathcal{H}_0,
		\end{equation*}
		by using the uniform bound in $L^2$ of $\Gamma^\sigma_0$ and $B^{\sigma}_0 \to B_0$ in $\dot{H}^{-1}$ as $\sigma \to \infty$ with 
		\begin{align*}
			\left|\int_{\mathbb{R}^3} A^{\sigma}_0 \cdot B^{\sigma}_0  - A_0 \cdot B_0 \,dx \right| &= \left|\int_{\mathbb{R}^3} (A^{\sigma}_0 + A_0) \cdot (B^{\sigma}_0 - B_0) \,dx\right|
			\quad 
			\to 0 \quad \text{as} \quad \sigma \to \infty.
		\end{align*}
		Finally, it follows from the above limit and \eqref{AB_csigma} that if the initial magnetic helicity is positive then there exists an absolute positive constant $C$ such that
		\begin{equation*}
			\liminf_{t\to \infty} \liminf_{\sigma \to \infty} \|B^{\sigma}(t)\|^2_{\dot{H}^{-\frac{1}{2}}} \geq  C\liminf_{t\to \infty} \lim_{\sigma \to \infty} \int_{\mathbb{R}^3} A^{\sigma}(t) \cdot B^{\sigma}(t)  \,dx 
			= C\mathcal{H}_0 > 0.
		\end{equation*}
		Thus, the proof is complete.
	\end{proof}
	
	%
	\section{Proof of Theorem \ref{theo-C-HMHD}} \label{sec:theo-C-HMHD}
	%
	
	In this section, we provide a proof of Theorem \ref{theo-C-HMHD}.
	
	\begin{proof}[Proof of Theorem \ref{theo-C-HMHD}] The proof consists of several steps as follows.
		
		\textbf{Step 1: Approximate system and local existence.} For $n \in \mathbb{N}$, we will use the following approximate system of \eqref{HMHD} with $(\alpha,\beta) = (\alpha_c,\beta_c)$
		\begin{equation} \label{app-C-HMHD}
			\frac{d}{dt} \Gamma^{\sigma,n} = F^{\sigma,n}(\Gamma^{\sigma,n}), \quad \textnormal{div}\, v^{\sigma,n}  = \textnormal{div}\, B^{\sigma,n}  = 0 \quad \text{and} \quad \Gamma^{\sigma,n}_{|_{t=0}} = \mathcal{T}_{\overline{n}}(\Gamma^\sigma_0),
		\end{equation}
		where $\Gamma^{\sigma,n}:= (v^{\sigma,n},B^{\sigma,n}), \Gamma^\sigma_0 := (v^\sigma_0,B^\sigma_0)$ and $F^{\sigma,n} := (F^{\sigma,n}_2,F^{\sigma,n}_2)$ with $j^{\sigma,n} := \nabla \times B^{\sigma,n}$,
		\begin{align*}
			F^{\sigma,n}_1 &= -\mathbb{P}\mathcal{T}_{\overline{n}} (v^{\sigma,n} \cdot \nabla v^{\sigma,n}) - \nu (-\Delta)^{\alpha_c} v^{\sigma,n} + \mathbb{P}\mathcal{T}_{\overline{n}}(j^{\sigma,n} \times B^{\sigma,n}),
			\\
			F^{\sigma,n}_2 &= \mathcal{T}_{\overline{n}}(\nabla \times (v^{\sigma,n} \times B^{\sigma,n})) - \frac{1}{\sigma} (-\Delta)^{\beta_c} B^{\sigma,n} - \frac{\kappa}{\sigma} \mathcal{T}_{\overline{n}} (\nabla \times (j^{\sigma,n} \times B^{\sigma,n})).
		\end{align*}
		We then define the following functional space and mapping for $s > \frac{7}{4}$
		\begin{equation*}
			F^{\sigma,n} : Z^s_n := H^s_{n,0} \times (H^s_{n,0} \cap \dot{H}^{-1}) \to Z^s_n
			\qquad \text{with} \qquad \Gamma^{\sigma,n} \mapsto F^{\sigma,n}(\Gamma^{\sigma,n}).
		\end{equation*}
		In $Z^s_n$, we use the following norm 
		\begin{equation*}
			\|(f_1,f_2)\|^2_{Z^s_n} := \|(f_1,f_2)\|^2_{H^s} + \|f_2\|^2_{\dot{H}^{-1}}.
		\end{equation*}
		Therefore, it can be check that $F^{\sigma,n}$ is well-defined and locally Lipschitz on $Z^s_n$. Thus, there exists a unique solution $\Gamma^{\sigma,n} \in C^1([0,T^n_*);Z^s_n)$ for some $T^n_* > 0$ with the following property if $T^n_* < \infty$ then
		\begin{equation*}
			\lim_{t \to T^n_*} \left(\|(v^{\sigma,n},B^{\sigma,n})(t)\|^2_{H^s} + \|B^{\sigma,n}(t)\|^2_{\dot{H}^{-1}}\right) = \infty.
		\end{equation*}
		
		\textbf{Step 2: The $H^s$ and $\dot{H}^{-1}$ estimates.} Assume that $T^n_* < \infty$. It is standard that the energy estimate of \eqref{app-C-HMHD} is given by
		\begin{equation} \label{E_sigma-n}
			\frac{1}{2} \frac{d}{dt}\|\Gamma^{\sigma,n}\|^2_{L^2} + \nu \|v^{\sigma,n}\|^2_{\dot{H}^{\alpha_c}} + \frac{1}{\sigma} \|B^{\sigma,n}\|^2_{\dot{H}^{\beta_c}} = 0.
		\end{equation}
		We now focus on the $\dot{H}^{-1}$ estimate as follows
		\begin{equation} \label{H-1_HMHD_1}
			\frac{1}{2} \frac{d}{dt} \|B^{\sigma,n}\|^2_{\dot{H}^{-1}} + \frac{1}{\sigma} \|B^{\sigma,n}\|^2_{\dot{H}^{\beta_c - 1}} =: I^{\sigma,n}_{1,-1} + I^{\sigma,n}_{2,-1}.
		\end{equation}
			\\
		Similar to Step 1b in the proof of Thereom \ref{theo-MH}, \eqref{E_sigma-n}-\eqref{H-1_HMHD_1} leads to for $t \in (0,T^n_*)$
		\begin{equation} \label{H-1-HMHD_2}
			\|B^{\sigma,n}(t)\|^2_{\dot{H}^{-1}} +  \int^t_0 \|B^{\sigma,n}\|^2_{\dot{H}^{\beta_c - 1}} \,d\tau  \leq C(T^n_*,\kappa,\nu,\sigma) \|\Gamma^\sigma_0\|^2_{L^2}.
		\end{equation}
		We now focus on the $H^s$ estimate as follows. It can be seen from \eqref{app-C-HMHD} that 
		\begin{equation} \label{HMHD-Hs_1}
			\frac{1}{2} \frac{d}{dt} \|(v^{\sigma,n},B^{\sigma,n})\|^2_{\dot{H}^s} + \nu \|v^{\sigma,n}\|^2_{\dot{H}^{s+\alpha_c}} + \frac{1}{\sigma} \|B^{\sigma,n}\|^2_{\dot{H}^{s+\beta_c}}
			=: \sum^{4}_{i=1} I^{\sigma,n}_i.
		\end{equation}
		Thus, from direct calculations and the following Agmon-type inequality in three dimensions 
		\begin{equation*}
			\|f\|_{L^\infty} \leq C(s) \|f\|^{1-\frac{3}{2s_0}}_{L^2} \|f\|^\frac{3}{2s_0}_{\dot{H}^{s_0}} \qquad \text{for} \quad  s_0 > \frac{3}{2},
		\end{equation*}
		it follows from \eqref{HMHD-Hs_1} 
		that for $t \in (0,T^n_*)$
		\begin{equation} \label{HMHD-Hs_2}
			\|(v^{\sigma,n},B^{\sigma,n})\|^2_{H^s} + \int^t_0 \nu \|v^{\sigma,n}\|^2_{H^{s+\alpha_c}} + \frac{1}{\sigma} \|B^{\sigma,n}\|^2_{H^{s+\beta_c}} \,d\tau \leq C(T^n_*,\kappa,\nu,\sigma,s,\Gamma^\sigma_0).
		\end{equation}
		Thus, we find from \eqref{H-1-HMHD_2} and \eqref{HMHD-Hs_2} that $T^n_* = \infty$. In addition, by repeating the above computations, we also obtain \eqref{H-1-HMHD_2} and \eqref{HMHD-Hs_2} with replacing $T^n_*$ by any $T \in (0,\infty)$.
		
		\textbf{Step 3: Passing to the limit and uniqueness.} This part is standard, where by using \eqref{HMHD-Hs_2} (for any $T \in (0,\infty)$ instead of $T^n_*$), we can prove that $(v^{\sigma,n},B^{\sigma,n})$ is a Cauchy sequence in $L^\infty(0,T;H^r)$ and in $L^2(0,T;H^{r+\alpha_c} \times H^{r+\beta_c})$ for $r \in (\frac{7}{4},s)$, for example see \cite{Fefferman-McCormick-Robinson-Rodrigo_2014,KLN_2024}, in which the limiting solution satisfies the following limiting system for $t > 0$
		\begin{equation*}
			\left\{
			\begin{aligned}
				\partial_t v^\sigma + \mathbb{P}(v^{\sigma} \cdot \nabla v^{\sigma}) &= - \nu (-\Delta)^{\alpha_c} v^{\sigma} + \mathbb{P}(j^{\sigma} \times B^{\sigma}), &\text{in} \quad L^2(0,t;H^{r-\alpha_c}),
				\\
				\partial_t B^\sigma - (\nabla \times (v^{\sigma} \times B^{\sigma})) &= - \frac{1}{\sigma} (-\Delta)^{\beta_c} B^{\sigma} - \frac{\kappa}{\sigma} (\nabla \times (j^{\sigma} \times B^{\sigma})), &\text{in} \quad L^2(0,t;H^{r-\beta_c}),
				\\
				\textnormal{div}\, v^\sigma &= \textnormal{div}\, B^\sigma = 0, &\text{in} \quad L^2(0,t;H^{r-1}),
				\\
				(v^\sigma,B^\sigma)_{|_{t=0}} &= (v^\sigma_0,B^\sigma_0), &\text{in} \quad H^s,
			\end{aligned}
			\right.
		\end{equation*}
		and 
		\begin{align} \label{Gamma_sigma_HMHD_1}
			&\|\Gamma^{\sigma}(t)\|^2_{L^2} + \int^t_0 \nu \|v^{\sigma}\|^2_{\dot{H}^{\alpha_c}} + \frac{1}{\sigma} \|B^{\sigma}\|^2_{\dot{H}^{\beta_c}} \,d\tau \leq \|\Gamma^\sigma_0\|^2_{L^2},
			\\ \label{Gamma_sigma_HMHD_2}
			&\|B^{\sigma}(t)\|^2_{\dot{H}^{-1}} +  \int^t_0 \|B^{\sigma}\|^2_{\dot{H}^{\beta_c - 1}} \,d\tau  \leq C(T,\kappa,\nu,\sigma) \|\Gamma^\sigma_0\|^2_{L^2},
			\\ \label{Gamma_sigma_HMHD_3}
			&\|(v^{\sigma},B^{\sigma})(t)\|^2_{\dot{H}^s} + \int^t_0  \|v^{\sigma}\|^2_{\dot{H}^{s+\alpha_c}} + \|B^{\sigma}\|^2_{\dot{H}^{s+\beta_c}} \,d\tau \leq C(T,\kappa,\nu,\sigma,s,\Gamma^\sigma_0).
		\end{align}
		Furthermore, the uniqueness can be obtained in the usual way. We omit further details.
		
		\textbf{Step 4: Magnetic helicity conservation.} Similar to the proof of Theorem \ref{theo-MH}, we find from \eqref{H-1-HMHD_2} (without $n$) that for $t > 0$
		\begin{align} \label{AB_csigma_HMHD}
			\mathcal{H}^{\sigma}(t) &:= \int_{\mathbb{R}^3} A^{\sigma}(t) \cdot B^{\sigma}(t)  \,dx 
			\leq C \|B^{\sigma}(t)\|^2_{\dot{H}^{-\frac{1}{2}}}
			\leq  C(t,\kappa,\nu,\sigma,s,\Gamma^{\sigma}_0).
		\end{align}
		In addition,  \eqref{Gamma_sigma_HMHD_1}-\eqref{AB_csigma_HMHD}, Tonelli and Fubini Theorems (see \cite{Brezis_2011}, to compute the weak derivative in the first line below) and the limiting system yield for $t > 0$
		\begin{align} \label{AB_csigma_1_HMHD} 
			\frac{d}{dt} \mathcal{H}^{\sigma}(t) 
			&= \int_{\mathbb{R}^3} \partial_t A^{\sigma} \cdot B^{\sigma} + A^{\sigma} \cdot \partial_t B^{\sigma} \,dx 
			= -\frac{2}{\sigma} \int_{\mathbb{R}^3}  A^\sigma \cdot  (-\Delta)^{\beta_c} B^\sigma \,dx.
		\end{align}
		Integrating in time, we find from \eqref{Gamma_sigma_HMHD_1} and \eqref{AB_csigma_HMHD}-\eqref{AB_csigma_1_HMHD} that for $\tau > 0$ (see \cite{Brezis_2011})
		\begin{align*}
			\left|\mathcal{H}^{\sigma}(\tau) - \mathcal{H}^{\sigma}(0)\right| &= \left|\int^{\tau}_0 \frac{d}{dt} \mathcal{H}^{\sigma}(t) \,dt\right| 
			\leq 2 (\tau+1) \sigma^{-\frac{2}{7}} \|\Gamma^\sigma_0\|^2_{L^2}.
		\end{align*}
		Thus, as in the proof Theorem \ref{theo-MH}, the proof is complete by using \eqref{AB_csigma_HMHD}.
	\end{proof}

	%
	\section{Proof of Theorem \ref{theo_stability}} \label{sec:stability}
	%
	
	In this section, we focus on giving the proof of Theorem \ref{theo_stability}.
	
	\begin{proof}[Proof of Theorem \ref{theo_stability}-$(i)$] The proof will be divided into several steps as follows.
		
		\textbf{Step 1: Approximate system, local and global existence.} Since $B^*$ is a constant vector in $\mathbb{R}^3$ then we can choose $cE^* - \nabla \pi^*_2  = \nabla \pi^*_1 = 0$. We will focus on the following approximate system to \eqref{NSM-GO*} in the case $\alpha = 0$ for each $n \in \mathbb{N}$
		\begin{equation} \label{NSM-GO*_n} 
			\frac{d}{dt} \Gamma^n = F^n(\Gamma^n) \qquad \text{and} \qquad \Gamma^n_{|_{t=0}} = \mathcal{T}_{\overline{n}}(\Gamma_0),
		\end{equation}
		where $\Gamma^n := (v^n,E^n,B^n)$ and $F^n := (F^n_1,F^n_2,F^n_3)$ with
		\begin{align*}
			\bar{j}^n &:= - \kappa\mathbb{P}\mathcal{T}_{\overline{n}}(\bar{j}^{n-1} \times (B^n + B^*)) + \sigma(cE^n + \mathbb{P}\mathcal{T}_{\overline{n}}(v^{n} \times (B^{n-1} + B^*))),
			\\
			F^n_1 &:= - \mathbb{P}\mathcal{T}_{\overline{n}}(v^{n-1} \cdot \nabla v^n) - \nu v^n + \mathbb{P}\mathcal{T}_{\overline{n}}(\bar{j}^n \times (B^{n-1} + B^*)),
			\\
			F^n_2 &:= c(\nabla \times B^n - \bar{j^n})
			\qquad \text{and} \qquad 
			F^n_3 := -c\nabla \times E^n.
		\end{align*}
		Similar to the previous parts, we aim to prove  the following claim by using an induction argument: For each $n \in \mathbb{N}$, \eqref{NSM-GO*_n} has a unique global solution $\Gamma^n \in C^1([0,\infty);Y^s_n)$, where $Y^s_n$ will be defined below, such that for $t \in (0,\infty)$, $s'' \in [0,s]$, $s' \in [1,s]$ and $C = C(B^*,d,\kappa,\nu,\sigma,s)$,  
		\begin{align} \label{D-Gamma-n_1}
			\|\Gamma^n(t)\|^2_{H^s} + \int^t_0  \|v^n\|^2_{H^s} +  \|\bar{j}^n\|^2_{H^s} \,d\tau \leq C\|\Gamma_0\|^2_{H^s},
			\\ \label{D-Gamma-n_1_b}
			\int^t_0  \|E^n\|^2_{\dot{H}^{s''}} + \|B^n\|^2_{\dot{H}^{s'}}  \,d\tau \leq C(c)C\|\Gamma_0\|^2_{H^s}. 
		\end{align}
		Thus, as in the proof of Theorem \ref{theo_global_small}, we define for $s > \frac{d}{2} + 1$ the following functional space and mapping
		\begin{equation*}
			Y^s_n := H^s_{n,0} \times H^s_{n,0} \times H^s_{n,0}, 
			\qquad F^n : Y^s_n \to Y^s_n \quad \text{with}\quad  \Gamma^n \mapsto F^n(\Gamma^n). 
		\end{equation*}
		
		\textbf{Step 1a: The case $n = 1$.} In this case,
		as in the previous cases, we have the existence and uniqueness of solutions $\Gamma^1 \in C^1([0,T^1_*);Y^s_1)$ for some $T^1_* > 0$ with  the following property that if $T^1_* < \infty$ then
		\begin{equation*}
			\lim_{t \to T^1_*} \|\Gamma^1(t)\|^2_{H^s} = \infty.
 		\end{equation*}
		In addition, the $H^s$ estimate of \eqref{NSM-GO*_n} with $n = 1$ is given by for $t \in (0,T^1_*)$
		\begin{equation} \label{D-Gamma-1}
			\|\Gamma^1(t)\|^2_{H^s} + \int^t_0 \nu \|v^1\|^2_{H^s} + \frac{1}{\sigma} \|\bar{j}^1\|^2_{H^s} \,d\tau \leq \|\Gamma_0\|^2_{H^s},
		\end{equation}
		which implies that $T^1_* = \infty$. Similar to Step 1a in the proof Theorem \ref{theo_global_small}, we find that for $s' \in [1,s]$ and $\tau > 0$
		\begin{align*}
			\|B^1\|^2_{L^2(0,t;\dot{H}^{s'})}  
			&= \frac{1}{c} \int^{\tau}_0 \frac{d}{dt} \int_{\mathbb{R}^d} \Lambda^{s'-1} E^1 \cdot \Lambda^{s'-1}(\nabla \times B^1) \,dxdt +  \|E^1\|^2_{L^2(0,t;\dot{H}^{s'})} 
			\\
			&\quad + \int^{\tau}_0 \int_{\mathbb{R}^d} \Lambda^{s'-1} \bar{j}^1 \cdot \Lambda^{s'-1}(\nabla \times B^1) \,dxdt.
		\end{align*}
		In addition, for $s'' \in [0,s]$ and $t \in (0,\infty)$
		\begin{align*}
			2c^2\sigma \|E^1\|^2_{L^2(0,t;\dot{H}^{s''})} &= - 2c \int^t_0\int_{\mathbb{R}^d} \Lambda^{s''} (v^1 \times B^*) \cdot \Lambda^{s''} E^1\,dxd\tau 
			+ \|(E^1,B^1)(0)\|^2_{\dot{H}^{s''}}.
		\end{align*}
		By using \eqref{D-Gamma-1} and the two previous equalities, it yields for $t \in (0,\infty)$ and $C = C(B^*,d,\kappa,\nu,\sigma,s)$
		\begin{align} \label{D-Gamma-1_2}
			\|\Gamma^1(t)\|^2_{H^s} + \int^t_0 \|v^1\|^2_{H^s} + 
			\|\bar{j}^1\|^2_{H^s} \,d\tau \leq C\|\Gamma_0\|^2_{H^s},
			\\ \label{D-Gamma-1_2_b}
			\int^t_0 \|E^1\|^2_{\dot{H}^{s''}} + \|B^1\|^2_{\dot{H}^{s'}}  \,d\tau \leq C(c)C\|\Gamma_0\|^2_{H^s},
		\end{align}
		which is \eqref{D-Gamma-n_1}-\eqref{D-Gamma-n_1_b} with $n = 1$.
		
		\textbf{Step 1b: The case $n = 2$.} In this case,
		we have the existence and uniqueness of solutions $\Gamma^2 \in C^1([0,T^2_*);Y^s_2)$ for some $T^2_* > 0$. Moreover, the $H^s$ estimate for \eqref{NSM-GO*_n} with $n = 2$ is given by
		\begin{equation*}
			\frac{1}{2}\frac{d}{dt}\|\Gamma^2\|^2_{H^s} + \nu \|v^2\|^2_{H^s} + \frac{1}{\sigma}\|\bar{j}^2\|^2_{H^s} =: \sum^{5}_{i=1} I^2_i,
		\end{equation*}
		where for some $\epsilon \in (0,1)$, since $s > \frac{d}{2} + 1$
		\begin{align*}
			I^2_1 &= \int_{\mathbb{R}^d} [\Lambda^s(v^1 \cdot \nabla v^2) - v^1 \cdot \nabla \Lambda^s v^2]  \cdot \Lambda^s v^2\,dx
			\leq \epsilon \nu \|v^2\|^2_{H^s} + C(d,\epsilon,s) \|v^1\|^2_{H^s} \|v^2\|^2_{H^s};
			\\
			I^2_2 &= \int_{\mathbb{R}^d} \Lambda^s(\bar{j}^2 \times (B^1 + B^*)) \cdot \Lambda^s v^2\,dx =: I^2_{2,1} + I^2_{2,2},
			\\
			I^2_{2,1} &= \int_{\mathbb{R}^d} \Lambda^s(\bar{j}^2 \times B^1) \cdot \Lambda^s v^2 \,dx
			\leq \epsilon \nu \|v^2\|^2_{H^s} + C(d,\epsilon,s)\|\bar{j}^2\|^2_{H^s} \|B^1\|^2_{H^s};
			\\
			I^2_{2,2} &= \int_{\mathbb{R}^d} (\Lambda^s \bar{j}^2 \times B^*) \cdot \Lambda^s v^2 \,dx;
			\\
			I^2_3 &= -\frac{\kappa}{\sigma}\int_{\mathbb{R}^d} \Lambda^s \bar{j}^2 \cdot \Lambda^s(\bar{j}^1 \times (B^2 + B^*))\,dx =: I^2_{3,1} + I^2_{3,2},
			\\
			I^2_{3,1} &= -\frac{\kappa}{\sigma}\int_{\mathbb{R}^d} \Lambda^s \bar{j}^2 \cdot \Lambda^s(\bar{j}^1 \times B^2)\,dx \leq \frac{\epsilon}{\sigma} \|\bar{j}^2\|^2_{H^s} + C(d,\epsilon,\sigma,s) \|\bar{j}^1\|^2_{H^s} \|B^2\|^2_{H^s};
			\\
			I^2_{3,2} &= -\frac{\kappa}{\sigma}\int_{\mathbb{R}^d} \Lambda^s \bar{j}^2 \cdot (\Lambda^s \bar{j}^1 \times B^*)\,dx \leq \frac{\epsilon}{\sigma}\|\bar{j}^2\|^2_{H^s} + C(d,\epsilon,\sigma,s)\|\bar{j}^1\|^2_{H^s}\|B^*\|^2_{L^\infty};
			\\
			I^2_4 &= -\frac{\kappa}{\sigma} \int_{\mathbb{R}^d} \bar{j}^2 \cdot (\bar{j}^1 \times (B^2 + B^*))\,dx := I^2_{4,1} + I^2_{4,2},
			\\
			I^2_{4,1} &= -\frac{\kappa}{\sigma} \int_{\mathbb{R}^d} \bar{j}^2 \cdot (\bar{j}^1 \times B^2)\,dx \leq \frac{\epsilon}{\sigma} \|\bar{j}^2\|^2_{H^s} + C(d,\epsilon,\sigma,s)\|\bar{j}^1\|^2_{H^s} \|B^2\|^2_{H^s};
			\\
			I^2_{4,2} &= -\frac{\kappa}{\sigma} \int_{\mathbb{R}^d} \bar{j}^2 \cdot (\bar{j}^1 \times B^*)\,dx \leq \frac{\epsilon}{\sigma} \|\bar{j}^2\|^2_{L^2} + C(d,\epsilon,\sigma,s)\|\bar{j}^1\|^2_{L^2}\|B^*\|^2_{L^\infty};
			\\
			I^2_5 &= \int_{\mathbb{R}^d} \Lambda^s \bar{j}^2 \cdot \Lambda^s (v^2 \times (B^1 + B^*))\,dx =: I^2_{5,1} + I^2_{5,2},
			\\
			I^2_{5,1} &= \int_{\mathbb{R}^d} \Lambda^s \bar{j}^2 \cdot \Lambda^s (v^2 \times B^1) \,dx \leq \frac{\epsilon}{\sigma} \|\bar{j}\|^2_{H^s} + C(d,\epsilon,\sigma,s)\|v^2\|^2_{H^s}\|B^1\|^2_{H^s};
			\\
			I^2_{5,2} &= \int_{\mathbb{R}^d} \Lambda^s \bar{j}^2 \cdot (\Lambda^s v^2 \times B^*) \,dx = - I^2_{2,2},
		\end{align*}
		here we used the following homogeneous Kato-Ponce type inequality (see \cite{Grafakos-Oh_2014})
		for $1 < p_i,q_i \leq \infty$, $i \in \{1,2\}$, $s_0 \in \mathbb{R}, s_0 > 0$ and  $\frac{1}{p_i} + \frac{1}{q_i} = \frac{1}{2}$
		\begin{equation*} 
			\|\Lambda^{s_0}(fg)\|_{L^2(\mathbb{R}^d)} \leq C(d,s_0,p_i,q_i)\left(\|\Lambda^{s_0} f\|_{L^{p_1}(\mathbb{R}^d)}\|g\|_{L^{q_1}(\mathbb{R}^d)} + \|f\|_{L^{p_2}(\mathbb{R}^d)}\|\Lambda^{s_0} g\|_{L^{q_2}(\mathbb{R}^d)} \right).
		\end{equation*}
		Therefore, by choosing $\epsilon = \frac{1}{10}$, it follows that for $C = C(d,\kappa,\nu,\sigma,s)$
		\begin{align*}
			\frac{d}{dt}\|\Gamma^2\|^2_{H^s} + \nu \|v^2\|^2_{H^s} + \frac{1}{\sigma} \|\bar{j}^2\|^2_{H^s} 
			&\leq C\|(v^1,B^1)\|^2_{H^s} \left(\|v^2\|^2_{H^s} + \|\bar{j}^2\|^2_{H^s}\right) 
			\\
			&\quad + C\left(\|B^*\|^2_{L^\infty} + \|B^2\|^2_{H^s}\right)\|\bar{j}^1\|^2_{H^s},
 		\end{align*}
 		which by integrating in time and using \eqref{D-Gamma-1_2} under assuming $C\|\Gamma_0\|^2_{H^s} \leq 1$ for some big enough positive constant $C = C(d,\kappa,\nu,\sigma,s)$, leads to for $t \in (0,T^2_*)$ and $C = (B^*,d,\kappa,\nu,\sigma,s)$ 
 		\begin{equation} \label{D-Gamma-2}
 			\esssup_{\tau \in (0,t)} \|\Gamma^2(\tau)\|^2_{H^s} + \int^t_0 \nu \|v^2\|^2_{H^s} + \frac{1}{\sigma}\|\bar{j}^2\|^2_{H^s} \,d\tau 
 			\leq C\|\Gamma_0\|^2_{H^s},
 		\end{equation}
 		which yields $T^2_* = \infty$. In addition, as in the previous step, by using \eqref{D-Gamma-2}, we find that for $t \in (0,\infty)$, $C = C(B^*,d,\kappa,\nu,\sigma,s)$, $s'' \in [0,s]$ and $s' \in [1,s]$
 		\begin{equation*}
 			\|E^2\|^2_{L^2(0,t;\dot{H}^{s''})} + \|B^2\|^2_{L^2(0,t;\dot{H}^{s'})} \leq  C(c)C\|\Gamma_0\|^2_{H^s},
 		\end{equation*}
 		which further implies \eqref{D-Gamma-n_1} and \eqref{D-Gamma-n_1_b} with $n = 2$.

 		\textbf{Step 1c: The case $n > 2$ with an induction argument.} In this case, 
 		we have the existence an uniqueness of solutions $\Gamma^k \in C^1([0,T^k_*);Y^s_k)$ for some $T^k_* > 0$. In Step 1b, we proved that \eqref{D-Gamma-n_1} holding for $n = 2$. Now, we assume that \eqref{D-Gamma-n_1} also holds for $n = k > 2$, i.e., \eqref{NSM-GO*_n} with $n = k$ has a global solution $\Gamma^k$ such that for $t \in (0,\infty)$, $s'' \in [0,s]$, $s' \in [1,s]$ and  $C = C(B^*,d,\kappa,\nu,\sigma,s)$
 		\begin{align} \label{D-Gamma-k}
 			\|\Gamma^k(t)\|^2_{H^s} + \int^t_0  \|v^k\|^2_{H^s} + \|\bar{j}^k\|^2_{H^s} \,d\tau \leq C\|\Gamma_0\|^2_{H^s},
 			\\ \label{D-Gamma-k_b}
 			\int^t_0  \|E^k\|^2_{\dot{H}^{s''}} + \|B^k\|^2_{\dot{H}^{s'}}  \,d\tau \leq C(c)C\|\Gamma_0\|^2_{H^s}. 
 		\end{align}
 		Thus, it remains to prove a similar result at the next level, i.e., \eqref{D-Gamma-n_1} with $n = k+1$.
 		Similarly, we obtain the existence and uniqueness of solutions $\Gamma^{k+1} \in C^1([0,T^{k+1}_*);Y^s_{k+1})$ for some $T^{k+1}_* > 0$. By repeating Step 1b with replacing \eqref{NSM-GO*_n} with $n = 2$, $(\Gamma^2,\bar{j}^2,\Gamma^1,\bar{j}^1)$ by \eqref{NSM-GO*_n} with $n = k+1$ and $(\Gamma^{k+1},\bar{j}^{k+1},\Gamma^k,\bar{j}^k)$, respectively, and with further assuming the right-hand side of \eqref{D-Gamma-k} to be small, we find that \eqref{D-Gamma-n_1} also holds for $n = k+1$. We omit further details. 
 		
 		\textbf{Step 2: Cauchy sequence, passing to the limit as $n \to \infty$ and uniqueness.} This step can be done almost in the same way as that of Step 2a in the proof of Theorem \ref{theo_global_small}. More precisely, we will obtain that $\Gamma^n$ and $(v^n,E^n,E^n,B^n,\bar{j}^n)$ are Cauchy sequences in $L^\infty(0,\infty;H^r)$ and $L^2(0,\infty;H^r \times \dot{H}^1 \times L^2 \times \dot{H}^1 \times H^r)$ for $r \in (\frac{d}{2},s)$, respectively. In fact, this case is easier than the previous one since we have a damping velocity term. We now focus on passing to the limit as $n \to \infty$ and the uniqueness of solutions, where the proofs are similar to those of Step 2b and Step 3 in the proof of Theorem \ref{theo_global_small}. More precisely, the limiting system will be given as follows 
 		\begin{equation} \label{D-NSM-GO-limit}
 			\left\{
 			\begin{aligned}
 				\partial_t v + \mathbb{P}(v \cdot \nabla v) &= -\nu v + \mathbb{P}(\bar{j} \times (B + B^*)), &\text{in} \quad L^2(0,\infty;H^{r-1}),
 				\\
 				\frac{1}{c} \partial_t E - \nabla \times B &= - \bar{j}, &\text{in} \quad L^2(0,\infty;H^{r-1}),
 				\\
 				\frac{1}{c} \partial_t B + \nabla \times E &= 0, &\text{in} \quad L^2(0,\infty;H^{r-1}),
 				\\
 				\bar{j} + \kappa \mathbb{P}(\bar{j} \times (B + B^*)) &= \sigma(cE + \mathbb{P}(v \times (B + B^*))) &\text{in} \quad L^2(0,\infty;H^{r}),
 				\\
 				\textnormal{div}\, v = \textnormal{div}\, E &= \textnormal{div}\, B = \textnormal{div}\, \bar{j} = 0, &\text{in} \quad L^2(0,\infty;H^{r-1}),
 				\\
 				(v,E,B)_{|_{t=0}} &= (v_0,E_0,B_0) &\text{in} \quad H^s.
 			\end{aligned}
			\right.
 		\end{equation}
 		In addition, for $\Gamma := (v,E,B)$, $t \in (0,\infty)$, $C = C(B^*,d,\kappa,\nu,\sigma,s)$, $s'' \in [0,s]$ and $s' \in [1,s]$
 		\begin{align} \label{D-Gamma}
 			\|\Gamma(t)\|^2_{H^s} + \int^t_0 \|v\|^2_{H^s} + 
 			\|\bar{j}\|^2_{H^s} \,d\tau \leq C\|\Gamma_0\|^2_{H^s},
 			\\
 			\label{D-Gamma_b}
 			\int^t_0 \|E\|^2_{\dot{H}^{s''}} + \|B\|^2_{\dot{H}^{s'}} \,d\tau \leq C(c)C\|\Gamma_0\|^2_{H^s}.
 		\end{align}
 		Moreover, the pressures can also be recovered as previously. We omit further details and end the proof of this part.
	\end{proof}
	
	We now continue with the proof of the second part as follows.
	
	\begin{proof}[Proof of Theorem \ref{theo_stability}-$(ii)$] We first focus on obtaining the estimate of $(v,E)$ in $L^2(0,\infty;L^2)$. 
		It can be seen that for $t \in (0,\infty)$
		\begin{align*}
			c^2\int^t_0 \|E\|^2_{L^2} \,d\tau 
			&\leq \frac{c^2}{2} \int^t_0 \|E\|^2_{L^2} \,d\tau +  C(B^*,d,\kappa,\sigma,s) \int^t_0 \left(1 + \|B\|^2_{H^s}\right)\left(\|\bar{j}\|^2_{L^2} + \|v\|^2_{L^2}\right) \,d\tau.
		\end{align*}
		Thus, it follows from \eqref{D-Gamma} that for $c \geq 1$
		\begin{equation} \label{vE_L2L2}
			\|(v,E)\|^2_{L^2(0,\infty;L^2)} \leq C(B^*,d,\kappa,\nu,\sigma,s) \|\Gamma_0\|^2_{H^s}.
		\end{equation}
		Furthermore, it can be seen from \eqref{D-NSM-GO-limit} that
		\begin{equation*}
			\frac{1}{2}\frac{d}{dt} \|(v,E)\|^2_{L^2} + \nu \|v\|^2_{L^2} + \frac{1}{\sigma} \|\bar{j}\|^2_{L^2} =: \sum^3_{i=1} I_i. 
		\end{equation*}
		Then, it follows from \eqref{D-Gamma} that for $0 \leq t' \leq t < \infty$
		\begin{equation} \label{vE_L2L_2_1}
			\|(v,E)(t)\|^2_{L^2} - \|(v,E)(t')\|^2_{L^2} \leq C(B^*,d,\kappa,\nu,\sigma,s)\|\Gamma_0\|^2_{H^s}(t-t').
		\end{equation}
		Thus, \eqref{vE_L2L2}-\eqref{vE_L2L_2_1} leads to $\|(v,E)(t)\|_{L^2} \to 0$ as $t \to \infty
		$ (see \cite[Lemma 2.1]{Lai-Wu-Zhong_2021}). In addition, by using interpolation inequality and \eqref{D-Gamma}, we find that for $s' \in [0,s)$ and $f \in \{v,E\}$
		\begin{equation*}
			\|f(t)\|_{H^{s'}} \leq C(d,s) \|f(t)\|^\frac{s-s'}{s}_{L^2}\|f(t)\|^\frac{s'}{s}_{H^s} \quad \to 0 \quad \text{as} \quad t \to \infty.
		\end{equation*}
		We now focus on the large-time behavior of $\bar{j}$. It can be seen that for $t \in (0,\infty)$ 
		\begin{align*}
			\|\bar{j}(t)\|^2_{L^2} 
			&\leq \frac{1}{4} \|\bar{j}(t)\|^2_{L^2} + C(c,B^*,d,\kappa,\sigma,s)(1 + \|B(t)\|^2_{H^s})\left(\|E(t)\|^2_{L^2} + \|v(t)\|^2_{L^2}\right),
		\end{align*}
		which yields $\|\bar{j}(t)\|_{L^2} \to 0$ as $t \to \infty$. Similarly, 
		\begin{align*}
			\|\bar{j}(t)\|^2_{\dot{H}^s} 
			&\leq \frac{1}{4}\|\bar{j}(t)\|^2_{\dot{H}^s} + C(\kappa)\|\bar{j}(t)\|^2_{H^s}\|B(t)\|^2_{H^s} 
			\\
			&\quad + C(c,B^*,d,\kappa,\sigma,s)(1 + \|B(t)\|^2_{H^s})\left(\|E(t)\|^2_{H^s} + \|v(t)\|^2_{H^s}\right).
		\end{align*}
		Thus, from the two previous estimates that by choosing the initial data to be small suitably, it follows that for $t \in (0,\infty)$
		\begin{equation*}
			\|\bar{j}(t)\|^2_{H^s} \leq C(c,B^*,d,\kappa,\nu,\sigma,s)(1+\|B(t)\|^2_{H^s})\left(\|v(t)\|^2_{H^s} + \|E(t)\|^2_{H^s}\right).
		\end{equation*}
		As done previously for $t \in (0,\infty)$ and $s' \in [0,s)$ by using the decay in time of $\|\bar{j}(t)\|_{L^2}$ and \eqref{D-Gamma}, it leads to
		\begin{equation*}
			\|\bar{j}(t)\|_{H^{s'}} \leq C(d,s) \|\bar{j}(t)\|^\frac{s-s'}{s}_{L^2} \|\bar{j}(t)\|^\frac{s'}{s}_{H^s} \quad \to 0 \quad \text{as} \quad t \to \infty.
		\end{equation*}
		We now focus on proving that $\|\nabla B(t)\|_{L^2} \to 0$ as $t \to \infty$. In oder to do that, we need to bound $\|(E,B)\|_{L^2_t\dot{H}^1_x}$. The idea here is similar to that of in the proof of Theorem \ref{theo_global_small}. It can be seen from the Maxwell system in \eqref{D-NSM-GO-limit} that by testing $-(\nabla \times B,\nabla \times E)$ for $\tau \in (0,\infty)$
		\begin{align*}
			\|\nabla \times B\|_{L^2(0,\tau;L^2)}  
			&\leq \frac{2}{c}\|\Gamma_0\|^2_{H^s} + \|\nabla \times E\|^2_{L^2(0,\tau;L^2)} + \frac{1}{2} \|\bar{j}\|^2_{L^2(0,\tau;L^2)} + \frac{1}{2}\|\nabla \times B\|^2_{L^2(0,\tau;L^2)}.
		\end{align*}
		It remains to bound $\|E\|_{L^2(0,t;\dot{H}^1)}$ as follows. By applying the cross product to the Maxwell equations in \eqref{D-NSM-GO-limit} and testing the result by $c(\nabla \times E,\nabla \times B)$, we find that 
		\begin{equation*}
			\frac{1}{2}\frac{d}{dt}\|(\nabla \times E,\nabla \times B)\|^2_{L^2}
			+ c^2\sigma \|\nabla \times E\|^2_{L^2} =: I ,
		\end{equation*}
		where
		\begin{align*}
			I 
			&\leq \frac{1}{2} c^2\sigma \|\nabla \times E\|^2_{L^2} + C(d,\kappa,\sigma) \left(\|\bar{j}\|_{L^2} + \|v\|^2_{L^2}\right)\left(\|B\|^2_{H^s} + \|B^*\|^2_{L^\infty}\right).
		\end{align*}
		Therefore, it follows from the two previous estimates that for $t \in (0,\infty)$
		\begin{equation*}
			\|(E,B)\|^2_{L^2(0,t;\dot{H}^1)} \leq C(B^*,d,\kappa,\nu,\sigma,s)\|\Gamma_0\|^2_{H^s},
		\end{equation*}
		In addition, it can also be seen from \eqref{D-NSM-GO-limit}-\eqref{D-Gamma} that for $0 \leq t' \leq t < \infty$
		\begin{equation*}
			\|(\nabla \times E,\nabla \times B)(t)\|^2_{L^2} - \|(\nabla \times E,\nabla \times B)(t')\|^2_{L^2} \leq C(B^*,d,\kappa,\nu,\sigma,s)\|\Gamma_0\|^2_{H^s} (t-t').
		\end{equation*}
		Thus, by using $\textnormal{div}\, E = \textnormal{div}\,B = 0$, we obtain from the two previous bouds $\|(\nabla E,\nabla B)(t)\|_{L^2} \to 0$ as $t \to \infty$. Similarly, by using interpolation inequality and \eqref{D-Gamma} we also obtain for $s' \in [1,s)$ and for $t \in (0,\infty)$
		\begin{equation*}
			\|B(t)\|_{\dot{H}^{s'}} = \|\nabla B(t)\|_{\dot{H}^{s'-1}} \leq C(d,s) \|\nabla B(t)\|^\frac{s-s'}{s-1}_{L^2} \|\nabla B(t)\|^\frac{s'}{s-1}_{\dot{H}^{s-1}} \quad \to 0 \quad \text{as} \quad t \to \infty.
		\end{equation*}
		Moreover, we also have the large-time behavior of $B$ in $L^\infty$ norm as follows for $t \in (0,\infty)$ and for some $s'' \in (\frac{d}{2},s)$ as $t \to \infty$
		\begin{align*}
			\|B(t)\|_{L^\infty} &\leq C(d,s)\|B(t)\|^{1- \frac{d}{2s''}}_{L^2} \|B(t)\|^\frac{d}{2s''}_{\dot{H}^{s''}}  \quad \to 0,
			\\
			\|B(t)\|_{L^p} &\leq \|B(t)\|^\frac{2}{p}_{L^2} \|B(t)\|^\frac{p-2}{p}_{L^\infty} \quad \to 0
			.
		\end{align*}
		As a consequence, we find that $\|B(t)\|_{L^q_{\textnormal{loc}}} \to 0$ as $t \to \infty$ for $q \in [1,\infty]$. Thus, the proof of this part is complete.
	\end{proof}
		
	\begin{proof}[Proof of Theorem \ref{theo_stability}-$(iii)$] Under the smallness assumption of the initial data, an application of Part $(i)$ gives us the existence of $\Gamma^c$ to \eqref{NSM-GO*} with similar properties as those of $\Gamma$. Therefore, the rest of the proof can be done as that of Theorem \ref{theo_global_small}-$(ii)$. We omit the details and end the proof.
	\end{proof}

	%
	\section*{Acknowledgements} 
	%
	
	K. Kang’s work is supported by RS-2024-00336346. J. Lee’s work is supported by NRF-2021R1A2C1092830. D. D. Nguyen’s work is supported by NRF-2019R1A2C1084685 and NRF-2021R1A2C1092830. 
	
	%
	\section*{Disclosure statement} 
	%
	
	The authors report there are no competing interests to declare.

	%
	\section{Appendix} \label{sec:app}
	\subsection{Appendix A: Proofs of \eqref{I_21_22}, \eqref{para_21_1}-\eqref{para_21_3} and \eqref{I_n+1n1_4}-\eqref{I_n+1n2_3}}
	%
	
	In this subsection, we aim to provide the following technical result in the usual form, which is slightly similar to those of in \cite[Proposition 3.5]{Germain-Ibrahim-Masmoudi_2014} and \cite[Proposition 3.2]{Ibrahim-Keraani_2012}, where the authors considered in the form of Chemin-Lerner spaces instead. Let us recall the following hybrid homogeneous Sobolev spaces $\dot{H}^{s_1,s_2}$ of tempered distributions $f$ with $s_1,s_2 \in \mathbb{R}$ such that 
	\begin{equation*}
		\|f\|^2_{\dot{H}^{s_1,s_2}} := \sum_{q \in \mathbb{Z}, q \leq 0} 2^{2qs_1} \|\dot{\Delta}_q f\|^2_{L^2} + \sum_{q \in \mathbb{Z}, q > 0} 2^{2qs_2} \|\dot{\Delta}_q f\|^2_{L^2} < \infty,
	\end{equation*}
	where the usual homogeneous dyadic blocks are defined as in \cite{Bahouri_2019,Bahouri-Chemin-Danchin_2011}. 
	In addition, it can be seen that $\dot{H}^{s,s} \equiv \dot{H}^s$ for $s \in \mathbb{R}$. We also recall the space $L^2_{\textnormal{log}}$, which is the set of tempered distributions $f$ satisfying
	\begin{equation*}
		\|f\|^2_{L^2_{\textnormal{log}}} := \sum_{q \in \mathbb{Z}, q \leq 0} \|\dot{\Delta}_q f\|^2_{L^2} + \sum_{q \in \mathbb{Z}, q > 0} q \|\dot{\Delta}_q f\|^2_{L^2} < \infty.
	\end{equation*}
	In the proof of Theorem \ref{theo_global_small}, we used the following inequalities several times. Thus, we will provide the proofs below for completeness.
	
	\begin{lemma} \label{lem_paraproduct} Let $s > 1$ and $T \in (0,\infty]$. There exist positive time-independent constants $C$ and $C(s)$ such that the following estimates hold in two dimensions
		\begin{align}
			\label{para1}
			\|\dot{S}_2(\dot{R}(f,g))\|_{L^1(0,T;L^2)} &\leq C\|f\|_{L^2(0,T;L^2)}\|g\|_{L^2(0,T;\dot{H}^{1,0})};
			\\
			\label{para2}
			\|\dot{T}_{f} g\|_{L^2(0,T;\dot{B}^{-1}_{2,1})} &\leq C\|f\|_{L^2(0,T;L^2)}\|g\|_{L^\infty(0,T;L^2)};
			\\
			\label{para3}
			\|\dot{T}_{g} f\|_{L^2(0,T;\dot{B}^{-1}_{2,1})} &\leq C\|f\|_{L^2(0,T;L^2)}\|g\|_{L^\infty(0,T;L^2)};
			\\ \label{para4}
			\|(\textnormal{Id}-\dot{S}_2)(\dot{R}(f,g))\|_{L^2(0,T;\dot{B}^{-1}_{2,1})} &\leq C\|f\|_{L^2(0,T;L^2_{\textnormal{log}})}\|g\|_{L^\infty(0,T;L^2_{\textnormal{log}})};
			\\ \label{para5}
			\|\dot{S}_2(\dot{R}(f,g))\|_{L^1(0,T;\dot{H}^{s-1})} &\leq C(s) \|f\|_{L^2(0,T;L^2)} \|g\|_{L^2(0,T;\dot{H}^s)};
			\\ \label{para6}
			\|(\textnormal{Id}-\dot{S}_2)(\dot{R}(f,g))\|_{L^2(0,T;\dot{B}^{s-1}_{2,1})} &\leq C(s)\|f\|_{L^2(0,T;L^2)} \|g\|_{L^\infty(0,T;\dot{H}^s)}.
		\end{align}
	\end{lemma}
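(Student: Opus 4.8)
The plan is to prove Lemma \ref{lem_paraproduct} by working on each of the six estimates separately, using the standard tool that the homogeneous paraproduct and remainder operators $\dot{T}$ and $\dot{R}$ interact well with Littlewood--Paley truncations. The guiding principle throughout is Bernstein's inequality together with the spectral localization properties: $\dot{\Delta}_q(\dot{T}_f g)$ is supported in an annulus $|\xi| \sim 2^q$, while $\dot{\Delta}_q(\dot{R}(f,g))$ is supported in a ball $|\xi| \lesssim 2^q$. For the estimates involving the operators $\dot{S}_2$ and $\mathrm{Id}-\dot{S}_2$, I would exploit that $\dot{S}_2$ restricts to low frequencies $q \le 1$ and $\mathrm{Id}-\dot{S}_2$ to high frequencies $q \ge 2$, which is exactly where the hybrid space $\dot{H}^{1,0}$ (and the logarithmic space $L^2_{\mathrm{log}}$) does its job: on low frequencies one uses the $\dot{H}^1$-type weight and on high frequencies the plain $L^2$ weight.

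\textbf{First}, for \eqref{para1} and \eqref{para5} (the low-frequency remainder estimates), I would write $\dot{R}(f,g) = \sum_{|q-q'|\le 1}\dot\Delta_q f\,\dot\Delta_{q'}g$, apply $\dot{S}_2$ to restrict to dyadic blocks with index $\le 1$, and use $\|\dot\Delta_q(\dot R(f,g))\|_{L^2}\lesssim 2^{q d/2}\sum_{q'\ge q-2}\|\dot\Delta_{q'}f\|_{L^2}\|\dot\Delta_{q'}g\|_{L^2}$ via Bernstein in $d=2$; summing the finitely many low-frequency blocks and applying Cauchy--Schwarz in $q'$ (with the $\dot H^{1,0}$ or $\dot H^s$ weight on $g$) gives the product of norms, and then Cauchy--Schwarz in time handles the $L^1_T$ versus $L^2_T\times L^2_T$ split. \textbf{Next}, for the paraproduct estimates \eqref{para2}--\eqref{para3}, I would use that $\dot\Delta_q(\dot T_f g)$ involves $\dot S_{q'-1}f\,\dot\Delta_{q'}g$ with $q'\sim q$, so $\|\dot\Delta_q(\dot T_f g)\|_{L^2}\lesssim \|\dot S_{q-1}f\|_{L^\infty}\|\dot\Delta_q g\|_{L^2}$; bounding $\|\dot S_{q-1}f\|_{L^\infty}\lesssim 2^{q}\|f\|_{L^2}$ in $d=2$ by Bernstein produces exactly the $2^{-q}$ weight needed for the $\dot B^{-1}_{2,1}$ norm, and $\ell^1$-summability in $q$ follows from the $\dot\Delta_q g$ being $\ell^2$-summable against the decaying geometric factor. \textbf{Then} \eqref{para4} and \eqref{para6} are the high-frequency remainder estimates, handled as in \eqref{para1} but now summing over $q\ge 2$; here the key point is that the $\dot B^{-1}_{2,1}$ (resp.\ $\dot B^{s-1}_{2,1}$) norm requires an $\ell^1$ sum which, at the critical two-dimensional scaling, is only borderline convergent, and this is precisely where the logarithmic space $L^2_{\mathrm{log}}$ (with its extra factor of $q$ on high frequencies) must be invoked in \eqref{para4} to recover summability by Cauchy--Schwarz against $\sum_q q^{-1} \cdot q$.

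\textbf{The main obstacle} I anticipate is \eqref{para4}: the other estimates are either subcritical (so the dyadic sums converge with room to spare) or are paraproducts where the frequency localization gives a clean geometric factor, but the high-frequency remainder in two dimensions sits exactly at the critical Besov exponent $\dot B^{-1}_{2,1}$, where a naive estimate yields a divergent sum $\sum_{q>0}\|\dot\Delta_q f\|_{L^2}\|\dot\Delta_q g\|_{L^2}$ (no decaying weight survives after the Bernstein gain cancels the $\dot B^{-1}$ weight). The remedy is the logarithmic refinement: writing the sum as $\sum_{q>0}\big(q^{1/2}\|\dot\Delta_q f\|_{L^2}\big)\big(q^{1/2}\|\dot\Delta_q g\|_{L^2}\big)q^{-1}\cdot q$ is not quite right, so more carefully one splits the frequency indices and uses the definition $\|\cdot\|_{L^2_{\mathrm{log}}}^2 = \sum_{q\le 0}\|\dot\Delta_q\cdot\|_{L^2}^2 + \sum_{q>0}q\|\dot\Delta_q\cdot\|_{L^2}^2$ so that Cauchy--Schwarz against $\sum_{q>0} q^{-1} < \infty$ after distributing one logarithmic factor to each of $f$ and $g$ closes the estimate. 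I would present this borderline summation explicitly and treat the remaining five estimates more briefly, since they follow the same localization scheme without the logarithmic subtlety.
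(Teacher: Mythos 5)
Your overall architecture (spectral localization of $\dot T$ and $\dot R$, Bernstein in $d=2$, low/high frequency splitting matched to the weights of $\dot H^{1,0}$ and $L^2_{\textnormal{log}}$, H\"older in time) coincides with the paper's, and your treatment of \eqref{para1}, \eqref{para5}, \eqref{para6} would go through. But the two genuinely critical estimates are not closed by the argument you describe. For \eqref{para2}--\eqref{para3}: once you collapse $\|\dot S_{q-1}f\|_{L^\infty}$ to $2^{q}\|f\|_{L^2}$, the factor $2^{q}$ exactly cancels the Besov weight $2^{-q}$ and you are left with $\|f\|_{L^2}\sum_{q\in\mathbb Z}\|\dot\Delta_q g\|_{L^2}=\|f\|_{L^2}\|g\|_{\dot B^0_{2,1}}$, which is \emph{not} controlled by $\|f\|_{L^2}\|g\|_{L^2}$; no ``decaying geometric factor'' survives your reduction. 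The estimate only closes if you keep $\|\dot S_{q-1}f\|_{L^\infty}\leq C\sum_{k\leq q-2}2^{k}\|\dot\Delta_k f\|_{L^2}$, so that $2^{-q}\|\dot S_{q-1}f\|_{L^\infty}\leq C\sum_{k}2^{-(q-k)}\mathbbm{1}_{\{q-k\geq 2\}}\|\dot\Delta_k f\|_{L^2}$ is the convolution of an $\ell^1$ kernel with an $\ell^2$ sequence, hence $\ell^2$ in $q$ by Young's inequality for series, after which Cauchy--Schwarz against $\|\dot\Delta_q g\|_{L^2}\in\ell^2$ gives the required $\ell^1_q$ summability. This is exactly the ``Cauchy--Schwarz and Young inequalities for series'' step in the paper's proof, and it cannot be bypassed.

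For \eqref{para4} your diagnosis and your remedy are both incorrect. The sum $\sum_{q>0}\|\dot\Delta_q f\|_{L^2}\|\dot\Delta_q g\|_{L^2}$ is not divergent --- it is bounded by $\|f\|_{L^2}\|g\|_{L^2}$ via Cauchy--Schwarz. The true obstruction is the double sum: Bernstein gives $2^{-q}\|\dot\Delta_q\dot R(f,g)\|_{L^2}\lesssim\sum_{j\geq q-4}\|\dot\Delta_j f\|_{L^2}\big(\|\dot\Delta_{j-1}g\|_{L^2}+\|\dot\Delta_j g\|_{L^2}+\|\dot\Delta_{j+1}g\|_{L^2}\big)$ (the gain $2^{q}$ cancels the weight but the tail over $j$ remains), and summing over $q\geq 2$ and exchanging the order of summation produces a \emph{linear} factor $\#\{q:2\leq q\leq j+4\}\approx j$ in front of each term. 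The fix is to split $j=j^{1/2}\cdot j^{1/2}$ and apply Cauchy--Schwarz, which lands precisely on $\|f\|_{L^2_{\textnormal{log}}}\|g\|_{L^2_{\textnormal{log}}}$. Your proposed ``Cauchy--Schwarz against $\sum_{q>0}q^{-1}<\infty$'' cannot work because the harmonic series diverges. Since \eqref{para2}--\eqref{para4} are exactly the borderline two-dimensional estimates that motivate the lemma, these two gaps need to be repaired before the proof is complete.
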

	\begin{proof}[Proof of Lemma \ref{lem_paraproduct}] The proofs of \eqref{para1}-\eqref{para6} are given as follows.
		
		\begin{enumerate}
			\item[1.] We begin with the proof of \eqref{para1} as follows. 
			\begin{align*}
				I_1 &:= \|\dot{S}_2(\dot{R}(f,g))\|_{L^2} 
				\\
				&\leq C\sum_{j \in \mathbb{Z}, j > 0}  \|\dot{\Delta}_j f\|_{L^2} \left(\|\dot{\Delta}_{j-1} g\|_{L^2} + \|\dot{\Delta}_{j} g\|_{L^2}+ \|\dot{\Delta}_{j+1} g\|_{L^2}\right)
				\\
				&\quad + C\sum_{j \in \mathbb{Z}, j \leq 0}  2^j \|\dot{\Delta}_j f\|_{L^2} \left(\|\dot{\Delta}_{j-1} g\|_{L^2} + \|\dot{\Delta}_{j} g\|_{L^2}+ \|\dot{\Delta}_{j+1} g\|_{L^2}\right)
				\\
				&\leq C\|f\|_{L^2} \|g\|_{\dot{H}^{1,0}},
			\end{align*}
			here we also used the following Bernstein-type inequality (see \cite{Bahouri-Chemin-Danchin_2011}) for $q \in \mathbb{Z}$
			\begin{equation*}
				\|\dot{\Delta}_q f\|_{L^2} \leq C 2^q \|\dot{\Delta}_q f\|_{L^1};
			\end{equation*}
			the fact that $\text{supp}(\mathcal{F}(\dot{\Delta}_j f (\dot{\Delta}_{j-1} g + \dot{\Delta}_j g + \dot{\Delta}_{j+1} g))) \subseteq 2^j B_{24}$; the boundedness of $\dot{\Delta}_q$ from $L^1$ to $L^1$; and the following fact 
			\begin{equation*}
				\sum_{j \in \mathbb{Z}, j < 0, q \leq j+4} 2^q = C 2^j.
			\end{equation*} 
			Therefore, 
			the proof follows 
			by integrating in time of $I_1$. 
			
			\item[2.] We continue with the proof of \eqref{para2} as follows. Similar to the previous case, it follows from the definition of $\dot{T}_f g$ that $\dot{T}_f g = \sum_{j \in \mathbb{Z}} \dot{S}_{j-1} f \dot{\Delta}_j g$, which yields for each $q \in \mathbb{Z}$
			\begin{equation*}
				\|\dot{\Delta}_q (\dot{T}_f g)\|_{L^2} \leq \sum_{j \in \mathbb{Z}} \|\dot{\Delta}_q(\dot{S}_{j-1} f \dot{\Delta}_j g)\|_{L^2}
				\leq C\sum_{j \in \mathbb{Z}, |j-q| \leq 4} \|\dot{\Delta}_q(\dot{S}_{j-1} f \dot{\Delta}_j g)\|_{L^2},
			\end{equation*}
			where we used the fact that $\text{supp}(\mathcal{F}(\dot{S}_{j-1} f \dot{\Delta}_j g)) \subseteq 2^j \tilde{\mathcal{C}}$, where $\tilde{\mathcal{C}} := B_{\frac{2}{3}} + \mathcal{C}$ in the first step. Therefore,
			\begin{align*}
				I_2 &:= \|\dot{T}_f g\|_{L^2(0,T;\dot{B}^{-1}_{2,1})} 
				\\
				&\leq C\left(\int^T_0 \left(\sum_{q \in \mathbb{Z}} 2^{-q}\sum_{j \in \mathbb{Z}, |j-q| \leq 4} \sum_{k \in \mathbb{Z}, k \leq j-2}  2^{k}  \|\dot{\Delta}_k f\|_{L^2} \|\dot{\Delta}_j g\|_{L^2} \right)^2 \,d\tau\right)^\frac{1}{2},
			\end{align*}
			where we used the boundedness of $\dot{\Delta}_q$ from $L^2$ to $L^2$ and the following Bernstein-type inequality (see \cite{Bahouri-Chemin-Danchin_2011}) for $j \in \mathbb{Z}$, since $\text{supp}(\mathcal{F}(\dot{\Delta}_j)) \subseteq 2^j \mathcal{C}$
			\begin{equation*}
				\|\dot{\Delta}_j f\|_{L^\infty} \leq C 2^j \|\dot{\Delta}_j f\|_{L^2}.
			\end{equation*}
			Here, since the summation on $j$ is finite in terms of $q$. Then, we can consider for example the case that $j = q$. Other cases $j \in \{q \pm 1, ..., q \pm 4\}$ can be done in a similar way in which we will omit further details. Thus, if $j = q$ then the above computation can be continued as follows by using Cauchy–Schwarz and Young inequalities for series
			\begin{align*}
				I_2(j=q) 
				&\leq C\|f\|_{L^2(0,T;L^2)} \|g\|_{L^\infty(0,T;L^2)}.
			\end{align*}
			Therefore, the proof of this inequality is complete.
			
			\item[3.] By exchanging the role of $f$ and $g$, the proof of \eqref{para3} follows from that of \eqref{para2}. We omit the details.
			
			\item[4.] Next, we focus on the proof of \eqref{para4}. Similar to the proofs of \eqref{para1} and \eqref{para2}, we find that
			\begin{align*}
				I_3 &:= \|(\textnormal{Id}-\dot{S}_2)(\dot{R}(f,g))\|_{\dot{B}^{-1}_{2,1}} 
				\\
				&\leq C\sum_{q \in \mathbb{Z}, q \geq 2}  \sum_{j \in  \mathbb{Z}, j \geq q-4} \|\dot{\Delta}_j f\|_{L^2} \left(\|\dot{\Delta}_{j-1} g\|_{L^2} + \|\dot{\Delta}_{j} g\|_{L^2}+ \|\dot{\Delta}_{j+1} g\|_{L^2}\right)
				\\
				&\leq C\|f\|_{L^2_{\textnormal{log}}} \|g\|_{L^2_\textnormal{log}}.
			\end{align*}
			Thus, the proof follows after integrating in $L^2$ in time of $I_3$.
			
			\item[5.] Similar to the proof of \eqref{para1}, it follows that
			\begin{align*}
				I_4 &:= \|\dot{S}_2(\dot{R}(f,g))\|_{\dot{H}^{s-1}} 
				\\
				&\leq C(s) \sum_{j \in  \mathbb{Z}} \left(\sum_{q \in \mathbb{Z}, q \leq \min\{2,j+4\}} 2^{2qs} \|\dot{\Delta}_j f\|^2_{L^2} \left(\|\dot{\Delta}_{j-1}g\|^2_{L^2} + \|\dot{\Delta}_jg\|^2_{L^2} + \|\dot{\Delta}_{j-1}g\|^2_{L^2}\right)\right)^\frac{1}{2}
				\\
				&\leq C(s) \|j\|_{\dot{B}^0_{2,2}} \|g\|_{\dot{B}^{s}_{2,2}},
			\end{align*}
			here we also used Bernstein-type, Holder and Minkowski (for series, see \cite{Stein_1970}) inequalitites. Thus, integrating in time of $I_4$ implies \eqref{para5}.
			
			\item[6.] Finally, we prove \eqref{para6} as follows. Similar to the proof of \eqref{para4},
			we find that
			\begin{align*}
				I_5 &:= \|(\textnormal{Id}-\dot{S}_2)(\dot{R}(f,g))\|_{\dot{B}^{s-1}_{2,1}} 
				\\
				&\leq C\sum_{q \in \mathbb{Z}, q \geq 2} 2^{sq} \sum_{j \in  \mathbb{Z}, j \geq q-4} \|\dot{\Delta}_j f\|_{L^2} \left(\|\dot{\Delta}_{j-1} g\|_{L^2} + \|\dot{\Delta}_{j} g\|_{L^2}+ \|\dot{\Delta}_{j+1} g\|_{L^2}\right)
				\\
				&\leq C(s)\|f\|_{\dot{B}^0_{2,2}} \|g\|_{\dot{B}^s_{2,2}}.
			\end{align*}
			Thus, the proof follows after integrating in $L^2$ in time of $I_5$.
		\end{enumerate}
	\end{proof}
	
%
%

	%
	\subsection{Appendix B: Parabolic regularity}
	%
	
	Let us consider the following fractional heat equation for $\alpha \geq 0$ and $d \geq 1$
	\begin{equation} \label{F-heat}
		\partial_t w + \nu(-\Delta)^\alpha w = f \qquad \text{in}\quad  (0,T) \times \mathbb{R}^d \qquad \text{and} \qquad w_{|_{t= 0}} = w_0.
	\end{equation}
	It is well-known that the solution to \eqref{F-heat} can be represented by using Duhamel formula
	\begin{equation*}
		w(t) = \exp\{t\nu(-\Delta)^\alpha\} w_0 + \int^t_0 \exp\{(t-\tau)\nu(-\Delta)^\alpha\} f(\tau) \,d\tau \qquad \text{for} \quad t \in (0,T),
	\end{equation*}
	where we have been used the notation $\exp\{t \nu(-\Delta)^\alpha\} f := \mathcal{F}^{-1}(\exp\{-\nu t|\xi|^{2\alpha}\}\mathcal{F}(f)(\xi))$.
	In the sequel, we aim to recall the following result in which its proof mostly follows the ideas in \cite[Proposition 3.1]{Arsenio-Gallagher_2020}, where the authors focused  on the case $\alpha = 1$. We note that for a similar result in the form of Chemin-Lerner spaces, see \cite[Proposition 2]{Chae-Lee_2004}.
	
	\begin{lemma}[\textnormal{\cite[Proposition 7.1]{KLN_2024_2}}] \label{pro-F_heat} 
		Let $d \geq 1$ and $w$ be a solution to \eqref{F-heat} with $w_{|_{t= 0}} = w_0$, $\alpha \in [0,\infty)$, and $\nu \in (0,\infty)$. Assume that $\delta_0 \in \mathbb{R}$, $p \in [1,\infty]$, $1 < r \leq m < \infty$, $1 \leq q \leq m$, $T \in (0,\infty]$, $w_0 \in \dot{B}^{\delta_0 + 2\alpha}_{p,q}(\mathbb{R}^d))$ and $f \in L^r(0,T;\dot{B}^{\delta_0 + \frac{2\alpha}{r}}_{p,q}(\mathbb{R}^d)$. Then there are some positive constants $C_1 = C_1(\alpha,d,\delta_0,m,\nu,p,q,r)$ and $C_2 = C_2(\alpha,d,\delta_0,m,\nu,p,q)$ such that
		\begin{equation} \label{F_heat_0}
			\|w\|_{L^m(0,T;\dot{B}^{\delta_0 + 2\alpha + \frac{2\alpha}{m}}_{p,q}(\mathbb{R}^d))} \leq C_1 \|f\|_{L^r(0,T;\dot{B}^{\delta_0 + \frac{2\alpha}{r}}_{p,q}(\mathbb{R}^d))} + C_2 \|w_0\|_{\dot{B}^{\delta_0 + 2\alpha}_{p,q}(\mathbb{R}^d)}.
		\end{equation}
	\end{lemma}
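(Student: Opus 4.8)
The plan is to represent the solution via the Duhamel formula recorded just above the statement, split $w = w_{\mathrm{h}} + w_{\mathrm{nh}}$ into the homogeneous part $w_{\mathrm{h}}(t) = \exp\{t\nu(-\Delta)^\alpha\}w_0$ and the inhomogeneous part $w_{\mathrm{nh}}(t) = \int_0^t \exp\{(t-\tau)\nu(-\Delta)^\alpha\}f(\tau)\,d\tau$, and estimate each separately after a Littlewood--Paley decomposition. The single analytic input I would use is the standard frequency-localized decay estimate for the fractional heat semigroup (see \cite{Bahouri-Chemin-Danchin_2011}): there are constants $C,c>0$ such that for every $j \in \mathbb{Z}$ and every tempered distribution $g$,
\begin{equation*}
    \|\exp\{t\nu(-\Delta)^\alpha\}\dot{\Delta}_j g\|_{L^p} \leq C\exp\{-c\nu t 2^{2\alpha j}\}\|\dot{\Delta}_j g\|_{L^p}.
\end{equation*}
Everything then reduces to elementary one-variable integrals in $t$ together with an $\ell^q$ summation in $j$.

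For the homogeneous part, I would apply $\dot{\Delta}_j$, insert the decay estimate, take the $L^m(0,T)$ norm in time, and compute $\big(\int_0^\infty \exp\{-cm\nu t 2^{2\alpha j}\}\,dt\big)^{1/m} = C(\nu 2^{2\alpha j})^{-1/m}$; multiplying by the weight $2^{j(\delta_0+2\alpha+2\alpha/m)}$ exactly cancels the extra $2^{-2\alpha j/m}$ and leaves the weight $2^{j(\delta_0+2\alpha)}$ matching the $\dot{B}^{\delta_0+2\alpha}_{p,q}$ norm of $w_0$. For the inhomogeneous part, $\|\dot{\Delta}_j w_{\mathrm{nh}}(t)\|_{L^p}$ is dominated by the time-convolution of the kernel $K_j(s) = \exp\{-c\nu s 2^{2\alpha j}\}\mathbbm{1}_{s>0}$ against $\|\dot{\Delta}_j f\|_{L^p}$, so Young's convolution inequality in time with $1+\frac1m = \frac1r + \frac1a$ gives $\|K_j\|_{L^a} = C(\nu 2^{2\alpha j})^{-1/a}$ with $\frac{1}{a} = 1+\frac1m-\frac1r$. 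The crucial bookkeeping is that $2^{j(\delta_0+2\alpha+2\alpha/m)}(2^{2\alpha j})^{-1/a} = 2^{j(\delta_0+2\alpha/r)}$, which is precisely the weight for the $\dot{B}^{\delta_0+2\alpha/r}_{p,q}$ regularity of $f$ appearing on the right-hand side of \eqref{F_heat_0}. Here the hypotheses $1 < r \leq m < \infty$ guarantee $1 \leq a < \infty$, so Young's inequality is applicable.

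The step I expect to be the main obstacle, or at least the one requiring the most care, is the passage from the per-block estimates to the full Besov norms: one must exchange the $L^m$ norm in time with the $\ell^q$ summation over dyadic blocks. This is where Minkowski's inequality for series is used, and it is exactly the conditions $1 \leq q \leq m$ in the statement that make the exchange legitimate (one moves the $\ell^q$ sum inside the $L^m$ integral for $w_{\mathrm{h}}$, and combines Young's inequality with Minkowski for $w_{\mathrm{nh}}$). I would carry this out following the scheme of \cite[Proposition 3.1]{Arsenio-Gallagher_2020} for the case $\alpha = 1$, checking that every index manipulation above remains valid for general $\alpha \in [0,\infty)$; in particular the degenerate case $\alpha = 0$ reduces the semigroup to the uniform exponential factor $\exp\{-c\nu t\}$ and the regularity indices collapse to $\delta_0$ on both sides, so no separate argument is needed. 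Tracking the explicit dependence of $C_1,C_2$ on $(\alpha,d,\delta_0,m,\nu,p,q,r)$ then follows by bookkeeping the constants produced in the $t$-integrals and in Young's inequality.
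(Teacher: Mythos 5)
Your proposal is correct and follows essentially the route the paper indicates: the paper recalls this lemma from \cite{KLN_2024_2} and states that its proof follows \cite[Proposition 3.1]{Arsenio-Gallagher_2020} and rests on the frequency-localized semigroup decay of Lemma \ref{lem-F_heat}, which is exactly the single analytic input you invoke, combined with the same Duhamel splitting, Young's convolution inequality in time, and the Minkowski exchange justified by $q \leq m$. Your exponent bookkeeping ($2^{j(\delta_0+2\alpha+2\alpha/m)}(2^{2\alpha j})^{-1/a}=2^{j(\delta_0+2\alpha/r)}$ with $\tfrac1a=1+\tfrac1m-\tfrac1r$) and the treatment of the degenerate case $\alpha=0$ are both accurate.
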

	
	Recall that the proof of  Lemma \ref{pro-F_heat} is based on the following technical lemma in which its proof closely follows the ideas in \cite[Lemma 2.4]{Bahouri-Chemin-Danchin_2011}, \cite[Lemma 2.1]{Chemin_1999}, where the authors considered  the case $\alpha = 1$. See also \cite[Lemma 1]{Chae-Lee_2004} with a similar proof in the case $d = 3$ and $\alpha \geq 0$.
	
	\begin{lemma}[\textnormal{\cite[Lemma 7.3]{KLN_2024_2}}] \label{lem-F_heat}
		Let $d \geq 1$ and $\mathcal{C}(c_1,c_2)$ be an annulus with the smaller radius $c_1 > 0$ and the bigger radius $c_2 > 0$. There exist positive constants $C_3 = C_3(\alpha,c_1,c_2,d)$ and $C_4 = C_4(\alpha,c_1,d)$ such that for any $\alpha \in [0,\infty)$, $p \in [1,\infty]$ and any pair $(t, \lambda)$ of positive real numbers the following property holds. If $\textnormal{supp}(\mathcal{F}(u)) \subseteq \lambda \mathcal{C}$ then 
		\begin{equation} \label{F_heat_estimate}
			\|\exp\{t\nu(-\Delta)^\alpha\} u\|_{L^p(\mathbb{R}^d)} \leq C_4 \exp\{-C_3\nu t\lambda^{2\alpha}\}\|u\|_{L^p(\mathbb{R}^d)}.
		\end{equation}
	\end{lemma}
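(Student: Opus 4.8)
The plan is to realize the operator $\exp\{t\nu(-\Delta)^\alpha\}$ acting on a frequency-annulus-supported function as convolution with an explicit kernel, and then to bound that kernel in $L^1$ by the claimed exponential factor, after which Young's convolution inequality closes the estimate for every $p\in[1,\infty]$ at once. Concretely, I would fix once and for all a cutoff $\phi\in C^\infty_c(\mathbb{R}^d)$ with $\phi\equiv 1$ on $\mathcal{C}(c_1,c_2)$ and $\textnormal{supp}\,\phi\subseteq\mathcal{C}'$, where $\mathcal{C}'$ is a slightly larger annulus whose inner radius $c_1'$ satisfies $c_1'\geq c_1/2>0$. Since $\textnormal{supp}(\mathcal{F}(u))\subseteq\lambda\mathcal{C}$, one has $e^{-\nu t|\xi|^{2\alpha}}\mathcal{F}(u)(\xi)=e^{-\nu t|\xi|^{2\alpha}}\phi(\xi/\lambda)\mathcal{F}(u)(\xi)$, so that $\exp\{t\nu(-\Delta)^\alpha\}u=K_{t,\lambda}*u$ with $K_{t,\lambda}:=\mathcal{F}^{-1}(e^{-\nu t|\cdot|^{2\alpha}}\phi(\cdot/\lambda))$. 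By Young's inequality it then suffices to prove $\|K_{t,\lambda}\|_{L^1}\leq C_4\,e^{-C_3\nu t\lambda^{2\alpha}}$ uniformly.

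Second, I would rescale in order to remove the dependence on $\lambda$. Substituting $\xi=\lambda\eta$ gives $K_{t,\lambda}(x)=\lambda^d G_s(\lambda x)$, where $s:=\nu t\lambda^{2\alpha}$ and $G_s:=\mathcal{F}^{-1}\Phi_s$ with $\Phi_s(\eta):=e^{-s|\eta|^{2\alpha}}\phi(\eta)$; this change of variables is exactly the point where the annulus scaling collapses into the single scalar parameter $s\geq 0$. Since $L^1$ is invariant under this parabolic rescaling, $\|K_{t,\lambda}\|_{L^1}=\|G_s\|_{L^1}$, so the whole problem reduces to the uniform bound $\|\mathcal{F}^{-1}\Phi_s\|_{L^1}\leq C\,e^{-C_3 s}$ for all $s\geq 0$.

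Third, I would establish this kernel bound by the standard strategy of gaining spatial decay through integration by parts in the frequency variable, with the exponential factor extracted from the support geometry. On $\textnormal{supp}\,\phi$ one has $|\eta|\geq c_1'$, hence $|\eta|^{2\alpha}\geq(c_1')^{2\alpha}=:2C_3$, and there $|\eta|^{2\alpha}$ is a smooth symbol. Differentiating $\Phi_s$ produces, via the chain rule, a finite sum of terms of the form $s^m P(\eta)e^{-s|\eta|^{2\alpha}}$ with $m\leq|\beta|$ and $P$ smooth and bounded on the support; using $e^{-s|\eta|^{2\alpha}}\leq e^{-2C_3 s}$ together with $\sup_{s\geq 0}s^m e^{-C_3 s}<\infty$ yields $|\partial^\beta_\eta\Phi_s(\eta)|\leq C_\beta\,e^{-C_3 s}$ uniformly in $s$. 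Integrating by parts then gives, for any fixed $N$ with $2N>d$, the bound $(1+|y|^{2N})\,|\mathcal{F}^{-1}\Phi_s(y)|\leq C\,e^{-C_3 s}$, whence $\|\mathcal{F}^{-1}\Phi_s\|_{L^1}\leq C\,e^{-C_3 s}\int(1+|y|^{2N})^{-1}\,dy\leq C'e^{-C_3 s}$. Undoing the two reductions delivers $\|K_{t,\lambda}\|_{L^1}\leq C_4\,e^{-C_3\nu t\lambda^{2\alpha}}$ and hence \eqref{F_heat_estimate}.

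The hard part will be the uniformity in $s$ (equivalently in $t$ and $\lambda$) of the frequency-side derivative bounds. This rests on two facts that must be combined: that the Fourier support avoids a neighborhood of the origin, so that $|\eta|^{2\alpha}$ is a smooth, bounded-below symbol on $\textnormal{supp}\,\phi$ — which is precisely why the annulus hypothesis $c_1>0$ is essential and which covers all $\alpha\geq 0$, including non-integer $\alpha$ and the trivial scalar case $\alpha=0$ — and the elementary estimate $\sup_{s\geq 0}s^m e^{-C_3 s}<\infty$, which absorbs the polynomial-in-$s$ growth generated by differentiating the Gaussian-type factor into the exponential decay. Everything else I expect to be routine. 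I would also note that the argument parallels the classical frequency-localized heat estimates of Chemin and of Bahouri--Chemin--Danchin, here specialized to the fractional power $\alpha$.
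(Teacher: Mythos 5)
Your proof is correct and is exactly the argument the paper points to for this lemma (it cites \cite[Lemma 2.4]{Bahouri-Chemin-Danchin_2011} and \cite[Lemma 2.1]{Chemin_1999}, adapted to the fractional power $\alpha$): cutoff on the annulus, reduction to an $L^1$ kernel bound via Young's inequality, parabolic rescaling to the single parameter $s=\nu t\lambda^{2\alpha}$, and integration by parts using that $|\eta|^{2\alpha}$ is a smooth symbol bounded below on the support. The only cosmetic discrepancy is that your constants naturally depend on the outer radius $c_2$ as well (through the size of $\mathrm{supp}\,\phi$ and the bounds on $P$), whereas the statement lists $C_4=C_4(\alpha,c_1,d)$; this is an imprecision in the statement rather than a gap in your argument.
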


	\addcontentsline{toc}{section}{References}

\begin{thebibliography}{10}
		
		\bibitem{Acheritogaray-Degond-Frouvelle-Liu_2011}
		Marion Acheritogaray, Pierre Degond, Amic Frouvelle, and Jian-Guo Liu.
		\newblock Kinetic formulation and global existence for the
		{H}all-{M}agneto-hydrodynamics system.
		\newblock {\em Kinet. Relat. Models}, 4(4):901--918, 2011.
		
		\bibitem{Aluie-Eyink_2010}
		Hussein Aluie and Gregory~L. Eyink.
		\newblock Scale locality of magnetohydrodynamic turbulence.
		\newblock {\em Phys. Rev. Lett.}, 104:081101, Feb 2010.
		
		\bibitem{Arsenio-Gallagher_2020}
		Diogo Ars\'{e}nio and Isabelle Gallagher.
		\newblock Solutions of {N}avier-{S}tokes-{M}axwell systems in large energy
		spaces.
		\newblock {\em Trans. Amer. Math. Soc.}, 373(6):3853--3884, 2020.
		
		\bibitem{Arsenio-Ibrahim-Masmoudi_2015}
		Diogo Ars\'{e}nio, Slim Ibrahim, and Nader Masmoudi.
		\newblock A derivation of the magnetohydrodynamic system from
		{N}avier-{S}tokes-{M}axwell systems.
		\newblock {\em Arch. Ration. Mech. Anal.}, 216(3):767--812, 2015.
		
		\bibitem{Arsenio-SaintRaymond_2016}
		Diogo Ars\'{e}nio and Laure Saint-Raymond.
		\newblock {\em From the {V}lasov-{M}axwell-{B}oltzmann system to incompressible
			viscous electro-magneto-hydrodynamics. {V}ol. 1}.
		\newblock EMS Monographs in Mathematics. European Mathematical Society (EMS),
		Z\"{u}rich, 2019.
		
		\bibitem{Bae-Kang_2023}
		Hantaek Bae and Kyungkeun Kang.
		\newblock On the existence and temporal asymptotics of solutions for the two
		and half dimensional {H}all {MHD}.
		\newblock {\em J. Math. Fluid Mech.}, 25(2):Paper No. 24, 30, 2023.
		
		\bibitem{Bahouri_2019}
		Hajer Bahouri.
		\newblock The {L}ittlewood-{P}aley theory: a common thread of many works in
		nonlinear analysis.
		\newblock {\em Eur. Math. Soc. Newsl.}, (112):15--23, 2019.
		
		\bibitem{Bahouri-Chemin-Danchin_2011}
		Hajer Bahouri, Jean-Yves Chemin, and Rapha\"{e}l Danchin.
		\newblock {\em Fourier analysis and nonlinear partial differential equations},
		volume 343 of {\em Grundlehren der mathematischen Wissenschaften [Fundamental
			Principles of Mathematical Sciences]}.
		\newblock Springer, Heidelberg, 2011.
		
		\bibitem{Beekie-Buckmaster-Vicol_2020}
		Rajendra Beekie, Tristan Buckmaster, and Vlad Vicol.
		\newblock Weak solutions of ideal {MHD} which do not conserve magnetic
		helicity.
		\newblock {\em Ann. PDE}, 6(1):Paper No. 1, 40, 2020.
		
		\bibitem{Biskamp_1993}
		Dieter Biskamp.
		\newblock {\em Nonlinear magnetohydrodynamics}, volume~1 of {\em Cambridge
			Monographs on Plasma Physics}.
		\newblock Cambridge University Press, Cambridge, 1993.
		
		\bibitem{Boyer-Fabrie_2013}
		Franck Boyer and Pierre Fabrie.
		\newblock {\em Mathematical tools for the study of the incompressible
			{N}avier-{S}tokes equations and related models}, volume 183 of {\em Applied
			Mathematical Sciences}.
		\newblock Springer, New York, 2013.
		
		\bibitem{Brezis_2011}
		Haim Brezis.
		\newblock {\em Functional analysis, {S}obolev spaces and partial differential
			equations}.
		\newblock Universitext. Springer, New York, 2011.
		
		\bibitem{Caflisch-Klapper-Steele_1997}
		Russel~E. Caflisch, Isaac Klapper, and Gregory Steele.
		\newblock Remarks on singularities, dimension and energy dissipation for ideal
		hydrodynamics and {MHD}.
		\newblock {\em Comm. Math. Phys.}, 184(2):443--455, 1997.
		
		\bibitem{Chae-Degond-Liu_2014}
		Dongho Chae, Pierre Degond, and Jian-Guo Liu.
		\newblock Well-posedness for {H}all-magnetohydrodynamics.
		\newblock {\em Ann. Inst. H. Poincar\'{e} C Anal. Non Lin\'{e}aire},
		31(3):555--565, 2014.
		
		\bibitem{Chae-Lee_2004}
		Dongho Chae and Jihoon Lee.
		\newblock On the global well-posedness and stability of the {N}avier-{S}tokes
		and the related equations.
		\newblock In {\em Contributions to current challenges in mathematical fluid
			mechanics}, Adv. Math. Fluid Mech., pages 31--51. Birkh\"{a}user, Basel,
		2004.
		
		\bibitem{Chae-Lee_2014}
		Dongho Chae and Jihoon Lee.
		\newblock On the blow-up criterion and small data global existence for the
		{H}all-magnetohydrodynamics.
		\newblock {\em J. Differential Equations}, 256(11):3835--3858, 2014.
		
		\bibitem{Chae-Weng_2016}
		Dongho Chae and Shangkun Weng.
		\newblock Singularity formation for the incompressible {H}all-{MHD} equations
		without resistivity.
		\newblock {\em Ann. Inst. H. Poincar\'e{} C Anal. Non Lin\'eaire},
		33(4):1009--1022, 2016.
		
		\bibitem{Chae-Wolf_2015}
		Dongho Chae and J\"{o}rg Wolf.
		\newblock On partial regularity for the steady {H}all magnetohydrodynamics
		system.
		\newblock {\em Comm. Math. Phys.}, 339(3):1147--1166, 2015.
		
		\bibitem{Chae-Wolf_2017}
		Dongho Chae and J\"{o}rg Wolf.
		\newblock Regularity of the {$3D$} stationary {H}all {M}agnetohydrodynamic
		equations on the plane.
		\newblock {\em Comm. Math. Phys.}, 354(1):213--230, 2017.
		
		\bibitem{Chang-Lkhagvasuren-Yang_2023}
		Tongkeun Chang, Bataa Lkhagvasuren, and Minsuk Yang.
		\newblock Global existence of solutions for the {H}all-{MHD} equations,
		arXiv:2310.17398, 2023.
		
		\bibitem{Chemin_1999}
		Jean-Yves Chemin.
		\newblock Th\'{e}or\`emes d'unicit\'{e} pour le syst\`eme de {N}avier-{S}tokes
		tridimensionnel.
		\newblock {\em J. Anal. Math.}, 77:27--50, 1999.
		
		\bibitem{Chemin-Gallagher_2006}
		Jean-Yves Chemin and Isabelle Gallagher.
		\newblock On the global wellposedness of the 3-{D} {N}avier-{S}tokes equations
		with large initial data.
		\newblock {\em Ann. Sci. \'Ecole Norm. Sup. (4)}, 39(4):679--698, 2006.
		
		\bibitem{Chen-Nie-Ye_2024}
		Qionglei Chen, Yao Nie, and Weikui Ye.
		\newblock Sharp ill-posedness for the non-resistive {MHD} equations in
		{S}obolev spaces.
		\newblock {\em J. Funct. Anal.}, 286(6):Paper No. 110302, 22, 2024.
		
		\bibitem{Constantin-Pasqualotto_2023}
		Peter Constantin and Federico Pasqualotto.
		\newblock Magnetic relaxation of a {V}oigt-{MHD} system.
		\newblock {\em Comm. Math. Phys.}, 402(2):1931--1952, 2023.
		
		\bibitem{Dai_2020}
		Mimi Dai.
		\newblock Local well-posedness of the {H}all-{MHD} system in {$H^s(\Bbb R^n)$}
		with {$s>\frac n2$}.
		\newblock {\em Math. Nachr.}, 293(1):67--78, 2020.
		
		\bibitem{Dai_2021_2}
		Mimi Dai.
		\newblock Local well-posedness for the {H}all-{MHD} system in optimal {S}obolev
		spaces.
		\newblock {\em J. Differential Equations}, 289:159--181, 2021.
		
		\bibitem{Dai_2021}
		Mimi Dai.
		\newblock Nonunique weak solutions in {L}eray-{H}opf class for the
		three-dimensional {H}all-{MHD} system.
		\newblock {\em SIAM J. Math. Anal.}, 53(5):5979--6016, 2021.
		
		\bibitem{Danchin-Tan_2021}
		Rapha\"{e}l Danchin and Jin Tan.
		\newblock On the well-posedness of the {H}all-magnetohydrodynamics system in
		critical spaces.
		\newblock {\em Comm. Partial Differential Equations}, 46(1):31--65, 2021.
		
		\bibitem{Danchin-Tan_2022}
		Rapha\"{e}l Danchin and Jin Tan.
		\newblock The global solvability of the {H}all-magnetohydrodynamics system in
		critical {S}obolev spaces.
		\newblock {\em Commun. Contemp. Math.}, 24(10):Paper No. 2150099, 33, 2022.
		
		\bibitem{Davidson_2017}
		Peter~A. Davidson.
		\newblock {\em Introduction to magnetohydrodynamics}.
		\newblock Cambridge Texts in Applied Mathematics. Cambridge University Press,
		Cambridge, second edition, 2017.
		
		\bibitem{Dumas-Sueur_2014}
		Eric Dumas and Franck Sueur.
		\newblock On the weak solutions to the {M}axwell-{L}andau-{L}ifshitz equations
		and to the {H}all-magneto-hydrodynamic equations.
		\newblock {\em Comm. Math. Phys.}, 330(3):1179--1225, 2014.
		
		\bibitem{Evans_2010}
		Lawrence~C. Evans.
		\newblock {\em Partial differential equations}, volume~19 of {\em Graduate
			Studies in Mathematics}.
		\newblock American Mathematical Society, Providence, RI, second edition, 2010.
		
		\bibitem{Fan-Ozawa_2020}
		Jishan Fan and Tohru Ozawa.
		\newblock Global solutions to the {M}axwell-{N}avier-{S}tokes system in a
		bounded domain in 2{D}.
		\newblock {\em Z. Angew. Math. Phys.}, 71(4):Paper No. 136, 6, 2020.
		
		\bibitem{Faraco-Lindberg_2020}
		Daniel Faraco and Sauli Lindberg.
		\newblock Proof of {T}aylor's conjecture on magnetic helicity conservation.
		\newblock {\em Comm. Math. Phys.}, 373(2):707--738, 2020.
		
		\bibitem{Faraco-Lindberg_2018}
		Daniel Faraco and Sauli Lindberg.
		\newblock Magnetic helicity and subsolutions in ideal {MHD}, arXiv:1801.04896,
		2018.
		
		\bibitem{Faraco-Lindberg-MacTaggart-Valli_2022}
		Daniel Faraco, Sauli Lindberg, David MacTaggart, and Alberto Valli.
		\newblock On the proof of {T}aylor's conjecture in multiply connected domains.
		\newblock {\em Appl. Math. Lett.}, 124:Paper No. 107654, 7, 2022.
		
		\bibitem{Faraco-Lindberg-Szekelyhidi_2022}
		Daniel Faraco, Sauli Lindberg, and L\'{a}szl\'{o} Sz\'{e}kelyhidi.
		\newblock Rigorous results on conserved and dissipated quantities in ideal
		{MHD} turbulence.
		\newblock {\em Geophys. Astrophys. Fluid Dyn.}, 116(4):237--260, 2022.
		
		\bibitem{Faraco-Lindberg-Szekelyhidi_2021}
		Daniel Faraco, Sauli Lindberg, and L\'{a}szl\'{o} Sz\'{e}kelyhidi, Jr.
		\newblock Bounded solutions of ideal {MHD} with compact support in space-time.
		\newblock {\em Arch. Ration. Mech. Anal.}, 239(1):51--93, 2021.
		
		\bibitem{Faraco-Lindberg-Szekelyhidi_2024}
		Daniel Faraco, Sauli Lindberg, and L\'{a}szl\'{o} Sz\'{e}kelyhidi, Jr.
		\newblock Magnetic helicity, weak solutions and relaxation of ideal {MHD}.
		\newblock {\em Comm. Pure Appl. Math.}, 77(4):2387--2412, 2024.
		
		\bibitem{Fefferman-McCormick-Robinson-Rodrigo_2014}
		Charles~L. Fefferman, David~S. McCormick, James~C. Robinson, and Jose~L.
		Rodrigo.
		\newblock Higher order commutator estimates and local existence for the
		non-resistive {MHD} equations and related models.
		\newblock {\em J. Funct. Anal.}, 267(4):1035--1056, 2014.
		
		\bibitem{Fefferman-McCormick-Robinson-Rodrigo_2017}
		Charles~L. Fefferman, David~S. McCormick, James~C. Robinson, and Jose~L.
		Rodrigo.
		\newblock Local existence for the non-resistive {MHD} equations in nearly
		optimal {S}obolev spaces.
		\newblock {\em Arch. Ration. Mech. Anal.}, 223(2):677--691, 2017.
		
		\bibitem{Germain-Ibrahim-Masmoudi_2014}
		Pierre Germain, Slim Ibrahim, and Nader Masmoudi.
		\newblock Well-posedness of the {N}avier-{S}tokes-{M}axwell equations.
		\newblock {\em Proc. Roy. Soc. Edinburgh Sect. A}, 144(1):71--86, 2014.
		
		\bibitem{Giga-Ibrahim-Shen-Yoneda_2018}
		Yoshikazu Giga, Slim Ibrahim, Shengyi Shen, and Tsuyoshi Yoneda.
		\newblock Global well posedness for a two-fluid model.
		\newblock {\em Differential Integral Equations}, 31(3-4):187--214, 2018.
		
		\bibitem{Giga-Yoshida_1984}
		Yoshikazu Giga and Zensho Yoshida.
		\newblock On the equations of the two-component theory in magnetohydrodynamics.
		\newblock {\em Comm. Partial Differential Equations}, 9(6):503--522, 1984.
		
		\bibitem{Grafakos-Oh_2014}
		Loukas Grafakos and Seungly Oh.
		\newblock The {K}ato-{P}once inequality.
		\newblock {\em Comm. Partial Differential Equations}, 39(6):1128--1157, 2014.
		
		\bibitem{Ibrahim-Keraani_2012}
		Slim Ibrahim and Sahbi Keraani.
		\newblock Global small solutions for the {N}avier-{S}tokes-{M}axwell system.
		\newblock {\em SIAM J. Math. Anal.}, 43(5):2275--2295, 2011.
		
		\bibitem{Ibrahim-Lemarie-Rieusset-Masmoudi_2018}
		Slim Ibrahim, Pierre-Gilles Lemari\'{e}-Rieusset, and Nader Masmoudi.
		\newblock Time-periodic forcing and asymptotic stability for the
		{N}avier-{S}tokes-{M}axwell equations.
		\newblock {\em Comm. Pure Appl. Math.}, 71(1):51--89, 2018.
		
		\bibitem{Ibrahim-Yoneda_2012}
		Slim Ibrahim and Tsuyoshi Yoneda.
		\newblock Local solvability and loss of smoothness of the
		{N}avier-{S}tokes-{M}axwell equations with large initial data.
		\newblock {\em J. Math. Anal. Appl.}, 396(2):555--561, 2012.
		
		\bibitem{Jang-Masmoudi_2012}
		Juhi Jang and Nader Masmoudi.
		\newblock Derivation of {O}hm's law from the kinetic equations.
		\newblock {\em SIAM J. Math. Anal.}, 44(5):3649--3669, 2012.
		
		\bibitem{Jeong-Oh_2022}
		In-Jee Jeong and Sung-Jin Oh.
		\newblock On the {C}auchy problem for the {H}all and electron
		magnetohydrodynamic equations without resistivity {I}: {I}llposedness near
		degenerate stationary solutions.
		\newblock {\em Ann. PDE}, 8(2):Paper No. 15, 106, 2022.
		
		\bibitem{Jeong-Oh_2024}
		In-Jee Jeong and Sung-Jin Oh.
		\newblock On illposedness of the hall and electron magnetohydrodynamic
		equations without resistivity on the whole space, arXiv:2404.13790, 2024.
		
		\bibitem{Jiang-Luo-Tang_2020}
		Ning Jiang, Yi-Long Luo, and Shaojun Tang.
		\newblock Convergence from two-fluid incompressible {N}avier-{S}tokes-{M}axwell
		system with {O}hm's law to solenoidal {O}hm's law: classical solutions.
		\newblock {\em J. Differential Equations}, 269(1):349--376, 2020.
		
		\bibitem{Kang-Lee_2007}
		Ensil Kang and Jihoon Lee.
		\newblock Remarks on the magnetic helicity and energy conservation for ideal
		magneto-hydrodynamics.
		\newblock {\em Nonlinearity}, 20(11):2681--2689, 2007.
		
		\bibitem{Kang-Lee_2013}
		Ensil Kang and Jihoon Lee.
		\newblock Notes on the global well-posedness for the
		{M}axwell-{N}avier-{S}tokes system.
		\newblock {\em Abstr. Appl. Anal.}, pages Art. ID 402793, 6, 2013.
		
		\bibitem{KLN_2024}
		Kyungkeun Kang, Jihoon Lee, and Dinh~Duong Nguyen.
		\newblock Global well-posedness and stability of the 2{D} {B}oussinesq
		equations with partial dissipation near a hydrostatic equilibrium.
		\newblock {\em J. Differential Equations}, 393:1--57, 2024.
		
		\bibitem{KLN_2024_2}
		Kyungkeun Kang, Jihoon Lee, and Dinh~Duong Nguyen.
		\newblock Well-posedness, magnetic helicity conservation, inviscid limit and
		asymptotic stability for the generalized {N}avier-{S}tokes-{M}axwell
		equations, arXiv:2401.14839, 2024.
		
		\bibitem{KLN_2024_4}
		Kyungkeun Kang, Jihoon Lee, and Dinh~Duong Nguyen.
		\newblock Local well-posedness in critical {S}obolev spaces and lower bounds of
		blow-up solutions for the {H}all-{MHD} equations, in preparation, 2024.
		
		\bibitem{Lai-Wu-Zhong_2021}
		Suhua Lai, Jiahong Wu, and Yueyuan Zhong.
		\newblock Stability and large-time behavior of the 2{D} {B}oussinesq equations
		with partial dissipation.
		\newblock {\em J. Differential Equations}, 271:764--796, 2021.
		
		\bibitem{Leray_1933}
		Jean Leray.
		\newblock \'{E}tude de diverses \'{e}quations int\'{e}grales non lin\'{e}aires
		et de quelques probl\`emes que pose l'hydrodynamique.
		\newblock {\em J. Math. Pures Appl.}, 9(12):1--82, 1933.
		
		\bibitem{Leray_1934}
		Jean Leray.
		\newblock Sur le mouvement d'un liquide visqueux emplissant l'espace.
		\newblock {\em Acta Math.}, 63(1):193--248, 1934.
		
		\bibitem{Li_2019}
		Dong Li.
		\newblock On {K}ato-{P}once and fractional {L}eibniz.
		\newblock {\em Rev. Mat. Iberoam.}, 35(1):23--100, 2019.
		
		\bibitem{Lighthill_1960}
		Michael~J. Lighthill.
		\newblock Studies on magneto-hydrodynamic waves and other anisotropic wave
		motions.
		\newblock {\em Philos. Trans. Roy. Soc. London Ser. A}, 252:397--430, 1960.
		
		\bibitem{Liu-Tan_2021}
		Lvqiao Liu and Jin Tan.
		\newblock Global well-posedness for the {H}all-magnetohydrodynamics system in
		larger critical {B}esov spaces.
		\newblock {\em J. Differential Equations}, 274:382--413, 2021.
		
		\bibitem{Majda_Bertozzi_2002}
		Andrew~J. Majda and Andrea~L. Bertozzi.
		\newblock {\em Vorticity and incompressible flow}, volume~27 of {\em Cambridge
			Texts in Applied Mathematics}.
		\newblock Cambridge University Press, Cambridge, 2002.
		
		\bibitem{Masmoudi_2010}
		Nader Masmoudi.
		\newblock Global well posedness for the {M}axwell-{N}avier-{S}tokes system in
		2{D}.
		\newblock {\em J. Math. Pures Appl. (9)}, 93(6):559--571, 2010.
		
		\bibitem{Peng-Wang-Xu_2022}
		Yi~Peng, Huaqiao Wang, and Qiuju Xu.
		\newblock Derivation of the {H}all-{MHD} equations from the
		{N}avier-{S}tokes-{M}axwell equations.
		\newblock {\em J. Nonlinear Sci.}, 32(6):Paper No. 90, 27, 2022.
		
		\bibitem{Stein_1970}
		Elias~M. Stein.
		\newblock {\em Singular integrals and differentiability properties of
			functions}, volume No. 30 of {\em Princeton Mathematical Series}.
		\newblock Princeton University Press, Princeton, NJ, 1970.
		
		\bibitem{Tan_2022}
		Jin Tan.
		\newblock {New energy functionals for the incompressible Hall-MHD system}.
		\newblock Preprint hal-03855638, 2022.
		
		\bibitem{Temam_2001}
		Roger Temam.
		\newblock {\em Navier-{S}tokes equations}.
		\newblock AMS Chelsea Publishing, Providence, RI, 2001.
		\newblock Theory and numerical analysis, Reprint of the 1984 edition.
		
		\bibitem{Wan_2015}
		Renhui Wan.
		\newblock Global regularity for generalized {H}all magneto-hydrodynamics
		systems.
		\newblock {\em Electron. J. Differential Equations}, (179):1--18, 2015.
		
		\bibitem{Wan-Zhou_2015}
		Renhui Wan and Yong Zhou.
		\newblock On global existence, energy decay and blow-up criteria for the
		{H}all-{MHD} system.
		\newblock {\em J. Differential Equations}, 259(11):5982--6008, 2015.
		
		\bibitem{Wan-Zhou_2019}
		Renhui Wan and Yong Zhou.
		\newblock Global well-posedness, {BKM} blow-up criteria and zero {$h$} limit
		for the 3{D} incompressible {H}all-{MHD} equations.
		\newblock {\em J. Differential Equations}, 267(6):3724--3747, 2019.
		
		\bibitem{Wan-Zhou_20191}
		Renhui Wan and Yong Zhou.
		\newblock Global well-posedness for the 3{D} incompressible
		{H}all-magnetohydrodynamic equations with {F}ujita-{K}ato type initial data.
		\newblock {\em J. Math. Fluid Mech.}, 21(1):Paper No. 5, 16, 2019.
		
		\bibitem{Wen-Ye_2020}
		Zhihong Wen and Zhuan Ye.
		\newblock Regularity results for the {N}avier-{S}tokes-{M}axwell system.
		\newblock {\em Commun. Math. Sci.}, 18(2):339--358, 2020.
		
		\bibitem{Ye_2022}
		Zhuan Ye.
		\newblock Well-posedness results for the 3{D} incompressible {H}all-{MHD}
		equations.
		\newblock {\em J. Differential Equations}, 321:130--216, 2022.
		
		\bibitem{Yue-Zhong_2016}
		Gaocheng Yue and Chengkui Zhong.
		\newblock On the global well-posedness to the 3-{D} {N}avier-{S}tokes-{M}axwell
		system.
		\newblock {\em Discrete Contin. Dyn. Syst.}, 36(10):5817--5835, 2016.
		
		\bibitem{Zhang_2019}
		Huali Zhang.
		\newblock Global large smooth solutions for 3-{D} {H}all-magnetohydrodynamics.
		\newblock {\em Discrete Contin. Dyn. Syst.}, 39(11):6669--6682, 2019.
		
		\bibitem{Zhang_2023}
		Shunhang Zhang.
		\newblock Well-posedness for the incompressible {H}all-{MHD} system with
		initial magnetic field belonging to {$H^{\frac{3}{2}}({\Bbb R}^3)$}.
		\newblock {\em J. Math. Fluid Mech.}, 25(1):Paper No. 20, 23, 2023.
		
	\end{thebibliography}

	\end{document}